\newcommand{\Sp}{\mathbb{S}}
\newcommand{\To}{\mathbb{T}}
\newtheorem{theo}{Theorem}[section]
\newtheorem{defi}[theo]{Definition}
\newtheorem{lemm}[theo]{Lemma}
\newtheorem{pro}[theo]{Proposition}
\newtheorem{exa}[theo]{Example}
\newtheorem{rem}[theo]{Remark}
\newtheorem{col}[theo]{Corollary}
\numberwithin{equation}{section}
\title{ECH capacities of concave singular toric domains}
\author{Jonathan Trejos}
\date{}
\begin{document}

\maketitle
\begin{abstract}
    By definition, a toric domain has a boundary contact manifold diffeomorphic to a three dimensional sphere. In the present work we extend the definition of the toric domains in dimension four so that it admits a contact manifold diffeomorphic to a lens space $L(n,1)$. We call them `singular toric domains' since they naturally have an orbifold point. We calculate the ECH capacities for a specific subfamily of these singular toric domains that generalize the concave toric domains. Interestingly, even though we are calculating the capacities of an orbifold, the result can be adapted to study embedding problems in some of the desingularizations of these spaces, for example the unitary cotangent bundle of $\mathbb{S}^2$ or the unitary cotangent bundle of $\mathbb{R}P^2$.
\end{abstract}

\setcounter{tocdepth}{2}
\tableofcontents

\section{Introduction}
Let $(X,\omega)$ be a symplectic four-manifold, possibly with boundary or corners, non-compact, maybe disconnected. Its $\text{ECH}$ capacities are a sequence of real numbers 
\begin{equation}
    0=c_0(X,\omega)\leq c_1(X,\omega)\leq c_2(X,\omega)\leq \cdots\leq \infty
\end{equation}
The $\text{ECH}$ capacities were introduced in \cite{hutchings2011quantitative}, see also \cite{hutchings2014lecture}. We give more detail about these definitions in Section \ref{ECHfoundations}.

The following are elementary properties of the ECH capacities:
\begin{enumerate}
    \item \textit{(Monotonicity)}  If there exists a symplectic embedding $(X,\omega)\xhookrightarrow{s} (X',\omega')$, then $c_k(X,\omega)\leq c_k(X',\omega')$ for all $k$.
    \item \textit{(Conformality)} If $r>0$ then
    $$c_k(X,r\omega)=r c_k(X,\omega)$$
    \item \textit{(Disjoint union)}
    $$c_k\left(\coprod_{i=i}^n (X_i,\omega_i)\right)=\max_{k_1+\cdots+k_n=k}\displaystyle\sum_{i=1}^n c_{k_i}(X_i,\omega_i)$$
    \item \textit{(Ellipsoid)} If $a,b>0$, define the ellipsoid
    $$E(a,b)=\left\{(z_1,z_2)\in \mathbb{C}^2:\displaystyle\frac{\pi |z_1|^2}{a}+\displaystyle\frac{\pi |z_2|^2}{b}\leq 1\right\}$$
    Then $c_k(E(a,b))=N(a,b)_k$, where $N(a,b)$ denotes the sequence of all nonnegative integer linear combinations of $a$ and $b$, arranged in nondecreasing order, indexed starting at $k=0$.
\end{enumerate}

A proof of these properties can be found in \cite{hutchings2014lecture}.

The computation of the $\text{ECH}$ capacities is not an easy task but several result have been obtained. An interesting family of symplectic four-manifolds for which in several conditions it has been posible to compute the ECH capacities are the \textit{toric domains}. If $\Omega$ is a domain in the first quadrant of the plane, define the toric domain $X_\Omega$ as the set 
$$X_\Omega=\{z\in \mathbb{C}^2:\pi (|z_1|^2,|z_2|^2)\in \Omega\}$$
This sets is naturally a symplectic manifold since it is an open set in $\mathbb{C}^2$. For example, if $\Omega$ the triangle with vertices $(0,0), (a,0)$ and $(0,b)$, then $X_\Omega$ is the ellipsoid $E(a,b)$. 

The $\text{ECH}$ capacities of toric domains $X_\Omega$, when $\Omega$ is convex and does not touch the axes were computed in \cite{hutchings2011quantitative} theorem 1.11. The cases in which the region $\Omega$ touch the axis have received considerably more attention and they have special names:

\begin{defi}
    A \textit{convex toric domain} is a toric domain $X_\Omega$, where $\Omega$ is a closed region in the first quadrant bounded by the axes and a convex curve from $(a,0)$ to $(0,b)$, for $a$ and $b$ positive numbers. Similarly a concave domain $X_\Omega$ is a toric domain where $\Omega$ is a closed region in the first quadrant bounded by the axes and a concave curve from $(a,0)$ to $(0,b)$, for $a$ and $b$ positive number. 
\end{defi}

The $\text{ECH}$ capacties of concave domains were calculated \cite{Choi_2014}, as well as the capacities of the convex domains \cite{cristofaro2019symplectic}. In the present work we aim to generalize the notion of concave toric domains to consider symplectic manifolds with boundary a contact manifold diffeomorphic to a lens space. For these new concave domains, we compute combinatorial expressions to their $\text{ECH}$ capacities (see Theorem \ref{thm:concavecapacities}). To properly define this generalization of concave toric domains we define (Section \ref{symplectictoricorb}) an orbifold with one singularity point that plays the role of $\mathbb{C}^2$ in the definition of concave domains. This singularity point can be removed in several ways. One interesting way to remove the singularity point is using the techniques for almost toric fibration introduced by Symington \cite{symington2002dimensions}. With the use of these techniques it is possible to recover as concave domains well known spaces as the unit cotangent bundle of $\Sp^2$ as well as the unit cotangent bundle of $\mathbb{R}P^2$ (see Examples \ref{S2} and \ref{RP2}) which recently some interesting properties where found by Ferreira and Ramos \cite{Ferreira_2022}.


\subsection{Symplectic Toric Orbifolds}
\label{symplectictoricorb}

Let $n$ be a positive integer, in this section we  define a symplectic orbifold $M_n$ that we use as an ambient space to our definitions of a toric domain with singularities. 

\subsubsection{Construction of the toric orbifold $M_n$.}
    On $\mathbb{R}_{> 0}^2\times \To^2$ with coordinates $(r_1,r_2,\theta_1, \theta_2)$ lets consider the standart symplectic form:
    \begin{equation}
    \label{eq:standart-form}
    \omega_{std}=\frac{1}{2\pi}\left[\dd r_1 \wedge\dd \theta_1+\dd r_2 \wedge\dd \theta_2\right]
    \end{equation}    
    Let's also consider the cone $V_{n}$ with coordinates $r_1,r_2$, define as 
    $$V_n=\{r_1(n,1)+r_2(0,1):r_1,r_2\geq 0\}$$
    We call the rays $\{r_1(n,1):r_1\geq 0\}$ and $\{r_2(0,1):r_1\geq 0\}$ the axis of $V_n$. Consider $V_n\times \To^2$ where $\To^2=(\mathbb{R}/(2\pi\mathbb{Z}))^2$ with coordinates $(\theta_1,\theta_2)$. This is a symplectic manifold with the symplectic form \ref{eq:standart-form} restricted to it . 

       We define $M_n$ as a quotient of $V_{n}\times \To^2$. Consider the equivalence relationship on $V_{n}\times \To^2$ as follows
    \begin{itemize}
    \item For $\{r_1(n,1):r_1\geq 0\}\times \To^2$ for a fixed $r_1$ we colapses the orbits $\partial_{t_1}$.
    \item For $\{r_2(0,1):r_2\geq 0\}\times \To^2$ for a fixed $y$ we colapses the orbits $-\partial_{t_1}+n\partial_{t_2}$.
\end{itemize}
    Then we define $M_n=(V_n\times \To^2)/\sim$.  Notice that we have a canonical map $\pi_n:M_n\rightarrow V_n$. 

    \begin{lemm}
    $M_n$ is a symplectic toric orbifold with a canonical projection map $\mu_n=2\pi\, \pi_n$.
\end{lemm}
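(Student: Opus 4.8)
The plan is to verify that $M_n$ carries the structure of a symplectic toric orbifold by exhibiting explicit orbifold charts near each stratum and checking that the torus action and symplectic form descend compatibly. First I would treat the interior: on $\mathrm{int}(V_n)\times\To^2$ nothing is collapsed, so this is an honest symplectic manifold with the form \eqref{eq:standart-form}, and the $\To^2$-action by translation in $(\theta_1,\theta_2)$ is free, Hamiltonian, with moment map $\mu_n = 2\pi\,\pi_n$ (the factor $2\pi$ absorbing the $1/2\pi$ in $\omega_{std}$ together with the convention $\To = \R/2\pi\Z$). The content is therefore entirely at the two boundary rays and the cone point.

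Next I would build the chart along each axis ray. Along the ray $\{r_1(n,1)\}$ we collapse the $\partial_{\theta_1}$-circle, which is the standard way a boundary ray of a moment polytope arises; the model is a neighbourhood of a point in the symplectic cut / symplectic reduction picture. Concretely I would pass to coordinates adapted to this ray: since the primitive inward normal to the face $\{r_1(n,1)\}$ is determined by the lattice geometry of $V_n$, one reads off that near this ray $M_n$ looks like $\C \times (\text{open disk bundle})$ and is a smooth manifold (the edge $(n,1)$ together with $(0,1)$ — or rather the adjacent edge — spans a sublattice of index one here, so no orbifold singularity occurs along the edges themselves, only potentially at the vertex). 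For the ray $\{r_2(0,1)\}$ we collapse $-\partial_{\theta_1}+n\partial_{\theta_2}$; I would make the change of basis on $\To^2$ sending this vector to a coordinate vector, note that it is a lattice automorphism of $\Z^2$ only up to the relevant index, and thereby identify a neighbourhood of this ray. The interesting point is the cone point $0\in V_n$: there the two collapsing directions together with the cone $V_n$ produce the quotient singularity $\C^2/(\Z/n)$ with the weighted action $\zeta\cdot(z_1,z_2) = (\zeta z_1,\zeta z_2)$ (or $(\zeta z_1, \zeta^{-1}z_2)$ depending on orientation), i.e. the $A_{n-1}$-type cyclic quotient — this is precisely the orbifold point advertised in the introduction and the lens space $L(n,1)$ appears as the link. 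I would give the uniformizing chart $\C^2 \to \C^2/(\Z/n)$ explicitly and check the $\To^2$-action and the symplectic form lift to the standard ones on $\C^2$.

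Finally I would assemble these local models into a genuine symplectic orbifold atlas, checking that the transition maps on overlaps are the obvious ones (they are restrictions of the identity on the common open dense manifold part, so compatibility is automatic once each chart is a symplectomorphism onto its image), that the $\To^2$-action is well defined, effective and Hamiltonian on $M_n$ with moment map descending to $\mu_n = 2\pi\,\pi_n$, and that $\dim M_n = 4 = 2\cdot\dim\To^2$, which is the defining balance for a toric orbifold. I would also remark that $V_n$, being a rational polyhedral cone with respect to the lattice $\Z^2$, is a legitimate (non-compact) moment polytope in the Lerman–Tolman sense, so $M_n$ fits their classification of symplectic toric orbifolds.

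The main obstacle I expect is bookkeeping at the cone point: one must pin down the precise cyclic group, its weights, and the compatibility of the two distinct collapsing vectors $\partial_{\theta_1}$ and $-\partial_{\theta_1}+n\partial_{\theta_2}$ with a single uniformizing chart, getting all the lattice indices and orientation signs right so that the quotient is genuinely $\C^2/(\Z/n)$ and not some other cyclic quotient. Everything else — freeness and Hamiltonicity on the interior, smoothness along the edges, gluing — is routine once the normal-form computation at the vertex is done carefully.
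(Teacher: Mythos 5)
Your outline is sound and reaches the same conclusions as the paper (the only orbifold point is the vertex, the edges and interior are smooth, and the moment map is $2\pi\,\pi_n$), but it gets there by a genuinely different route. The paper does not build local uniformizing charts stratum by stratum; instead it observes that the whole quotient $(V_n\times\To^2)/\sim$ can be realized globally as a finite $\mathbb{Z}_n$ quotient of a symplectic manifold preserving $\omega_{std}$ (essentially $M_n\cong M_1/\mathbb{Z}_n$ with $M_1\cong\mathbb{C}^2$), and then invokes Lerman's construction of symplectic toric orbifolds to conclude in one stroke that $M_n$ is a symplectic orbifold with the origin as its unique singular point. Your chart-by-chart approach is more self-contained and makes the local models explicit, but it front-loads exactly the normal-form bookkeeping at the cone point that you defer to the end (pinning down the weights of the cyclic action and the lattice index $n$ of the sublattice spanned by $(1,0)$ and $(-1,n)$); the global-quotient approach sidesteps that entirely at the cost of leaning on the cited classification. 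For the moment map, the paper does an explicit computation that you only gesture at: it contracts $\omega_{std}$ with the two generating fields $X=\partial_{\theta_1}$ and $Y=-\partial_{\theta_1}+n\partial_{\theta_2}$, obtaining $\iota_X\omega = -\frac{1}{2\pi}\dd r_1$ and $\iota_Y\omega=\frac{1}{2\pi}(\dd r_1-n\dd r_2)$, and matches these against $-\dd\mu^X$ and $-\dd\mu^Y$ to identify $\mu_n=2\pi\,\pi_n$. You should carry out this two-line verification rather than just asserting that the $2\pi$ "absorbs" the $1/2\pi$ in the form, since the choice of generators $X,Y$ is precisely what ties the moment map to the cone $V_n$ and its two axes. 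With that computation included and the vertex normal form pinned down (the link being $L(n,1)$ as in Definition \ref{defi:lensspace}), your argument is complete.
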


\begin{proof}
    The construction of the quotient $M_n$ can be seen as an action of the group $\mathbb{Z}_n$ over $V_n\times \To^2$ that preserve the symplectic form. The only orbifold singularity is the origin. It follows that $M_n$ is a simplectic orbifold according to \cite[Sec.~3]{lerman1994symplectictoricorbifolds}.

    Let's check that $\mu_n$ is the moment map. For calculating the moment map we follows the notation of \cite[Sec.22]{CannasdaSilva2001Lectures} The relevant vector fields to consider are $X=\partial_{\theta_1}$ and $Y=-\partial_{\theta_1}+n\partial_{\theta_2}$. Notice that for this vector fields 
    \begin{align*}
        \iota_{X}(\dd r_1\wedge\dd \theta_1+\dd r_2\wedge\dd \theta_2)=-\dd r_1 \\
        \iota_{Y}(\dd r_1\wedge\dd \theta_1+\dd r_2\wedge\dd \theta_2)=\dd r_1-n\dd r_2
    \end{align*}
    Then $\mu_n^X(r_1,r_2,\theta_1,\theta_2)=(r_2,-r_1)\times (1,0)=r_1 $ and $\mu_n^X(r_1,r_2,\theta_1,\theta_2)=(r_2,-r_1)\times (-m,n)=-r_1+nr_2 $. That is 
    \begin{align*}
        -\dd r_1 = -\dd \mu^X(r_1,r_2,t_1,t_2) \\
        \dd r_1 -n\dd r_2= -\dd \mu^Y(r_1,r_2,t_1,t_2)
    \end{align*}
    From the above equation we conclude that $\mu_n$ is moment map. 
\end{proof}

    Notice that if $n=1$ then $M_1$ is symplectomorphic to $\mathbb{C}^2$.

    As it is pointed out in the above proof, $M_n$ has a unique orbifold point at the origin. 
    
\subsubsection{Toric Domains in $M_n$.}
With the definition of the ambient space $M_n$ in place we can define a toric domain in $M_n$.
\begin{defi}
    \label{domain}
    A domain $\Omega$ in $V_n$ is a bounded subset of $V_n$ for which there exists an embedded curve $a:[0,1]\rightarrow V_n$ with $a(0)=(a_0 n, 1)$ and $a(1)=(0,a_1 1)$  such that $\partial \Omega=a[0,1]\cup \{t_1(n,1):0\leq t_1\leq a_1 \}\} \cup\{t_2(0,1):0\leq t_1\leq a_2\}$. We also write $\partial^+\Omega=a[0,1]$.
\end{defi}

\begin{defi} Let $\Omega$ be a domain in $V_n$. We called the symplectic orbifold $X_\Omega:=\pi^{-1}(\Omega)$ a \textit{toric domain} in $M_n$. We say that $X_\Omega$ is a \textit{concave toric domain} if the complement of  $\Omega$ in $V_n$ is a convex set. Notice that $\partial X_\Omega=\mu_n^{-1}(\partial^+\Omega)$. In case no ambient space is mention we call $X_\Omega$ a singular toric domain. 
\end{defi}

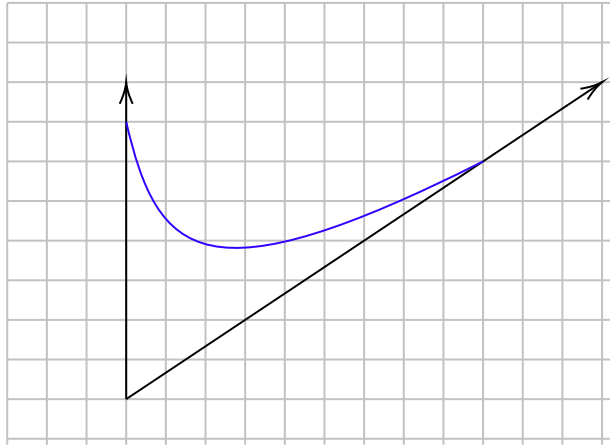
\begin{figure}
    \centering
    \tikzset{every picture/.style={line width=0.75pt}} 

\begin{tikzpicture}[x=0.75pt,y=0.75pt,yscale=-1,xscale=1]

\draw  [draw opacity=0] (166,4) -- (470,4) -- (470,227) -- (166,227) -- cycle ; \draw  [color={rgb, 255:red, 194; green, 194; blue, 194 }  ,draw opacity=1 ] (166,4) -- (166,227)(186,4) -- (186,227)(206,4) -- (206,227)(226,4) -- (226,227)(246,4) -- (246,227)(266,4) -- (266,227)(286,4) -- (286,227)(306,4) -- (306,227)(326,4) -- (326,227)(346,4) -- (346,227)(366,4) -- (366,227)(386,4) -- (386,227)(406,4) -- (406,227)(426,4) -- (426,227)(446,4) -- (446,227)(466,4) -- (466,227) ; \draw  [color={rgb, 255:red, 194; green, 194; blue, 194 }  ,draw opacity=1 ] (166,4) -- (470,4)(166,24) -- (470,24)(166,44) -- (470,44)(166,64) -- (470,64)(166,84) -- (470,84)(166,104) -- (470,104)(166,124) -- (470,124)(166,144) -- (470,144)(166,164) -- (470,164)(166,184) -- (470,184)(166,204) -- (470,204)(166,224) -- (470,224) ; \draw  [color={rgb, 255:red, 194; green, 194; blue, 194 }  ,draw opacity=1 ]  ;
\draw    (226,204) -- (226,46) ;
\draw [shift={(226,44)}, rotate = 90] [color={rgb, 255:red, 0; green, 0; blue, 0 }  ][line width=0.75]    (10.93,-3.29) .. controls (6.95,-1.4) and (3.31,-0.3) .. (0,0) .. controls (3.31,0.3) and (6.95,1.4) .. (10.93,3.29)   ;
\draw    (226,204) -- (464.34,45.11) ;
\draw [shift={(466,44)}, rotate = 146.31] [color={rgb, 255:red, 0; green, 0; blue, 0 }  ][line width=0.75]    (10.93,-3.29) .. controls (6.95,-1.4) and (3.31,-0.3) .. (0,0) .. controls (3.31,0.3) and (6.95,1.4) .. (10.93,3.29)   ;
\draw [color={rgb, 255:red, 50; green, 0; blue, 255 }  ,draw opacity=1 ]   (226,64) .. controls (242,133) and (265,155) .. (406,84) ;

\end{tikzpicture}
    \caption{Example of a Toric Concave Domain}
    \label{Toric Concave Domain}
\end{figure}

\begin{exa}[Singular Ellipsoids]  
    Let $a$ and $b$ be real positive numbers. Consider the domain $\Omega$ in $V_{n}$ defined as the convex hull of the vertices $(0,0), a(n,1)$ and $b(0,1)$. We denote $X_{\Omega}$ by $E_{n}(a,b)$ and we call it the \textit{ellipsoid with singularities} of periods $a$ and $b$. We define the ball with singularities as $B_{n}(a)\vcentcolon=E_{n}(a,b)$. 
\end{exa}

We can also consider examples that are not bounded.

\begin{exa} Given $a>0$ we define \textit{singular cylinders} as
\begin{align*}
    Z_n^L(a):=\{t_1(n,1)+t_2(0,1):t_1\leq a\},&& Z_n^R(a):=\{t_1(n,1)+t_2(0,1):t_2\leq a\}
\end{align*}
It is easy to see that $Z^L(a)$ and $Z^R(a)$ are symplectomorphic. We also define the \textit{singular non-disjoint union} of singular cylinders. 
$$Z_n(a,b):= Z^U(a) \cup Z^D(b) $$
For $a,b>0$.
\end{exa}

\begin{exa}
    Since $M_1$ is symplectomorphic to $\mathbb{C}^2$ a toric domain $X_\Omega$ in $M_1$ is just a standart toric domain. 
\end{exa}

\subsubsection{The boundary of a toric domains in  $M_n$.}
\label{subsec: ToricLensspaces}
Lets begin by formulating a definition of the lens space in a way that it is useful for our pourposes. 

\begin{defi}
\label{defi:lensspace}
    Consider $I\times \To^2$ with coordinates $x,t_1,t_2$ and oriented by $\{\partial_x, \partial_{t_1}, \partial_{t_2}\}$. We define the lens space $L(n,1)$ as the quotient $I\times \To^2/\sim$ where $\sim$ is colapsing the integral curves of the field $\partial_{t_1}$ in $\{0\}\times \To^2$ and the integral orbits of the field $-\partial_{t_1}+n\partial_{t_2}$ in $\{1\}\times \To^2$.
\end{defi}

This definition coincide with the given in \cite[Sec. 9]{symington2002dimensions}.

\begin{lemm}
\label{lem:exilens}
Let $a:[0,1]\rightarrow \bar{V}_n$ be a embedding such that $a(0)$ lies in the ray $\{t(n,1):t>0\}$ and $a(1)$ lies in the ray $\{t(0,1):t>0\}$ then
\begin{enumerate}[i]
    \item $Y_a\vcentcolon=\mu_n^{-1}(a([0,1]))$ is diffeomorphic to the lens space $L(n,1)$.
    \item Suppose that $a\times a'>0$ then $Y_a$ is a contact manifold with a $\To^2$-invariant contact form
    \begin{equation}
    \label{eq:contacsstructure}
    \lambda_a=a_1 \dd t_1+a_2 \dd t_2     
    \end{equation}
    where $a=(a_1,a_2)$ and $a\times a'=a_1a_2'-a_1 a_2'$.
\end{enumerate}
\end{lemm}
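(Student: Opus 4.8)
The plan is to prove the two assertions separately, handling the diffeomorphism type first and then the contact structure.

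\textbf{Step 1: Identifying $Y_a$ with $L(n,1)$.} I would start from the description of $M_n$ as $(V_n \times \To^2)/\sim$ and restrict the canonical projection $\mu_n = 2\pi\,\pi_n$ to $a([0,1])$. Since $a$ is an embedded curve in $\bar V_n$ starting on the ray $\{t(n,1):t>0\}$ and ending on the ray $\{t(0,1):t>0\}$, I can reparametrize and use $a$ itself as the interval coordinate, so that $\mu_n^{-1}(a([0,1]))$ is the quotient of $[0,1]\times \To^2$ by the relation that collapses the $\partial_{t_1}$-orbits over $a(0)$ (because $a(0)$ sits on the $(n,1)$-axis, where the construction of $M_n$ collapses $\partial_{t_1}$) and collapses the $(-\partial_{t_1}+n\partial_{t_2})$-orbits over $a(1)$ (because $a(1)$ sits on the $(0,1)$-axis). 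That is \emph{verbatim} the model for $L(n,1)$ in Definition \ref{defi:lensspace}, with $x$ the reparametrizing coordinate along $a$; the orientation conventions match since $\{\partial_x,\partial_{t_1},\partial_{t_2}\}$ corresponds to the chosen orientation. The only thing to check carefully is that the interior of the curve avoids the orbifold point at the origin, which it does because $a$ is an embedding with endpoints on the two \emph{open} rays, hence $Y_a$ is an honest (smooth) manifold and the identification is a diffeomorphism rather than merely an orbifold isomorphism.

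\textbf{Step 2: The contact form.} I would pull back $\lambda_a = a_1\,\dd t_1 + a_2\,\dd t_2$ (with $a=(a_1,a_2)$ and $a_i$ the components of the curve, so $\lambda_a$ depends on the point of $Y_a$ through the parameter along $a$) to $[0,1]\times\To^2$ and compute $\lambda_a \wedge \dd\lambda_a$. One has $\dd\lambda_a = a_1'\,\dd x\wedge\dd t_1 + a_2'\,\dd x\wedge\dd t_2$, and wedging gives $\lambda_a\wedge\dd\lambda_a = (a_1 a_2' - a_2 a_1')\,\dd t_1\wedge\dd x\wedge\dd t_2$ up to sign, i.e. $\pm(a\times a')\,\dd x\wedge\dd t_1\wedge\dd t_2$. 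The hypothesis $a\times a' > 0$ makes this nowhere zero, so $\lambda_a$ is a contact form on $[0,1]\times\To^2$. (I'd note in passing the evident typo in the statement: $a\times a'$ should read $a_1 a_2' - a_2 a_1'$, not $a_1 a_2' - a_1 a_2'$.)

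\textbf{Step 3: Descending to the quotient.} The remaining point — and the one I expect to be the only genuine subtlety — is that $\lambda_a$ descends to a smooth contact form on the quotient $Y_a$, i.e. that it behaves well where the $\To^2$-orbits collapse. Near $a(0)$, on the $(n,1)$-axis, the vanishing circle is the $\partial_{t_1}$-orbit; for $\lambda_a$ to extend smoothly over the collapsed locus one needs the coefficient of $\dd t_1$ to vanish there to first order in the radial direction, which is exactly the condition that $a(0)$ lies on the $(n,1)$-ray so that $a(0)$ is a positive multiple of $(n,1)$ — wait, rather the condition is on $a_1$: along the $(n,1)$-axis the collapsing is of $\partial_{t_1}$, and smoothness of $\lambda_a$ requires $\lambda_a(\partial_{t_1}) = a_1 \to 0$ appropriately; but $a(0)=(a_0 n,1)$-type points have $a_1\neq 0$ in general, so the correct statement is that one uses the right angular coordinates. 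Concretely I would introduce, near $a(0)$, coordinates in which $\partial_{t_1}$ is the collapsing field and a complementary circle coordinate survives, write $\lambda_a$ in the $\R^2\times S^1$ polar model, and verify the standard smooth-extension criterion (coefficient of $\dd(\text{angle})$ vanishing linearly in the radius, coefficient of radial/Cartesian directions smooth); and symmetrically near $a(1)$ with the field $-\partial_{t_1}+n\partial_{t_2}$, which is where the ``$1$'' in $L(n,1)$ enters. This is the analogue, in the singular/lens-space setting, of the classical check that $a_1\,\dd\theta_1+a_2\,\dd\theta_2$ is a smooth contact form on $S^3$ when the defining curve hits the axes transversally; I would either carry out this local computation or cite the corresponding fact from \cite{symington2002dimensions} and from the toric (convex/concave domain) literature, adapting the lattice vectors $(n,1)$ and $(0,1)$ in place of $(1,0)$, $(0,1)$.

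Assembling Steps 1--3 proves (i) and (ii). The essential content is purely the two local models at the ends of the curve; everything in the interior is a trivial product computation.
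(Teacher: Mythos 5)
Your Step 1 is essentially the paper's proof of (i): the paper simply observes that $\mu_n^{-1}(a([0,1]))$ is verbatim the quotient of Definition \ref{defi:lensspace}, and your elaboration (reparametrize by $a$, match the collapsed fields $\partial_{t_1}$ and $-\partial_{t_1}+n\partial_{t_2}$ at the two ends, note the curve avoids the orbifold point) is the right way to flesh that out. For (ii), however, you take a genuinely different route. The paper does not compute $\lambda_a\wedge \dd\lambda_a$ directly; it introduces the Liouville vector field $V=r_1\partial_{r_1}+r_2\partial_{r_2}$ of $\omega_{std}$, observes that the hypothesis on $a$ (your $a\times a'>0$) is exactly transversality of $V$ to the hypersurface $Y_a$, and then invokes the standard fact that $\iota_V\omega$ restricts to a contact form on any hypersurface transverse to a Liouville field; evaluating $\iota_V\omega$ along $a$ gives \eqref{eq:contacsstructure}. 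The payoff of the paper's approach is precisely the point you flag as the ``only genuine subtlety'' in your Step 3: since $\iota_V\omega$ is (the descent of) the standard smooth Liouville one-form of $\mathbb{C}^2$ written in action--angle coordinates, its restriction to the smooth submanifold $Y_a\subset M_n\setminus\{0\}$ is automatically smooth across the two collapsed circles, so no local polar-model extension check is needed. Your direct computation on $(0,1)\times\To^2$ is correct and gives the same nondegeneracy condition, but it obliges you to carry out (or cite) the smooth-extension criterion at both ends, which you only sketch --- and your Step 3 visibly stumbles on the coordinate conventions there (which component of $a$ multiplies $\dd t_1$, and hence which coefficient must vanish at which end). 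That confusion is resolvable --- in the coordinates adapted to the basis $\{(n,1),(0,1)\}$ the coefficient of the collapsing angular form does vanish at the corresponding endpoint --- but it is exactly the bookkeeping the Liouville-field argument lets you skip. So: both routes work; yours is more elementary and self-contained once the local models are actually written out, while the paper's is shorter and hides the boundary analysis inside a standard global statement.
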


\begin{proof}

$i.$ Notice that $\mu_n^{-1}(a([0,1]))$ is exactly the quotient described in the definition \ref{defi:lensspace}. 

$ii.$ Now consider the Liouville vector field
$$V=r_1\partial_{r_1}+r_2\partial_{r_2}$$
of the symplectic $2$-form \eqref{eq:standart-form}. The conditions over the function $a$ ensure us that $V$ and $Y_a$ are transversal. Then $\iota_V\omega$ is a contact structure equal to 
$$r_2\dd t_1+r_1 \dd t_2$$
Replacing $t_1$ and $t_2$ by $(a_1,a_2)$ give us the equation \eqref{eq:contacsstructure}. It is easy to check that this form is $\To^2$ invariant.
\end{proof}

\begin{defi}
A lens space $Y_a$ as in Lemma \ref{lem:exilens} is call \textit{toric lens space}. Futhermore, we say that $Y_a$ is a \textit{concave toric lens space} if $a'\times a''<0$
\end{defi}

We can easily see that the Reeb vector field $R_a$ and the contact structure $\xi_a$ of a toric lens space $Y_a$ are given by:
\begin{align}
\label{eq:Reebandstructure}
R=\displaystyle\frac{1}{a\times a'} \left( a'_2\partial_{t_1}-a'_1\partial_{t_2}\right), && \xi_a=\text{span}\{\partial_x, - a_2\partial_{t_1}+a_1\partial_{t_1}\}
\end{align}

Notice that if $X_\Omega$ is a toric domain in $M_n$. The boundary $\partial X_\Omega$ is a toric lens space diffeomorphic to $L(n,1)$. Futhermore, if $X_\Omega$ is concave then $\partial X_\Omega$ is a concave toric lens space. We can also write $\partial X_\Omega=Y_a$ where $a:[0,1]\rightarrow \mathbb{R}^2$ is the function describing $\partial^+ \Omega$.
\subsection{ECH Capacities of toric domains in $M_n$}

In this section we give a description of the ECH capacities of concave toric domains in $M_n$. We begin by describing the capacities of ellipsoids with singularities. We give the proofs of the results of this section in Section \ref{ECH-of-Lens-Spaces}.

\subsubsection{ECH capacities of Ellipsoids with singularities.}

Given two real positive numbers $a,b$ and two positive integer numbers $n,m$, we define the sequence $N^n(a,b)$ as the sequence of numbers of the form $a k_1+bk_2$ such that there exist an integer $l$ such that $k_1+ k_2=ln$, the sequence $N^n(a,b)$ is organized by increasing order with repetitions. We denote the $k$-th number of the sequence $N^n(a,b)$ by $N_k^n(a,b)$. So the ECH capacities of ellipsoids with singularity are given by the following Lemma:

\begin{lemm}
\label{lem:capacitiesellipsoids}
     The ECH capacities of an ellipsoid with singularities are given by the sequence defined above, i.e,

\begin{align}
\label{capLmn}
c_k(E_n(a,b))=N_k^n(a,b)    
\end{align}

where $a,b$ are positive real numbers and $n$ are positive integer numbers. 
\end{lemm}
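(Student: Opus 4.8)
The plan is to reduce the computation of $\ECH$ of the singular ellipsoid $E_n(a,b)$ to the known $\ECH$ of ordinary ellipsoids via the $\mathbb{Z}_n$-quotient structure established in Section \ref{symplectictoricorb}. Recall that $M_n = (V_n\times\To^2)/{\sim}$ arises from a symplectic $\mathbb{Z}_n$-action, and that $E_n(a,b)$ is the preimage under $\pi_n$ of the triangle with vertices $(0,0)$, $a(n,1)$, $b(0,1)$ inside the cone $V_n$. The first step is to exhibit an $n$-fold branched cover $E(na,b) \to E_n(a,b)$ (branched over the orbifold point), or dually to recognize $E_n(a,b)$ as the symplectic orbifold quotient $E(na,b)/\mathbb{Z}_n$ where $\mathbb{Z}_n$ acts on the first $\mathbb{C}$-factor by rotation. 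Concretely, under the change of coordinates $r_1 \mapsto n r_1$ (which rescales the first action variable), the cone $V_n$ straightens to the standard first quadrant and the lattice $\mathbb{Z}^2$ is replaced by the index-$n$ sublattice, which is exactly the data of the $\mathbb{Z}_n$-quotient. I would spell this out carefully since it is the conceptual heart of the argument.

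Next I would compute the $\ECH$ chain complex of the boundary $\partial E_n(a,b)$ directly, using the explicit Reeb dynamics from \eqref{eq:Reebandstructure}. After a small perturbation of the defining curve $a$ to make it strictly convex with two corners (so the contact form becomes nondegenerate near the two singular edges), the Reeb orbits are: two elliptic orbits $e_1, e_2$ (the images of the two edges of the triangle, sitting over the two singular circles of the lens space) together with their covers, and the Reeb flow has rotation numbers governed by $a$ and $b$ with the characteristic factor of $n$ coming from the lens-space identification. The grading and the index formula for $\ECH$ on a lens space (as in Hutchings–Taubes and the toric/lens computations) then show that the ECH generators are products $e_1^{k_1} e_2^{k_2}$ subject exactly to the constraint that $k_1 + k_2 \equiv 0 \pmod n$ — this is the congruence appearing in the definition of $N^n(a,b)$ — and the symplectic action of such a generator is $a k_1 + b k_2$. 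I would verify that the ECH differential vanishes for grading-parity reasons (all generators have even index, exactly as in the ellipsoid case), so that $\ECH_*(\partial E_n(a,b))$ is freely generated by these orbits and $U$ acts by shifting down the index-ordered list.

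From this the formula $c_k(E_n(a,b)) = N_k^n(a,b)$ follows by the definition of $\ECH$ capacities recalled (and elaborated in Section \ref{ECHfoundations}): $c_k$ is the minimal action of a class $\sigma$ with $U^k \sigma = $ (the generator of degree $0$), and since the complex is the free module on the orbit words $e_1^{k_1}e_2^{k_2}$ with $n \mid k_1+k_2$, ordered by action $ak_1+bk_2$, this minimum is precisely the $k$-th term of $N^n(a,b)$. The main obstacle I anticipate is the bookkeeping of the ECH index and the $U$-map on the lens space: one must confirm that no nontrivial differential or spurious generators appear and that the $\mathbb{Z}_n$-quotient does not introduce torsion or shift the grading in a way that breaks the clean correspondence with lattice points. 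I would handle this either by the branched-cover comparison with $E(na,b)$ (pushing the known answer $N(na,b)$ down and identifying its $\mathbb{Z}_n$-invariant part with $N^n(a,b)$, noting $a(k_1) + b k_2$ with $n\mid k_1 + k_2$ matches $\tfrac{1}{n}(na)k_1 + bk_2$ reindexed), or by a direct cobordism/ECH-of-lens-space computation, whichever is cleaner; most likely I would present the direct computation and use the branched cover as a consistency check.
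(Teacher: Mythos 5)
Your proposal follows essentially the same route as the paper: the paper also computes the ECH of the boundary lens space directly, showing the only Reeb orbits are the two elliptic orbits $e_+,e_-$, that admissible generators $e_+^r e_-^s$ must satisfy $r+s\equiv 0 \pmod n$ for homological reasons, that the ECH index is a bijection onto the even integers (so the differential vanishes and the $U$-map behaves as expected), and then reads off the capacities from the actions $ra+sb$, handling rational $a/b$ by continuity. Your branched-cover comparison with $E(na,b)$ is offered only as a consistency check, so the core argument coincides with the paper's.
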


We give a proof of Lemma \ref{lem:capacitiesellipsoids} in Section \ref{ECH-of-Lens-Spaces}.

The above calculations can be used to recover some results in \cite{Ferreira_2022}, we discuss this in Section \ref{sec:desingularization}.

\subsubsection{ECH capacities of concave toric domains on $M_n$}

One of our main result is a combinatorial description of the ECH capacities of concave toric domains on $M_n$. The precise statement is Theorem \ref{thm:concavecapacities} this stamente is similar to the one given in \cite[Sec.~ 1.6]{Choi_2014}.

\begin{defi}
    \label{integral-paths}
    An \textit{integral path} in $V_n$ is a piecewise linear continuos path $\Lambda$  with starting point a lattice point in the line $\{t(n,1):t\geq 0\}$ and end point at a lattice point in the $y$-axis. We say the integral path $\Lambda$ is \textit{concave} if it lies above any of the tangent lines at its smooth points. 
\end{defi}

\begin{defi}
\label{Counting points}
    If $\Lambda$ is a concave integral path in $V_n$, define $\mathcal{L}_{n}(\Lambda)$ to be the number of lattice points in the region bounded by $\Lambda$ and the axis of $V_n$. Without counting the lattice points in $\Lambda$.
\end{defi}

\begin{defi}
\label{def:omegalength}
        Let $X_\Omega$ be the concave toric domain in $M_n$. Suppose that $\Lambda$ is a concave integral path in $V_n$, define the $\Omega$-\textbf{length} of $\Lambda$, as follows. For each edge $v$ let $p_v$ be a point in $\partial^+\Omega$ such that $\Omega$ is contained in the closed half-plane above the line through $p_v$ parallel to $v$. Then 
    $$l_{\Omega}(\Lambda)=\displaystyle\sum_{v\in \textit{Edges}(\Lambda)} v\times p_v$$
    Here $\times$ denote the cross product. Note that if $p_v$ is not unique then the value $v\times p_v$ does not depend on the choice of $p_v$. 
\end{defi}

\begin{exa}
    \label{exa:ellipsoidaction}
    Consider $X_\Omega=E_n(a,b)$ and  let $\Lambda$ a concave integral path. Let $L$ be the line that is tangent to $\Lambda$ at $(x_0,y_0)$ and leaves $\Lambda$ in the unbounded part $V_n$. Then  
    $$l_\Omega(\Lambda)=a t_1+b t_2$$
    where $t_1$ is the horizontal distant from the $y$-axis to the point $(x_0,y_0)$ and $t_2$ is the horizontal distant from $(x_0,y_0)$ and the $(n,1)$-axis. 
\end{exa}

\begin{theo}
    \label{thm:concavecapacities}
    If $X_\Omega$ is any concave toric domain of $M_n$, then its $ECH$ capacities are given by 
    $$c_k(X_\Omega)=\max\{l_\Omega(\Lambda):\mathcal{L}_n(\Lambda)=k\} $$
Here the maximum is over concave integral paths $\Lambda$ on $V_n$.
\end{theo}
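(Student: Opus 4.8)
The plan is to reduce the computation to the known concave case (Choi--Hutchings--Kruglikov--Ramos, \cite{Choi_2014}) and to the ellipsoid computation of Lemma \ref{lem:capacitiesellipsoids} via two standard ECH moves: a ``lift'' to an $n$-fold cover/desingularization picture and the exhaustion of $X_\Omega$ by unions of ellipsoids with singularities. First I would establish the lower bound. Given a concave integral path $\Lambda$ on $V_n$ with $\mathcal{L}_n(\Lambda)=k$, I want to find an ellipsoid with singularities $E_n(a,b)$ (or more generally a disjoint union of such, sitting inside $X_\Omega$) whose $k$-th capacity equals $l_\Omega(\Lambda)$. The key combinatorial observation is Example \ref{exa:ellipsoidaction}: for a single ellipsoid, $l_\Omega(\Lambda)$ is an explicit affine function of the ``tangency data'' of $\Lambda$, and the constraint $\mathcal{L}_n(\Lambda)=k$ matches exactly the constraint $k_1+k_2 \equiv 0 \pmod n$ defining the sequence $N^n(a,b)$. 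So I would show: (i) a concave path $\Lambda$ consisting of a single edge tangent to $\partial^+\Omega$ picks out a singular ellipsoid $E_n(a,b)$ inscribed in $X_\Omega$ (using concavity of $V_n\setminus\Omega$); (ii) counting interior lattice points $\mathcal{L}_n$ under that edge reproduces the index/grading count that gives $N_k^n(a,b)$; hence by monotonicity $c_k(X_\Omega)\ge c_k(E_n(a,b)) = N_k^n(a,b) = l_\Omega(\Lambda)$. For paths with several edges one uses the disjoint-union axiom together with a decomposition of $\Lambda$ into pieces, each inscribed as a singular ellipsoid, mirroring the concave-domain argument.

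For the upper bound I would use the ECH-theoretic description of the capacities directly: $c_k(X_\Omega)$ is the minimal symplectic action $L$ such that the ECH of $\partial X_\Omega = Y_a$ in the relevant filtered subspace is nonzero in degree $2k$ (after the usual completion/cobordism setup of Section \ref{ECHfoundations}). Because $Y_a$ is a concave toric lens space, its Reeb dynamics are completely explicit from \eqref{eq:Reebandstructure}: the Reeb orbits are the two ``short'' orbits over the corners of $\partial^+\Omega$ together with the $S^1$-families over the smooth locus, and the action of a Reeb orbit set corresponds to the $\Omega$-length of an associated integral path. I would then argue that orbit sets contributing to ECH (those surviving in homology / having the right ECH index) correspond precisely to concave integral paths $\Lambda$, with grading $2\mathcal{L}_n(\Lambda)$ — here the lattice-point count $\mathcal{L}_n$ plays the role of the ECH index, exactly as in the smooth toric case where the ECH index of a toric generator is twice the number of lattice points it encloses. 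Matching actions then gives $c_k(X_\Omega) \le \max\{ l_\Omega(\Lambda) : \mathcal{L}_n(\Lambda) = k\}$.

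The cleanest way to make both bounds rigorous simultaneously is probably to run the argument of \cite{Choi_2014} essentially verbatim, but over the cone $V_n$ instead of the first quadrant, and to check at each step that the only change is the replacement of the lattice $\Z^2$ (and its associated index formula) by the sublattice-type condition $k_1+k_2\equiv 0\pmod n$ that is built into $M_n$ as a $\Z_n$-quotient. Concretely: the $\Z_n$-action used in the Lemma defining $M_n$ lets one pull back the ECH of $Y_a$ to an equivariant computation upstairs, where $V_n$ unfolds to a genuine first-quadrant toric picture; the concave-domain theorem applies there, and descending the equivariant answer reintroduces the mod-$n$ constraint and turns the ordinary $\Omega$-length into $l_\Omega$ on $V_n$. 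I expect the main obstacle to be precisely this descent/equivariance bookkeeping: ensuring that the ECH index, the grading shift coming from the orbifold point at the origin, and the action filtration all transform correctly under the quotient, and that no extra generators are introduced by the singular point. A secondary technical point is handling $\partial^+\Omega$ that is not piecewise linear (general concave curves) — this is dealt with, as in the smooth case, by a $C^0$-approximation argument using monotonicity and conformality, sandwiching $X_\Omega$ between nearby piecewise-linear concave toric domains and passing to the limit, so it should not present a real difficulty once the piecewise-linear case is in hand.
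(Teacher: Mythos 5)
Your upper-bound sketch is in the same spirit as the paper: the paper perturbs $\lambda_a$ (a Morse--Bott perturbation plus an extra ``concave perturbation'' that pushes the Conley--Zehnder indices of the exceptional orbits $e_\pm$ above any fixed bound, so they cannot appear in low-index generators), proves in Proposition \ref{pro:equicomgeo} and Lemma \ref{explicitindex} that generators correspond to concave generators with $I(\Lambda)=2\mathcal{L}_n(\Lambda)+h(\Lambda)$, and then uses the corounding-the-corner operation (Corollary \ref{col:cor-red-action}) to reduce to elliptic generators before matching actions in Lemma \ref{lem:capupperbound}. You gloss over the exclusion of $e_\pm$ and the elimination of hyperbolic labels, but these are fillable technical steps, not conceptual gaps.

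The lower bound is where your proposal genuinely breaks. You propose to inscribe a singular ellipsoid for each edge of $\Lambda$ and invoke the disjoint-union axiom, but the disjoint-union axiom only applies after you have produced a symplectic embedding of a \emph{disjoint} union into $X_\Omega$, and the per-edge inscribed ellipsoids all overlap (each touches $\partial^+\Omega$ from inside and they share the region near the origin), so no such embedding exists. The argument of \cite{Choi_2014} that you say you are mirroring does not decompose $\Lambda$ edge-by-edge into ellipsoids; it embeds the \emph{weight-expansion} balls $B_n(w_0)\sqcup\coprod_i B(w_i)$ disjointly into $X_\Omega$ (the Traynor trick for the smooth balls, inclusion for the singular one), and then proves the purely combinatorial inequality $\max\{l_\Omega(\Lambda):\mathcal{L}_n(\Lambda)=k\}\leq c_k\bigl(B_n(w_0)\sqcup\coprod_i B(w_i)\bigr)$ by induction on the weight expansion (Section \ref{sec:com<packing}); the theorem then follows by chaining three inequalities. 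Your proposed ``cleanest way'' --- lifting to the $\Z_n$-cover of $M_n$, applying the smooth concave theorem upstairs, and descending equivariantly --- is also not available: there is no descent mechanism relating the ECH of a contact manifold to the ECH of its quotient by a finite group (ECH is not defined equivariantly in the required sense, and the index, grading and $U$-map do not transfer by any off-the-shelf result), which is precisely why the paper computes the ECH of $L(n,1)$ directly rather than pulling back to $\Sp^3$.
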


The proof of Theorem \ref{thm:concavecapacities} is given at the end of Section \ref{ECH-of-Lens-Spaces}. We call the right hand side of the equation \textit{the combintorial expression of the ECH capacities for concave toric domains.} In case there is no risk of ambiguity we will call it simple the combinatorial expression of the ECH capacities.

\subsection{Packings and ECH capacities}
\label{ballpacking}

In this section we want to describe an specific packing for the concave toric domains in $X_\Omega$, it turns out that for this specific packing, the ECH capacities of $X_\Omega$ and the packing coincide. 

\subsubsection{Weight Expansions}
\label{weightexpansions}

We describe the packing on family of concave toric domains that we call rational:

\begin{defi} Let $X_\Omega$ be a  concave toric domain in $M_n$. As in Definition \ref{domain} let $a:[0,1]\rightarrow V_n$ be the curve such that $\partial^+\Omega=a[0,1]$ curve such that  $a(0)=(a_0 n, a_0 )$, $a(1)=(0,a_1 1)$ and  $\partial \Omega=a[0,1]\cup \{t_1(n,1):0\leq t_1\leq a_1 \}\} \cup\{t_2(0,1):0\leq t_1\leq a_2\}$. We say that $X_\Omega$ is a \textit{rational toric domain} if $a$ is piecewise linear and $a'$ is rational whenever is defined.     
\end{defi}

Suppose that $X_\Omega$ is a rational concave toric domain in $M_n$. The \textit{weight expansion} of $\Omega$ is a finite unordered list of (possibly repeated) positive real numbers $w(\Omega)=\{a,a_1,\dots, a_n\}$ analogous to the weight expansion of a concave toric domain define in \cite[Sec.~ 1.3]{Choi_2014}. 

We define the weight expansion of a rational concave toric domain $X_\Omega$ in $M_n$ inductively as follows. If $\Omega$ is the triangle with vertices $(0,0),(na,a)$ and $(0,a_0)$ then $w(\Omega)=(a_0)$.  

Otherwise let $a>0$ be the largest real number such that the triangle $(0,0),(na,a)$ and $(0,a)$ is contained in $\Omega$. Call this triangle $\Omega_1$. The line $y=a$ intersect $\partial^+ \Omega$ in a line segment from $(x_2,a)$ to $(x_3,a)$ with $x_2\leq x_3$. Let $\Omega'_2$ denote the portion of $\Omega$ above the line $y=a$ and to the left of $x=x_2$. By applying the translation $(x,y)\rightarrow (x,y-a)$ to $\Omega'_2$ we obtain a $\Omega_2$ in $\mathbb{R}^2$. Let $\Omega'_3$ denote the portion of $\Omega$ above the line $y=a$ and to the right of the line $x=x_3$. By first applying the translation $(x,y)\rightarrow (x-a,y-a)$ and then multiplying by $\begin{pmatrix} 0 & 1 \\ -1 & n \end{pmatrix}\in \text{SL}_2(\mathbb{Z})$ we obtain another domain  $\Omega_3$ in $\mathbb{R}^2$. Lets consider the concave toric domains $X_{\Omega_2}$ and $X_{\Omega_3}$. We want to define the weight expansion of $\Omega$ as 
\begin{equation}
    \label{eq:weightexpantion}
w(\Omega)=w(\Omega_1)\cup w(\Omega_2)\cup w(\Omega_3)    
\end{equation}

We interpret the union $\cup$ as an union with repetitions. The sets $w(\Omega_2)$ and $w(\Omega_3)$ are the weight expansions defined for concave toric domains in $\mathbb{R}^4$ defined in \cite[Sec.~ 1.3]{Choi_2014}. We explain this sequence below. 

For simplicity, we only explain how to calculate the weight expansion of $X_{\Omega_2}$ becasue the definition for $X_{\Omega_1}$ is the same. If $X_{\Omega}=B(a_2)$ is a ball then $w(X_\Omega)=a_2$. Otherwise, let $a>0$ be the largest real number such that the triangle with vertices $(0,0),(a,0)$ and $(0,a)$ is contained in $\Omega$. Let $\Omega'_2$ be the triangle with vertices $(0,0),(a,0)$ and $(0,a)$. The line $x+y=a$ intersect $\partial \Omega_2$ in a line segments from $(x_4,a-x_4)$ to $(x_5,a-x_5)$ with $x_4\leq x_5$. Let $\Omega'_4$ be the portions of $\Omega_2$ that is above the line $x+y=a$ and to the left. Similarly, let $\Omega'_5$ be the portions of $\Omega_2$ that is above the line $x+y=a$ and to the right. By applying the translation $(x,y)\rightarrow (x,y-a)$ to $\Omega'_3$ and then multiplying by $\begin{pmatrix} 1 & 0 \\ 1 & 1 \end{pmatrix}$ we find a new domain $\Omega_4$. By first applying the translation $(x,y)\rightarrow (x-a,y)$ and then multiplying by $\begin{pmatrix} 1 & 1 \\ 0 & 1 \end{pmatrix}\in \text{SL}_2(\mathbb{Z})$ we obtain a new region $\Omega_5$. We now define 
$$w(\Omega_2)=w(\Omega'_2)\cup w(\Omega_4)\cup w(\Omega_5)$$
This gives a complete description of how to compute Equation \ref{eq:weightexpantion}.

We usually write the weight expansion $w(X_\Omega)$ of a concave toric domain in $M_n$ as a sequence $w(X_\Omega)=(a_0,a_1,a_2,\dots)$ in a non-decreasing way.

\begin{rem}
    If $X_\Omega$ is a rational concave toric domain, then the weight expansion is finite this leads to some small simplifications. However, this condition is not necessary if we admit a infinite weight sequence and use some limiting arguments. 
\end{rem}

\subsubsection{Packing Theorem}
Now we have the elements to describe the packing theorem.

\begin{theo}
\label{the:ballpacking}
Let $X_\Omega$ be a rational concave toric domain in $M_n$ with weight expansion $w(\Omega)=(a_0,a_1\dots,a_s)$ then 
$$c_k(X_\Omega)=c_k\left(B_{n}(a_0)\amalg \coprod_{k=1}^s B(a_k)\right)$$ 
\end{theo}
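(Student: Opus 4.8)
The plan is to deduce Theorem~\ref{the:ballpacking} from the combinatorial description of ECH capacities in Theorem~\ref{thm:concavecapacities} together with the capacities of ellipsoids (Lemma~\ref{lem:capacitiesellipsoids}), the ellipsoid capacity formula $c_k(E(a,b))=N(a,b)_k$, and the disjoint union axiom. The strategy mirrors the proof of the analogous packing statement for concave toric domains in $\mathbb{R}^4$ in \cite[Sec.~1.3--1.4]{Choi_2014}, with the extra bookkeeping needed for the singular piece $B_n(a_0)$. Concretely, I would proceed by induction on the number of steps in the weight expansion algorithm, i.e. on the length $s$ of $w(\Omega)$.

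\textbf{Base case.} If $\Omega$ is the triangle with vertices $(0,0),(na,a),(0,a_0)$, then $X_\Omega$ is, up to relabeling, an ellipsoid with singularities, and $w(\Omega)=(a_0)$ with no further balls; by Lemma~\ref{lem:capacitiesellipsoids} the two sides of the claimed identity agree. More precisely one checks $E_n(a,b)$ with the appropriate $a,b$ equals $B_n(a_0)$ up to the relevant identifications, so $c_k(X_\Omega)=N^n_k(\cdot)=c_k(B_n(a_0))$.

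\textbf{Inductive step: reduction of the combinatorial expression.} The key lemma I would isolate is a decomposition of the ``$\Omega$-length / lattice-count'' optimization problem defining $c_k(X_\Omega)$ in Theorem~\ref{thm:concavecapacities} that matches the geometric decomposition $\Omega = \Omega_1 \cup \Omega_2' \cup \Omega_3'$ from the weight-expansion construction. Given a concave integral path $\Lambda$ in $V_n$, one cuts it along the line $y=a$ (the top of the inscribed triangle $\Omega_1$), splitting $\Lambda$ into: a ``bottom'' part comparable with the triangle $\Omega_1$, a ``left-of-$x_2$'' part which, after the translation $(x,y)\mapsto(x,y-a)$, becomes a concave integral path for the $\mathbb{R}^4$-domain $X_{\Omega_2}$, and a ``right-of-$x_3$'' part which, after $(x,y)\mapsto(x-a,y-a)$ followed by $\begin{pmatrix}0&1\\-1&n\end{pmatrix}\in\mathrm{SL}_2(\mathbb{Z})$, becomes a concave integral path for $X_{\Omega_3}$. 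I would verify that (i) the $\Omega$-length is additive under this splitting, $l_\Omega(\Lambda)=l_{\Omega_1}(\Lambda_1)+l_{\Omega_2}(\Lambda_2)+l_{\Omega_3}(\Lambda_3)$ — this uses that the transformations above are in $\mathrm{SL}_2(\mathbb{Z})$ (so preserve the cross product) and translations shift the support functions compatibly; and (ii) the lattice-point count $\mathcal{L}_n$ is likewise additive up to the lattice points lying on the cut line, i.e. $\mathcal{L}_n(\Lambda)=\mathcal{L}_n(\Lambda_1)+\mathcal{L}(\Lambda_2)+\mathcal{L}(\Lambda_3)+(\text{correction terms along }y=a)$. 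Controlling this correction term is exactly the combinatorial heart of \cite{Choi_2014}; one shows the corrections are absorbed by passing from paths to their ``rounded'' versions, so that $\max\{l_\Omega(\Lambda):\mathcal{L}_n(\Lambda)=k\}$ equals $\max_{k_1+k_2+k_3=k}\big(\max l_{\Omega_1}+\max l_{\Omega_2}+\max l_{\Omega_3}\big)$.

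\textbf{Inductive step: matching with the disjoint union.} Theorem~\ref{thm:concavecapacities} applied to $X_{\Omega_1}=B_n(a_0)$, and \cite[Sec.~1.6]{Choi_2014} (or Theorem~\ref{thm:concavecapacities} with $n=1$) applied to $X_{\Omega_2},X_{\Omega_3}\subset\mathbb{R}^4$, together with the inductive hypothesis on $\Omega_2,\Omega_3$ (which have strictly shorter weight expansions), identify each $\max l$ above with $c_{k_i}$ of the corresponding ball-packing; then the disjoint-union axiom $c_k(\coprod X_i)=\max_{\sum k_i=k}\sum c_{k_i}(X_i)$ repackages $\max_{k_1+k_2+k_3=k}\sum_i c_{k_i}$ as $c_k$ of the union of all the balls appearing in $w(\Omega_1)\cup w(\Omega_2)\cup w(\Omega_3)=w(\Omega)$. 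This closes the induction. I expect the main obstacle to be step (ii): proving that the lattice-point bookkeeping along the cut line $y=a$ is exactly compatible with the additivity of $\mathcal{L}_n$ under the two non-trivial $\mathrm{SL}_2(\mathbb{Z})$-changes of coordinates, especially near the orbifold ray $\{t(n,1)\}$ where the lattice geometry differs from the $\mathbb{R}^4$ case; this requires a careful ``concave path surgery'' argument showing the extremal paths can be taken to bend precisely along $y=a$ without losing lattice points, so that no inequality in the reduction is strict. The rational hypothesis guarantees the weight expansion terminates, making the induction well-founded; the remark after Theorem~\ref{the:ballpacking}'s setup indicates the irrational case follows by a limiting argument using continuity of both sides in $\Omega$.
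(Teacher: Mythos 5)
Your overall architecture differs from the paper's and, as written, has a genuine gap at its central step. The paper does not derive Theorem \ref{the:ballpacking} from Theorem \ref{thm:concavecapacities}; it proves three inequalities that close into a cycle: (A) $c_k\left(B_n(a_0)\amalg\coprod_i B(a_i)\right)\leq c_k(X_\Omega)$, obtained from an explicit symplectic embedding of the disjoint balls into $X_\Omega$ (the singular ball by inclusion, the others by the Traynor trick as in \cite[Lem.~1.7]{Choi_2014}) together with monotonicity of ECH capacities; (B) $\max\{l_\Omega(\Lambda):\mathcal{L}_n(\Lambda)=k\}\leq c_k\left(B_n(a_0)\amalg\coprod_i B(a_i)\right)$, which is exactly the one-directional splitting-plus-induction argument you describe (cut $\Lambda$ at the top of the inscribed triangle, transport the two remaining pieces by the translations and $\text{SL}_2(\mathbb{Z})$ maps, check additivity of $l_\Omega$ and of the lattice count, and invoke the disjoint union axiom); and (C) $c_k(X_\Omega)\leq\max\{l_\Omega(\Lambda):\mathcal{L}_n(\Lambda)=k\}$ from the ECH index computation (Lemma \ref{lem:capupperbound}). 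All three quantities are thereby sandwiched into equality, and both Theorem \ref{thm:concavecapacities} and Theorem \ref{the:ballpacking} follow simultaneously.

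Your plan instead requires the exact combinatorial identity $\max\{l_\Omega(\Lambda):\mathcal{L}_n(\Lambda)=k\}=\max_{k_1+k_2+k_3=k}\sum_i\max\{l_{\Omega_i}(\Lambda_i):\mathcal{L}(\Lambda_i)=k_i\}$, i.e.\ both inequalities. The ``$\leq$'' direction is inequality (B) and is sound. The ``$\geq$'' direction --- that optimal paths for the three pieces can be reassembled into a single concave integral path for $\Omega$ with the correct lattice count and total length --- is precisely what you flag as the ``main obstacle'' and never establish; it is not mere bookkeeping, since the concatenation of optimal sub-paths need not be concave across the cut, and repairing concavity by rounding changes both the length and the lattice count. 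Neither this paper nor \cite{Choi_2014} proves that direction combinatorially: both obtain it for free from (A) and (C). The concrete fix is to add the geometric ingredient you omitted --- the symplectic embedding $B_n(a_0)\sqcup\coprod_i B(a_i)\hookrightarrow X_\Omega$ and monotonicity --- which yields $c_k\left(B_n(a_0)\amalg\coprod_i B(a_i)\right)\leq c_k(X_\Omega)=\max\{l_\Omega(\Lambda):\mathcal{L}_n(\Lambda)=k\}$ and closes your argument with no path gluing. (There is also a latent circularity to watch: Theorem \ref{thm:concavecapacities} is itself proved in this paper from the same three inequalities, so taking it as input is only legitimate once those inequalities, including (A), are in place.)
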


Notice that by taking $n=1$ we recover \cite[Theo.~1.4]{Choi_2014}. Which is the standard ball packing theorem for concave domains.

\subsection{Desingularization of Singular Toric Domains}
\label{sec:desingularization}
In this section we explain an useful perspective related to the study of toric domains in $M_n$. As it was mention before, the manifold $M_n$ have a orbifold singularity in the origin. In the case we want to study obstructions for symplectic embeddings in $M_n$ using the ECH capacities describe by Theorem \ref{thm:concavecapacities} we might run into a technical problem. More specifically, suppose that $X_\Omega$ and $X_\Omega'$ are concave toric domain in $M_n$, suppose aditionally that $X_\Omega\hookrightarrow X_\Omega'$ symplectically, if there is not other condition over the embeddings it does not follows directly that $c_k(X_\Omega)\leq c_k(X_\Omega')$. This is due to the singularity at the origin. The monotonicity of the ECH capacities follows from the existence of the ECH cobordism map when the cobordism $X_\Omega'/X_\Omega$ is a weakly exact Liouville domain, in particular, a smooth cobordism, as it is explained in the proof of \cite[Lem.~1.14]{hutchings2014lecture}. Thefore, it is unknown if $c_k(X_\Omega)\leq c_k(X_\Omega')$ follows from $X_\Omega\xhookrightarrow{s} X_\Omega'$ in general for this kind of toric domains in general. 

There are several ways to deal with this technical problem. One of them is to assume that the symplectic embedding $X_\Omega\hookrightarrow X_\Omega'$ send the orbifold singularity to the orbifold singularity. Another interesting approach is to consider a desingularizations of the space $M_n$ in which Theorem \ref{thm:concavecapacities} and Theorem \ref{the:ballpacking} remains true or it is easy to modified. In this section we explore briefly two possible desingularizations, the first by \textit{rational blow up} and the second by using \textit{almost toric fibrations}.

\subsubsection{Rational blow up}
Lets consider our ambient space $M_n$ and a positive real number $\delta$. By removing the interior of the singular ball $B_n(\delta)$ and collapsing the orbits in the direction $\partial_{t_1}$ over $\partial B_n(\delta)$ we obtain a smooth symplectic manifold $M_n^\delta$. This process is call rational blow up and it is a particular case of a symplectic cut \cite{lerman1995symplectic}. A new moment map $\mu_n^\delta:M_n^\delta\rightarrow V_n^\delta$ where $V_n^\delta=\{r_1(n,1)+r_2(0,1)\in V_n:r_2\geq\delta\}$. 

Let $X_\Omega$ be a concave toric domain in $M_n$. Lets also suppose that after the rational blown-up the boundary $\partial X_\Omega$ in $M_n^\delta$ is not affected. Then after the rational blow up $X_\Omega$ transform into a new symplectic manifold $X_\Omega^\delta$. We would like to find a version of Theorem \ref{thm:concavecapacities} but for $X_\Omega^\delta$ in this new setting. Let denote $Y_a=\partial X_\Omega$ for some function $a$. Write $a^\delta=a-\delta \,(0,1)$ and notice that $\partial X^\delta_\Omega$ is a contact manifolds $Y^\delta_a:=\partial X^\delta_\Omega$. Notice that the dynamics of $Y_a$ and $Y^\delta_a$ are the same but the action of the orbit change. 

\begin{defi}
    For a concave polygonal path $\Lambda$ we define the \textit{a $\Omega^\delta-$lenght} as 
    $$l_\Omega^\delta(\Lambda)=l_\Omega(\Lambda)-\delta y(\Lambda)$$
    Where $y(\Lambda)$ is the horizontal displacement of $\Lambda$.
\end{defi}

The version of the Theorem \ref{thm:concavecapacities} for $X^\delta_\Omega$ is as follows:

\begin{theo}
    \label{thm:blowupconcavecapacities}
    Let $X_\Omega$ be a concave toric domain in $M_n$ and $\delta>0$. Suppose that $M_n^\delta$ contains completely $\partial X_\Omega$. Then the ECH capacities of $X^\delta_\Omega$ are given by 
    $$c_k(X^\delta_\Omega)=\max\{l^\delta_\Omega(\Lambda):\mathcal{L}_n(\Lambda)=k\} $$
    Here the maximum is over concave integral paths $\Lambda$ on $V_n$.
\end{theo}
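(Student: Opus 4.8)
The plan is to reduce Theorem~\ref{thm:blowupconcavecapacities} to Theorem~\ref{thm:concavecapacities} by exploiting the fact that the rational blow-up of $M_n$ along $B_n(\delta)$ is a symplectic cut, so that $X_\Omega^\delta$ and $X_\Omega$ have essentially the same ECH-generating data apart from a controlled shift in symplectic actions. First I would make precise the statement already observed in the text that $Y_a$ and $Y_a^\delta=\partial X_\Omega^\delta$ carry the same Reeb dynamics: since $a^\delta=a-\delta(0,1)$, the derivative satisfies $(a^\delta)'=a'$, and from \eqref{eq:Reebandstructure} the Reeb vector field $R=\tfrac{1}{a\times a'}(a_2'\partial_{t_1}-a_1'\partial_{t_2})$ differs from $R^\delta$ only through the normalization $a\times a'$ versus $a^\delta\times a' = a\times a' - \delta\,a_1'$. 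So there is a natural bijection between Reeb orbits (and between ECH generators, hence between the chain complexes $\ECC$) of $Y_a$ and $Y_a^\delta$, and this bijection respects the ECH index and all differential/cobordism data; what changes is the symplectic action of each generator. Concretely, for a Reeb orbit living over the point of $\partial^+\Omega$ corresponding to an edge direction $v=(v_1,v_2)$ of an integral path, the action picks up a correction of exactly $-\delta v_1$ relative to the $M_n$ case — this is the origin of the term $-\delta\, y(\Lambda)$, where $y(\Lambda)$ is the total horizontal displacement $\sum_{v\in\mathrm{Edges}(\Lambda)} v_1$.

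Next I would combine this action-shift with the combinatorial machinery underlying Theorem~\ref{thm:concavecapacities}. Recall from that theorem's proof (which we may assume) that $c_k(X_\Omega)=\max\{l_\Omega(\Lambda):\mathcal{L}_n(\Lambda)=k\}$ arises because the ECH generator realizing $c_k$ corresponds to a concave integral path $\Lambda$ with $\mathcal L_n(\Lambda)=k$ whose total action is $l_\Omega(\Lambda)=\sum_{v} v\times p_v$. Under the blow-up the lattice-point count $\mathcal L_n(\Lambda)$ is unchanged (the region $V_n^\delta$ relevant to $X_\Omega^\delta$ has its combinatorics governed by the same lattice and the same paths, since by hypothesis $M_n^\delta$ contains all of $\partial X_\Omega$ and so $\partial^+\Omega$ sits inside $V_n^\delta$). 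Therefore the same indexing set of paths governs both capacities, and the action of the generator attached to $\Lambda$ changes from $l_\Omega(\Lambda)$ to $l_\Omega(\Lambda)-\delta\,y(\Lambda)=l_\Omega^\delta(\Lambda)$. Taking the maximum over $\{\Lambda:\mathcal L_n(\Lambda)=k\}$ then yields $c_k(X_\Omega^\delta)=\max\{l_\Omega^\delta(\Lambda):\mathcal L_n(\Lambda)=k\}$.

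To make the action-correction claim rigorous I would compute the action of a Reeb orbit directly. On $Y_a$ a simple Reeb orbit over a smooth point of $\partial^+\Omega$ has action $\lambda_a$-period equal to $2\pi$ times the relevant component of $a$, and an iterated/combinatorial family of orbits summing to a path $\Lambda$ has total action $\sum_v v\times p_v$; substituting $a\mapsto a^\delta=a-\delta(0,1)$ replaces $v\times p_v = v_1 (p_v)_2 - v_2 (p_v)_1$ by $v\times(p_v-\delta(0,1)) = v\times p_v - \delta v_1$, and summing gives the global correction $-\delta\sum_v v_1 = -\delta\, y(\Lambda)$. I would also need to check that the symplectic cut does not introduce new ECH generators: the collapsing of the $\partial_{t_1}$-orbits over $\partial B_n(\delta)$ replaces the orbifold point by an exceptional sphere, but since the hypothesis guarantees $\partial X_\Omega$ lies entirely in the smooth region $M_n^\delta$, the ECH of $(X_\Omega^\delta,\partial X_\Omega^\delta)$ is computed purely from $Y_a^\delta$, exactly as in the $M_n$ case with $Y_a$.

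The main obstacle I expect is precisely this last point — justifying that passing through the symplectic cut (a non-exact, non-smooth-at-one-point modification of the ambient space) does not alter the ECH chain complex of the domain beyond the clean action shift. This requires either invoking that the blow-up is supported away from $\partial X_\Omega$ so that a neighborhood of $\partial X_\Omega$ (which is all ECH sees, via the relevant filtered complexes and cobordism maps from Lemma~\ref{lem:capacitiesellipsoids} and the proof of Theorem~\ref{thm:concavecapacities}) is literally unchanged, or a short direct argument that the completion of $X_\Omega^\delta$ and the completion of $X_\Omega$ have isomorphic ECH up to the action rescaling on generators. Everything else — the bijection of orbits, invariance of $\mathcal L_n$, and the $-\delta\,y(\Lambda)$ bookkeeping — is a routine adaptation of the arguments already used for Theorem~\ref{thm:concavecapacities}.
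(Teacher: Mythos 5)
Your overall strategy --- keep the Reeb dynamics of $Y_a$, shift the action contributed by an edge $v$ by $v\times(-\delta(0,1))=-\delta v_1$, sum over edges to get $-\delta\,y(\Lambda)$, and rerun the proof of Theorem \ref{thm:concavecapacities} with $l^\delta_\Omega$ in place of $l_\Omega$ --- is exactly what the paper does (its proof is a two-line reduction of precisely this kind), and your computation of the action shift is correct. Your worry about the symplectic cut is also resolved the way you suspect: the capacities are read off from the boundary contact manifold, and since by hypothesis $\partial X_\Omega$ sits in the smooth region, $\partial X^\delta_\Omega$ is the same contact manifold with only the Liouville primitive, hence the action functional, changed.

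The gap is in how you transfer the formula itself. You argue that ``the ECH generator realizing $c_k$ corresponds to a path $\Lambda$ with action $l_\Omega(\Lambda)$; the action becomes $l^\delta_\Omega(\Lambda)$; take the max.'' But in the proof of Theorem \ref{thm:concavecapacities} the equality is a sandwich of two inequalities of different natures. The spectral side (Lemma \ref{lem:capupperbound}) only ever yields the upper bound $c_k\le\max\{l_\Omega(\Lambda):\mathcal{L}_n(\Lambda)=k\}$, because $c_k$ is a minimum over homology representatives of a maximum of actions; knowing the generators, their indices and their (shifted) actions does not by itself determine this min--max, and the $\Lambda$ optimizing $l_\Omega$ need not optimize $l^\delta_\Omega$. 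The lower bound $\max\{l_\Omega(\Lambda)\}\le c_k(X_\Omega)$ is not spectral at all: it comes from embedding the weight-expansion packing $B_n(w_0)\sqcup\coprod_i B(w_i)$ into $X_\Omega$ and using monotonicity together with the combinatorial inequality of Section \ref{sec:com<packing}. To prove the blown-up version you must rerun that half as well, and its base case changes: the first ball $B_n(w_0)$ of the packing is itself hit by the blow-up and must be replaced by the blown-up singular ball, whose capacities are not those of Lemma \ref{lem:capacitiesellipsoids} but the corresponding $\delta$-corrected sequence; moreover the inductive combinatorial inequality must be re-verified for $l^\delta_\Omega$, which is where the additivity of $y(\Lambda)$ under the splitting $\Lambda\mapsto(\Lambda_1,\Lambda_2,\Lambda_3)$ and its behaviour under corner rounding actually enter. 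The paper's proof explicitly flags both points; your ``routine adaptation'' covers the bookkeeping but not the change of base case, and the transfer mechanism you describe would, taken literally, establish only one of the two inequalities.
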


We give the proof of this version of the theorem at the end of Section \ref{sec:ECHspectrum}.

\begin{rem}
    Theorem \ref{thm:blowupconcavecapacities} can be modified in similar ways when we consider several blow-ups of $M_n$.
\end{rem}

\subsubsection{Almost Toric Fibration}
In this section we mention how the almost toric fibration is related to the present work. The almost toric fibrations were introduced by Symington \cite{symington2002dimensions} and a nice introduction to the topic can be found in \cite{evans2022lectureslagrangiantorusfibrations}.

In our four dimensional setting recall that a toric manifold $M$ is a symplectic manifold equippend with an effective Hamiltonian  action of $\To^2$. Equivalently, a toric symplectic manifold $M$  is a completely integrable system with elliptic singularities.  An \textit{almost toric fibration} or ATF is a completely integrable system on a four-manifold $M$ with elliptic singularites and focus-focus singularities.



An interesting property of the almost toric fibrations is that some of the orbifolds points of a toric simplecitc manifold can be desingularized transforming the toric simpletic manifold into a almost toric fibration. The most important argument for our present work is that for $M_2$ and $M_4$ the orbifolds singularities are $A_n$ singularities that admits desingularizations using almost toric fibrations \cite[Sec. 7.3]{evans2022lectureslagrangiantorusfibrations}.  Futhermore, the desingularizations of $M_2$ is symplectomorphic to $T^*\mathbb{S}^2$ and $M_4$ is symplectormophic to $T^*\mathbb{RP}^2$ in way that the toric map is preserved. This in turn implies that symplectic embeddings of $T^*\mathbb{S}^2$ and   $T^*\mathbb{RP}^2$ can be study from the perspective of the spaces $M_2$ and $M_4$.

\subsection{Some applications}

In this section we give some applications of the principal results. 

\subsubsection{Singular ball capacity.}

Let $X_\Omega$ be a four dimensional toric domain in $M_n$ symplectic manifold. We define \textit{$n$-singular ball} capacity as
\begin{equation}
    c_{B_{n}}(X)=\sup\{r:B_{n}(r)\hookrightarrow X\}
\end{equation}
where we consider the embedding to be symplectic and to send the orbifold singularity into the orbifold singularity. In this section we want to stablish the following proposition:
\begin{pro}
    \label{pro:calballcapacity}
    Let $X_\Omega\subset M_n$ be a concave toric domain with the first term of the weight expansion (see section \ref{weightexpansions}) equal to $a_1$. Then 
    $$c_{B_n}(X_{\Omega})=a_1$$
    That is the embedding given by the inclusion is optimal. 
\end{pro}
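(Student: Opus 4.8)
\textbf{Proof plan for Proposition \ref{pro:calballcapacity}.}

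The plan is to prove the two inequalities $c_{B_n}(X_\Omega) \geq a_1$ and $c_{B_n}(X_\Omega) \leq a_1$ separately. The lower bound is the easy direction: by the very construction of the weight expansion (Section \ref{weightexpansions}), the first term $a_1$ is defined to be the largest real number such that the singular triangle $\Omega_1$ with vertices $(0,0)$, $(na_1, a_1)$ and $(0,a_1)$ is contained in $\Omega$. Since $B_n(a_1) = X_{\Omega_1}$ by definition of the singular ball, and $\Omega_1 \subseteq \Omega$ implies $X_{\Omega_1} \subseteq X_\Omega$ with the inclusion being a symplectic embedding sending the origin (the orbifold singularity) to the origin, we conclude $c_{B_n}(X_\Omega) \geq a_1$. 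I would remark here that the inclusion is the candidate optimal embedding in the statement.

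For the upper bound I would argue by contradiction using monotonicity. Suppose $B_n(r) \hookrightarrow X_\Omega$ symplectically with the embedding sending orbifold singularity to orbifold singularity, for some $r > a_1$. Since the embedding respects the singularities, monotonicity of ECH capacities applies in this setting (this is precisely the case discussed in Section \ref{sec:desingularization} where the technical obstruction disappears), so $c_k(B_n(r)) \leq c_k(X_\Omega)$ for all $k$. By Lemma \ref{lem:capacitiesellipsoids} we have $c_k(B_n(r)) = N_k^n(r,r)$, and by Theorem \ref{the:ballpacking} (or directly Theorem \ref{thm:concavecapacities}) we have a combinatorial handle on $c_k(X_\Omega)$. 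The strategy is to find a specific index $k$ — presumably $k$ corresponding to the first ``interesting'' lattice point, e.g. the smallest $k$ for which $N_k^n(r,r)$ jumps — at which the inequality $N_k^n(r,r) \leq c_k(X_\Omega)$ fails once $r > a_1$. Concretely, $N_0^n(r,r) = N_1^n(r,r) = \cdots = 0$ up to the point where $k_1 + k_2 = n$ becomes available, and the first nonzero capacity of $B_n(r)$ equals $nr$ (achieved at the lattice point giving $k_1+k_2 = n$, with value $r\cdot k_1 + r \cdot k_2 = rn$), occurring at the index $k$ counting the lattice points strictly under the relevant integral path. On the other side, for the concave toric domain $X_\Omega$, the same index $k$ gives $c_k(X_\Omega) = \max\{l_\Omega(\Lambda) : \mathcal{L}_n(\Lambda) = k\}$, and the maximizing path $\Lambda$ at that small value of $k$ is forced (being the path hugging the axes up to the first available lattice point), yielding $c_k(X_\Omega) = n a_1$ by Example \ref{exa:ellipsoidaction} and the definition of $a_1$ as the largest triangle inscribed in $\Omega$. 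Then $nr = c_k(B_n(r)) \leq c_k(X_\Omega) = na_1$ forces $r \leq a_1$, a contradiction.

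The main obstacle I anticipate is the bookkeeping in identifying the correct index $k$ in both sequences simultaneously and verifying that the maximizing integral path for $X_\Omega$ at that index is genuinely the one bounding the inscribed singular triangle $\Omega_1$ — i.e. that no ``taller'' concave integral path enclosing the same number of lattice points can have larger $\Omega$-length. This amounts to a small extremal combinatorial lemma: among concave integral paths in $V_n$ with a fixed (small) lattice-point count, the one maximizing $l_\Omega$ is the one whose tangent line data is pinned at the vertex of $\Omega_1$. I would isolate this as a short lemma, proved by the same convexity argument that underlies Theorem \ref{thm:concavecapacities}: since the complement of $\Omega$ is convex, $l_\Omega(\Lambda)$ is controlled by where $\partial^+\Omega$ first meets the relevant supporting lines, and for the minimal lattice count this meeting point is exactly the corner $(na_1, a_1)$ defining $a_1$. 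Once that lemma is in hand, the contradiction argument closes cleanly and the proposition follows.
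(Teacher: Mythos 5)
Your proposal is correct and takes essentially the same route as the paper: the lower bound comes from the inclusion $B_n(a_1)=X_{\Omega_1}\subseteq X_\Omega$, and the upper bound from monotonicity of the ECH capacities (for singularity-preserving embeddings) applied at the first nonzero index, where $c_1(B_n(r))=nr$ must be at most $c_1(X_\Omega)=na_1$. The only difference is that the paper evaluates $c_1(X_\Omega)=\max\{na_1,a_2,\dots,a_k\}=na_1$ directly from the disjoint-union packing formula (Equation \ref{eq:capdisjointpack}), which sidesteps the extremal lemma about maximizing integral paths that you flag as the main remaining obstacle.
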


To prove this we need to stablish some straighforward but important observations. Recall that the capacities of $B_n(a)$ are given by Lemma \eqref{lem:capacitiesellipsoids}. This equation can be rewritten as 
\begin{equation}
    c_k(B_n(a))=and
\end{equation}
Where $d$ is the positive integer such that 
\begin{equation}
    \displaystyle\frac{d^2p-d(p-2)}{2}\leq k \leq \displaystyle\frac{d^2p+d(p+2)}{2}
\end{equation}
From the property of the disjoint union of the ECH capacities, we also have
\begin{equation}
    \label{eq:capdisjointpack}
    \begin{split}c_k\left(B_n(a_1)\amalg\coprod_{i=2}^k B(a_i)\right)&=\max\{ a_1nd_1+\sum_{i=2}^na_id_i: \\
    &\frac{d_1^2n-d_1(n-2)}{2}+\sum_{i=2}^k\frac{d_i^2+d_i}{2}\leq k\}
    \end{split}
\end{equation}

\begin{proof}[proof of Proposition \ref{pro:calballcapacity}]
    By definition we have that $a \leq c_{B_n}(X_\Omega)$. By Equation \ref{eq:capdisjointpack} we have that 
    $$c_1(X_\Omega)=\max\{na_1,\,a_2\dots\,a_k\}$$
    From this expression, it is easy to check that indeed $c_1(X_\Omega)=na_1$. Which implies that $c_{B_n}(X_\Omega)\leq a$.
\end{proof}

\subsubsection{Almost toric fibrations.}
With the use of Lemma \ref{lem:capacitiesellipsoids} and techniques from almost toric fibration we can recover some results for the disk bundle of $\mathbb{S}^2$ and the disk bundle of $\mathbb{R}P^2$ discussed in \cite{Ferreira_2022}.
\begin{exa}
\label{S2}
In the Lemma \ref{lem:capacitiesellipsoids} take $n=2$ and $a=b=1$ then $$c_k(B_2(1))=N^2_k(1,1)=(0,2,2,2,4,4,4,4,4,6,6,6,6,6,6,6, \dots)$$
Notice that $2\pi c(B_2(1))$ are exactly the capacities for $D^*\Sp^2$ calculated in \cite{Ferreira_2022}.  Using the observations of \cite{wu2015exotic} in Section 3. It can be shown that after a desingularization of the orbifold point $2\pi c(B_2(1))$ we obtained an almost toric fibration symplectomorphic to $D^*\Sp^2$. This recovers part $(i)$ of Theorem 1.3 of \cite{Ferreira_2022}. 
We can check that from the moment map it follows $\text{int } B(1)\xhookrightarrow{s} B_2(1)$ and by the ECH capacities this embedding is sharp. We conclude that $c_{Gr}(D^*\Sp^2)=2\pi$ from which recover part (i) of \cite[Theo.~1.1]{Ferreira_2022} follows. 
\end{exa}
\begin{exa}
\label{RP2}
Similarly take $n=4$ and $a=b=1$ in Lemma \ref{lem:capacitiesellipsoids} then
    $$c_k(B_4(1))=N^4_k(1,1)=(0,4,4,4,4,4,8,8,8,8,8,8,8,8,8, \dots)$$
and notice that $\pi c(B_4(1))$ are exactly the capacities of $D^*\mathbb{R}P^2$. As in the previous case we can use the observations of \cite{wu2015exotic} in Section 3, It can be shown that after desingularization of the orbifold point $\pi B_4(1)$  can  rational blown down trade is symplectomorphic $D^*\mathbb{R}P^2$. This recovers part (ii) of Theorem 1.3 of \cite{Ferreira_2022}. Using techniques from almost toric fibration as in  \cite{casals2022full} we can see that $B(1)\xhookrightarrow{s} B_4(1)$ from which part (ii) of \cite[Theo~1.1]{Ferreira_2022} follows. In particular, $c_{Gr}(D^*\mathbb{R}P^2)=2\pi$.
\end{exa}
\subsection{The rest of the paper.}
\label{sec:restofthepaper}

Now lets give an idea of the proof of Theorem \ref{thm:concavecapacities} and Theorem \ref{thm:concavecapacities}. Lets begin by the following simple result.
\begin{lemm}
    Suppose that $X_\Omega$ is a concave toric domain in $M_n$ with weight expansion $\textbf{w}=(w_0;w_1,\dots, w_s)$. Then 
    \begin{equation}
        B_n(w_0)\sqcup\bigsqcup_{i=1}^s B(w_i)\xhookrightarrow{s} X_\Omega
    \end{equation}
\end{lemm}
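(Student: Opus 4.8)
The strategy is to induct on the number of steps in the weight-expansion algorithm, exactly as in \cite[Sec.~1.3]{Choi_2014}, using the geometric fact that each step of the algorithm cuts $\Omega$ into three pieces whose toric domains embed disjointly into $X_\Omega$. First I would set up the base case: if $\Omega$ is the triangle with vertices $(0,0),(na,a),(0,a_0)$, then $X_\Omega = E_n(a_0, n a_0)$ up to identification and the weight expansion is the single entry $(a_0)$, so the claim reduces to the identity embedding $B_n(w_0)\xhookrightarrow{s} X_\Omega$ together with (if $a\neq a_0$) the elementary fact that a singular ball of the appropriate radius sits inside a singular ellipsoid with the same corner at the $(n,1)$-axis. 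This last embedding is the singular analogue of $B(r)\hookrightarrow E(r, \cdot)$ and follows directly from the moment-map picture: a sub-triangle of $\Omega$ through the origin gives an honest open subset, hence a symplectic inclusion.

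Next I would carry out the inductive step. Given a rational concave $\Omega$, apply one step of the algorithm: let $\Omega_1$ be the maximal triangle $(0,0),(na,a),(0,a)$ contained in $\Omega$, and let $\Omega'_2,\Omega'_3$ be the two regions of $\Omega$ lying above the line $y=a$ to the left of $x=x_2$ and right of $x=x_3$ respectively. The key geometric claim is that $X_{\Omega_1},\ X_{\Omega'_2},\ X_{\Omega'_3}$ embed symplectically into $X_\Omega$ with pairwise disjoint images. For $X_{\Omega_1}=B_n(a)$ this is again the moment-map inclusion. For $\Omega'_2$ and $\Omega'_3$, the relevant point is that these pieces lie in regions of $V_n$ away from the orbifold locus, so that $\pi_n^{-1}$ of a neighborhood looks like an ordinary open subset of $\R^2\times\To^2$; the prescribed affine maps ($(x,y)\mapsto(x,y-a)$ for $\Omega_2$, and $(x,y)\mapsto(x-a,y-a)$ followed by $\begin{pmatrix}0&1\\-1&n\end{pmatrix}$ for $\Omega_3$) are $\mathrm{AGL}_2(\Z)$ transformations, which by the standard toric dictionary (see \cite{Choi_2014}) lift to symplectomorphisms of the corresponding toric pieces. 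Composing the inductive hypotheses $B(w_i)$'s $\xhookrightarrow{s}X_{\Omega_2}$ and $X_{\Omega_3}$ (now genuine \emph{smooth} concave toric domains in $\R^4$, handled by \cite[Theo.~1.4]{Choi_2014}) with these inclusions, and using $w(\Omega)=w(\Omega_1)\cup w(\Omega_2)\cup w(\Omega_3)$ together with disjointness of images, yields the desired disjoint-ball embedding into $X_\Omega$. One must also check termination/convergence of the algorithm: for rational $\Omega$ it terminates in finitely many steps, so no limiting argument is needed here (and the remark after the weight-expansion definition covers the general case).

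The main obstacle, and the place I would spend the most care, is verifying that the three sub-pieces really do embed with \emph{disjoint} images and that the gluing respects the symplectic form near the seams $y=a$, $x=x_2$, $x=x_3$. In the smooth ($n=1$) case this is the content of Choi--Ghiggini--Hutchings--Kegel--Lekili--et al.\ type arguments in \cite{Choi_2014}: one opens up the corners slightly, embeds the closed sub-pieces into the interior, and checks the images are disjoint by projecting to the moment polygon. The new feature here is only the behavior near the orbifold point, which is entirely contained in the single piece $X_{\Omega_1}=B_n(a)$; since $\Omega_1$ is a triangle with one vertex at the origin, $X_{\Omega_1}$ is literally an open subset of $M_n$ containing the singularity, so the inclusion is tautological and no new analysis near the orbifold point is required. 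Thus the proof is essentially the concave-domain ball-packing argument of \cite{Choi_2014} applied verbatim to $\Omega_2,\Omega_3$, with the orbifold-containing triangle peeled off first; I would present it in that order.
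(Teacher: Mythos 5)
Your proposal is correct and follows essentially the same route as the paper, whose proof is simply the observation that the argument of \cite[Lem.~1.7]{Choi_2014} carries over verbatim, with the sole new feature being that the singular ball $B_n(w_0)$ (the piece containing the orbifold point) is embedded by inclusion and so needs no Traynor trick. Your write-up is a faithful, more detailed expansion of exactly that reduction: peel off the orbifold-containing triangle, identify $\Omega_2,\Omega_3$ with smooth concave toric domains via the stated $\mathrm{AGL}_2(\mathbb{Z})$ maps, and invoke the smooth ball-packing embedding of \cite{Choi_2014}.
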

    
\begin{proof}
    This is similar to \cite[Lem.~  1.7]{Choi_2014}. The only difference is that the singular ball $B_n(w_0)$ doest not require the use of the Traynor trick since it is embedded by inclusion. 
\end{proof}

It follows from the lemma above and the monotonicity of ECH capacities that 
\begin{equation}
    \label{eq:ballpack<capacities}
    c_k\left(B_n(w_0)\sqcup\bigsqcup_{i=1}^s B(w_i)\right)\leq c_k(X_\Omega)
\end{equation}

Now, in Section \ref{sec:com<packing} we prove that 
\begin{equation*}
\max\{l_\Omega(\Lambda):\mathcal{L}_n(\Lambda)=k\} \leq c_k\left(B_n(w_0)\sqcup\bigsqcup_{i=1}^s B(w_i)\right)
\end{equation*}
Using some combinatorials techniques and induction. This part of the proof relies complitely on the combinatorial properties of both expressions. Finally, we compute the ECH index of the embedded contact homology of the toric concave lens space $L(n,1)$ and relate it to a combinatorial model. The special properties of the lens spaces allows us to simplify the expression of the capacities and with some more combinatorial tricks we prove in Lemma \ref{lem:capupperbound} that 
\begin{equation*}
    c_k(X_\Omega)\leq \max\{l_\Omega(\Lambda):\mathcal{L}_n(\Lambda)=k\} 
\end{equation*}

Using the three above inequalities both theorems follow.





\section{Combinatorial properties.}
In this section we describe several combinatorial properties that are going to be useful for the rest of the exposition.

\subsection{Fundamental group of the lens spaces.}
\label{sec:fundamentalgroup}
From Definition \ref{defi:lensspace} and the Seifert-Van Kampen Theorem we can see that  $\pi_1(L(n,1))$ can be identified with $\mathbb{Z}^2/\text{span}\{(0,1),(n,1)\}$ which we do for the rest of the paper. Notice also that in this case $\pi_1(L(n,1))$ and $H_1(L(n,1))$ are isomorphic. Notice that the vector $(-1,0)$ generates the group $\mathbb{Z}^2/\text{span}{(0,1),(n,1)}$.

For any $w\in \mathbb{Z}^2$ we can formally associate an homology class $H_1(L(n,1))$. Notice that there exist uniques $k_1$, $k_2$ integers and $0\leq l<n$ such that 
\begin{equation}
    \label{eq:homologyofvector}
    w+l(-1,0)=k_1(n,1)+k_2(0,1) 
\end{equation}
So we can associate $l$ as the homology class of $w$. Similarly, we can associate to any poligonal path $\Lambda$ with vertices on lattice points a homology class $H_1(L(n,1))$. Write $[\Lambda]=\sum_{v\in \text{Edge}(\Lambda)} v$, so if we take $w=[\Lambda]$ in Equation \ref{eq:homologyofvector} we can associate the homology class $l$. 

\subsection{The combinatorial expression of the ECH capacities and the ball packing.}
\label{sec:com<packing}
\begin{lemm}
\label{lem:capellip}
Let $n$ be an integer at least one, we have 
    $$c_k(E_n(a,b))=\max\{l_\Omega(\Lambda):\mathcal{L}(\Lambda)=k\}$$
    where $a,b$ are positive real numbers. 
\end{lemm}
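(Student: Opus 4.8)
The plan is to show that for an ellipsoid with singularities $E_n(a,b)$ the combinatorial expression on the right-hand side reduces to the sequence $N^n(a,b)$ introduced before Lemma~\ref{lem:capacitiesellipsoids}, and then invoke that lemma. By Lemma~\ref{lem:capacitiesellipsoids} we know $c_k(E_n(a,b))=N_k^n(a,b)$, so it suffices to prove the purely combinatorial identity
$$\max\{l_\Omega(\Lambda):\mathcal{L}_n(\Lambda)=k\}=N_k^n(a,b),$$
where $\Omega$ is the triangle with vertices $(0,0)$, $a(n,1)$, $b(0,1)$. This is the analogue, in the $M_n$ setting, of the fact that for the ordinary ellipsoid the maximum of the $\Omega$-length over concave integral paths enclosing $k$ lattice points equals $N_k(a,b)$; the reference point here is the corresponding computation in \cite[Sec.~1.6]{Choi_2014}, adapted to the lattice geometry of the cone $V_n$.

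First I would use Example~\ref{exa:ellipsoidaction}: for a concave integral path $\Lambda$, if $L$ is the line tangent to $\Lambda$ that leaves $\Lambda$ in the unbounded region and meets the relevant position $(x_0,y_0)$, then $l_\Omega(\Lambda)=a\,t_1+b\,t_2$ where $t_1,t_2$ are the two horizontal displacements determined by $(x_0,y_0)$ relative to the two axes of $V_n$. Since $\Omega$ is a triangle, every edge $v$ of $\Lambda$ has its optimal point $p_v$ at the vertex $a(n,1)$ or $b(0,1)$ (or along the slanted edge), so $l_\Omega(\Lambda)$ is linear and easy to evaluate; in particular the maximum is attained at a path $\Lambda$ all of whose edges point "outward" optimally, i.e. $\Lambda$ may be taken to be a single straight segment from a lattice point on the $(n,1)$-axis to a lattice point on the $y$-axis. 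For such a segment from $k_1(n,1)$ to $k_2(0,1)$, a direct count (Pick's theorem applied to the triangle with vertices $(0,0)$, $k_1(n,1)$, $k_2(0,1)$, together with the divisibility structure forced by the lattice generated by $(n,1)$ and $(0,1)$, which has index $n$) shows that the number of interior lattice points $\mathcal{L}_n(\Lambda)$ is controlled precisely by the constraint that $k_1+k_2$ be a multiple of $n$, matching the defining condition of $N^n(a,b)$, and that $l_\Omega(\Lambda)=a k_1 + b k_2$. Conversely, any concave integral path enclosing $k$ lattice points has $\Omega$-length at most that of the straight segment with the same endpoints (by concavity, since replacing a concave path by its chord only increases each term $v\times p_v$ while not decreasing the enclosed-point count in the triangle case), so the maximum over all concave integral paths is realized among straight segments. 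Assembling these, $\max\{l_\Omega(\Lambda):\mathcal{L}_n(\Lambda)=k\}$ is exactly the $k$-th smallest value of $a k_1 + b k_2$ subject to $k_1,k_2\geq 0$ and $n\mid k_1+k_2$, which is $N_k^n(a,b)$.

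The main obstacle I expect is the bookkeeping in the lattice-point count: one must be careful about which lattice points lie on $\Lambda$ versus strictly inside the enclosed region (Definition~\ref{Counting points} excludes points on $\Lambda$), and about how the index-$n$ sublattice spanned by $(n,1)$ and $(0,1)$ interacts with Pick's theorem on the non-standard cone $V_n$. In the ordinary ($n=1$) case this is the classical correspondence between $N(a,b)$ and lattice-point counts under concave paths; here one needs to verify that the extra divisibility condition "$k_1+k_2\in n\Z$" is exactly what the lattice geometry of $V_n$ imposes, and that the monotone reindexing of $\{a k_1 + b k_2\}$ under this constraint is consistent with the ordering used to define $N^n(a,b)$. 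Once the straight-segment reduction (via concavity) and this counting lemma are in place, the identity — and hence the statement $c_k(E_n(a,b))=\max\{l_\Omega(\Lambda):\mathcal{L}(\Lambda)=k\}$ — follows immediately.
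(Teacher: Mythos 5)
Your high-level framing agrees with the paper's: both reduce the lemma to the purely combinatorial identity $\max\{l_\Omega(\Lambda):\mathcal{L}_n(\Lambda)=k\}=N_k^n(a,b)$, taking the value $c_k(E_n(a,b))=N_k^n(a,b)$ from Lemma \ref{lem:capacitiesellipsoids} and using Example \ref{exa:ellipsoidaction}. The gap is in how you try to prove that identity: the reduction to straight segments fails. Replacing a concave integral path by its chord does increase the $\Omega$-length (this is inequality \eqref{ine:inv.triangle.inequality}, or Lemma \ref{lem:generalized_triangle_ineq}), but it generically \emph{strictly increases} the enclosed lattice-point count, so the chord is not a competitor for the same $k$; your argument only bounds $\max\{l_\Omega(\Lambda):\mathcal{L}_n(\Lambda)=k\}$ by a supremum over segments with $\mathcal{L}_n\geq k$, which is vacuous. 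Moreover the maximum genuinely need not be attained at a single segment: already for $n=1$, $a=b=1$ (standard first-quadrant picture) and $k=4$ one has $c_4(B(1))=N_4(1,1)=2$, while a segment from $(k_1,0)$ to $(0,k_2)$ has $\mathcal{L}=\tfrac{1}{2}(k_1k_2+k_1+k_2-\gcd(k_1,k_2))$, so the only segments with $\mathcal{L}=4$ join $(4,0)$ to $(0,1)$ (or the mirror) and have $\Omega$-length $1$, not $2$; the actual maximizer is the two-edge path through $(0,2),(1,1),(3,0)$.

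A second, related error is the formula $l_\Omega(\text{segment from }k_1(n,1)\text{ to }k_2(0,1))=ak_1+bk_2$. A one-edge path contributes a single term $v\times p_v$ with $p_v$ at one of the two endpoints of $\partial^+\Omega$ (for $E(1,1)$ and the segment $(4,0)$--$(0,1)$ this gives $1$, not $5$); a sum in which both $a$ and $b$ appear only arises for paths with at least two edges of slopes on either side of that of $\partial^+\Omega$. The pair $(k_1,k_2)$ with $n\mid k_1+k_2$ indexing $N^n_k(a,b)$ is the lattice point through which the supporting line parallel to $\partial^+\Omega$ passes, not the pair of endpoints of the maximizing path, and there is no divisibility constraint on the endpoints of a segment in $V_n$. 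The paper's proof works instead with that supporting line $L$: for the upper bound, $l_\Omega(\Lambda)=at_1+bt_2$ evaluated at the tangency point equals $c_{k'}$ where $k'\leq\mathcal{L}_n(\Lambda)$ is the number of lattice points cut off by $L$; for the lower bound, one takes $\Lambda$ to be the boundary of the convex hull of the lattice points not strictly cut off by the line realizing $c_k$. You would need to replace your segment reduction by an argument of this type for the proof to go through; the rational case by continuity is fine.
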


\begin{proof} Write $X_\Omega=E_n(a,b)$ and suppose first $b/a$ is irrational. 

In the $b/a$ is irrational we can describe the capacities also in the following way. There exist exactly one point $(x_0,y_0)$ in $V_n$ such that a line $L$ with bound exactly $k$ with the axis of $V_n$. Then, $c_k(E_n(a,b))=at_1+bt_2$ where $t_1$ is the horizontal distant from the $y$-axis to the point $(x_0,y_0)$ and $t_2$ is the horizontal distant from $(x_0,y_0)$ and the $(n,1)$-axis.

Given $\Lambda$ with $\mathcal{L}(\Lambda)=2k$. Let $L$ be the curve tangent to $\Lambda$ described in. \ref{exa:ellipsoidaction}. We conclude that $l(\Lambda)=c_{k'}(X_\Omega)$ where $k'$ is the number of lattice points contained in the region bounded by $L$ and the axis. By construction $k'\leq k$ therefore $l_\Omega (\Lambda)\leq c_k(X_\Omega)$. We conclude that 
    $$\max\{l_\Omega(\Lambda):\mathcal{L}(\Lambda)=k\}\leq c_k(E_n(a,b))$$
    Now let $L$ be line with that contains the point $(x_0,y_0)$ and enclosed exactly $k$ lattice points with the axis. Let $\Lambda$ be the concave toric domain that is the boundary (minus the axis) of the convex hull of the lattice points not strictly enclosed by the axis. Therefore $l_\Omega(\Lambda)=c_k(X_\Omega)$.

    The case in which $a/b$ is rational follows by continuity as in \cite[Lem. ~2.3]{Choi_2014}
\end{proof}

\begin{lemm} Suposse that $X_\Omega$ is a concave toric domain with weight expansion $w(\Omega)=(w_0;w_1,\dots,w_s)$ then:
    $$\max\{l_\Omega(\Lambda):\mathcal{L}_n(\Lambda)=k\} \leq c_k\left(B_{n}(w_0)\amalg \coprod_{i=1}^s B(w_i)\right)$$     
\end{lemm}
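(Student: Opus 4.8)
The plan is to mirror the inductive strategy of \cite[Sec.~4]{Choi_2014}, using the recursive structure of the weight expansion defined in Equation \eqref{eq:weightexpantion}. The base case is the ellipsoid: when $\Omega$ is the triangle with vertices $(0,0), (na,a), (0,a_0)$, the domain $X_\Omega$ is a singular ellipsoid $E_n(a_0, na)$ (or similar), its weight expansion has a single term, and the claimed inequality reduces to Lemma \ref{lem:capellip}, which gives the stronger statement that the two sides are equal. So the work is entirely in the inductive step.

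For the inductive step, suppose $X_\Omega$ is not a singular ellipsoid, so $w(\Omega) = w(\Omega_1) \cup w(\Omega_2) \cup w(\Omega_3)$ where $\Omega_1$ is the maximal inscribed singular triangle, $\Omega_2$ is (the image under a translation of) a concave toric domain in $\mathbb{R}^4$, and $\Omega_3$ is (the image under a translation and an $\mathrm{SL}_2(\mathbb{Z})$ transformation of) another concave toric domain in $\mathbb{R}^4$. Given a concave integral path $\Lambda$ in $V_n$ with $\mathcal{L}_n(\Lambda) = k$, I would decompose $\Lambda$ with respect to the line $y = a$ (the top edge of $\Omega_1$): the portion of $\Lambda$ weakly below $y=a$ together with the segment of that line gives an integral path $\Lambda_1$ relevant to $\Omega_1$; the portion strictly above $y=a$ and left of $x = x_2$ transforms (via the same translation used to define $\Omega_2$) to a concave integral path $\Lambda_2$ in the first quadrant; and the portion above $y = a$ and right of $x = x_3$ transforms (via the translation and the matrix $\left(\begin{smallmatrix} 0 & 1 \\ -1 & n \end{smallmatrix}\right)$) to a concave integral path $\Lambda_3$. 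The key bookkeeping identities I need are: (a) $l_\Omega(\Lambda) \le l_{\Omega_1}(\Lambda_1) + l_{\Omega_2}(\Lambda_2) + l_{\Omega_3}(\Lambda_3)$, using that $\Omega_1 \subseteq \Omega$, that $\partial^+ \Omega$ lies above the relevant supporting lines, and that the cross product $v \times p_v$ is invariant under the area-preserving lattice transformations; and (b) $\mathcal{L}_n(\Lambda) \ge \mathcal{L}_n(\Lambda_1) + \mathcal{L}(\Lambda_2) + \mathcal{L}(\Lambda_3) + (\text{correction terms})$, i.e.\ the lattice-point count over $V_n$ is at least the sum of the counts for the three pieces. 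Then applying Lemma \ref{lem:capellip} to $\Omega_1$ and \cite[Theo.~1.2]{Choi_2014} (the combinatorial $=$ packing statement for concave domains in $\mathbb{R}^4$) to $\Omega_2, \Omega_3$, together with the Disjoint union property of ECH capacities, yields
\begin{equation*}
l_\Omega(\Lambda) \le \sum_{i} c_{k_i}\big(B_n(w_0) \amalg \textstyle\coprod B(w_j)\big)\text{-pieces} \le c_k\Big(B_n(w_0) \amalg \coprod_{i=1}^s B(w_i)\Big),
\end{equation*}
where the $k_i$ are the lattice-point counts of the three sub-paths and the last step uses subadditivity of the index in the Disjoint union formula.

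The main obstacle I expect is the lattice-point accounting in identity (b): when $\Lambda$ is cut by the line $y = a$, lattice points on or near that line are shared between the regions, and the $\mathrm{SL}_2(\mathbb{Z})$ transformation used for $\Omega_3$ can move boundary lattice points in a way that needs careful tracking so that no point is double-counted and the inequality goes the right way (we need $k \ge k_1 + k_2 + k_3$, or at least we need the maximum over all $\Lambda$ with $\mathcal{L}_n = k$ to dominate). This is exactly the delicate combinatorial lemma in \cite{Choi_2014}, and the new feature here is the behavior near the cone point of $V_n$ and along the $(n,1)$-axis, where the path interacts with the $\mathbb{Z}_n$-orbifold structure; I would isolate this as a separate combinatorial lemma about counting lattice points in $V_n$ under the cut-and-transform operation. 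A secondary, more routine obstacle is checking that the transformed paths $\Lambda_2, \Lambda_3$ are genuinely concave integral paths in the first quadrant (concavity is preserved by affine transformations, and integrality by the fact that the matrices lie in $\mathrm{SL}_2(\mathbb{Z})$ and the translations are by lattice vectors), and that the supporting-line description of $l_\Omega$ transforms correctly — these follow from the same cross-product invariance but should be stated explicitly.
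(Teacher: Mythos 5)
Your overall architecture (induct on the weight expansion, split $\Lambda$ into three pieces matching $\Omega_1,\Omega_2,\Omega_3$, then invoke the base case Lemma \ref{lem:capellip}, the standard concave result for the two smooth pieces, and the disjoint-union property) is the same as the paper's. But the specific decomposition you propose --- cutting $\Lambda$ along the line $y=a$ coming from the weight expansion of $\Omega$ --- is not the one the paper uses, and it does not work as stated. Two concrete problems. First, the points where $\Lambda$ crosses the line $y=a$ need not be lattice points, so the three pieces you produce are not concave \emph{integral} paths; neither $\mathcal{L}$ nor the inductive hypothesis applies to them, and if you round the cut points to the lattice both of your bookkeeping claims (a) and (b), which you only assert, become genuinely unclear. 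Second, and more fundamentally, to compare $l_\Omega(\Lambda)$ with $l_{\Omega_1}(\Lambda_1)+l_{\Omega_2}(\Lambda_2)+l_{\Omega_3}(\Lambda_3)$ you need each edge $v$ of $\Lambda$ to be assigned to the sub-domain containing its supporting point $p_v\in\partial^+\Omega$. That supporting point is determined by the \emph{slope} of $v$, not by where $v$ sits in the plane: an edge of $\Lambda$ lying entirely below $y=a$ can have its supporting point in $\partial^+\Omega_2'$ or $\partial^+\Omega_3'$, in which case comparing $v\times p_v$ with $l_{\Omega_1}(v)$ gives nothing useful (and the crude monotonicity $l_{\Omega_1}(v)\le l_\Omega(v)$ coming from $\Omega_1\subseteq\Omega$ points in the wrong direction for your inequality).

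The paper avoids both issues by decomposing $\Lambda$ by edge direction rather than by height: $\Lambda_1$ is the maximal horizontal segment, i.e.\ the edges proportional to $(-1,0)$ (which here plays the role that $(1,-1)$ plays in \cite{Choi_2014}), and the two remaining arcs $\Lambda_2',\Lambda_3'$ are carried over by the same lattice-preserving affine maps $T_2,T_3$ used to define $\Omega_2,\Omega_3$. Since all vertices involved are lattice points and $T_2,T_3$ preserve the lattice, one gets the exact identity $k=k_1+k_2+k_3$; and since the edges of $\Lambda_2'$ (resp.\ $\Lambda_3'$) have precisely the slopes whose supporting points lie in the corresponding piece of $\partial^+\Omega$, one gets the edge-by-edge identity $l_{\Omega_2}(v)=l_\Omega(v)-v\times a(0,1)$ (and its analogue for $\Omega_3$), with the correction terms summing to $l_{\Omega_1}(\Lambda_1)$, so that $l_\Omega(\Lambda)=\sum_i l_{\Omega_i}(\Lambda_i)$ holds on the nose rather than as an inequality. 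If you replace your height cut by this slope cut, the rest of your argument goes through essentially as you describe.
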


\begin{proof}
We are going to argue inductively over $s$. Notice that our base case $s=0$ is cover by Lemma \ref{lem:capellip} since this are the capacities of the singular ball $B_n(w_0)$.

Lets take $s>0$ are assume that for every $s'<s$ the result holds. 

Writen then $w(\Omega)=(a_0,\dots,a_s)$. Let $\Omega_1,\Omega_2$ and $\Omega_3$ be the regions as defined in the weight expansion. Let $W_1,W_2$ and $W_3$ be the disjoint union of the balls defined by the ball packings of $X_{\Omega_1}, X_{\Omega_2}$ and $X_{\Omega_3}$.

To prove the claim it is enough to prove that for every $\Lambda$ with $\mathcal{L}(\Lambda)=k$ there exist $\Lambda_1,\Lambda_2$ and $\Lambda_3$ with 
\begin{equation}
\label{sumlat}
k_1+k_2+k_3=k    
\end{equation}
where $\mathcal{L}(\Lambda_i)=k_i$ and
\begin{equation}
\label{sumnorm}
  l_{\Omega_1}(\Lambda_1)+l_{\Omega_2}(\Lambda_2)+l_{\Omega_3}(\Lambda_3)=l_{\Omega}(\Lambda)  
\end{equation}

Because it follows by induction hypothesis that 
$$l_{\Omega}(\Lambda)=l_{\Omega_1}(\Lambda_1)+l_{\Omega_2}(\Lambda_2)+l_{\Omega_3}(\Lambda_3)\leq c_{k_1}(W_1)+c_{k_2}(W_2) +c_{k_3}(W_3)$$

therefore
$$l_{\Omega}(\Lambda)\leq c_{k}(W_1\amalg W_2 \amalg W_3) $$

\textit{Construction of $\Lambda_1,\Lambda_2$ and $\Lambda_3$:} 

The construction of $\Lambda_1,\Lambda_2$ and $\Lambda_3$ is similar to the construction of $\Omega_1,\Omega_2$ and $\Omega_3$ obtained in the definition of the weight expansion. We define $\Lambda_1$ to be the longest horizontal line contained in the compact space defined by $\Lambda$ and the lines $\{t(0,1):t\geq 0\}$ and $\{t(p,1):t\geq 0\}$. Denote by $A$ the real number such that $\Lambda_1$ hits $A(0,1)$. Notice that $\Lambda_1$ divide $\Lambda$ in two pieces $\Lambda'_1$ and $\Lambda'_2$. Define $T_2:\mathbb{R}^2\rightarrow \mathbb{R}^2$ as the translation by $(0,-a)$ and $T_3:\mathbb{R}^2\rightarrow \mathbb{R}^2$ as the map obtained when translating by $-a(1,2)$ and then multiplying by $\begin{pmatrix} 0 & 1 \\ -1 & n \end{pmatrix}$. Now we define $\Lambda_2=T_1(\Lambda_2')$ and $\Lambda_3=T_3(\Lambda_3')$.

Writte $k_i=\mathcal{L}(\Lambda_i)$. Notice that the functions $T_2$ and $T_3$ preserve lattice points. It follows that $k_1+k_2+k_3=k$. This proves (\ref{sumlat}).

We have the following relationships between the norms involved:
\begin{align*}
    l_{\Omega_2}(v)=l_{\Omega}(v)-v\times a(0,1) \\
    l_{\Omega_3}(v')=l_{\Omega}(v)-v\times a(n,1)
\end{align*}

Where $v'=\begin{pmatrix} 0 & 1 \\ -1 & n \end{pmatrix} v$.

Summing over the corresponding edges give us
\begin{align*}
    l_{\Omega_2}(\Lambda_2)&=l_{\Omega}(\Lambda'_2)-\displaystyle\sum_{v\in \text{Edges}(\Lambda_2)}v\times a(0,1) \\
    l_{\Omega_3}(\Lambda_3)&=l_{\Omega}(\Lambda'_3)-\displaystyle\sum_{v\in \text{Edges}(\Lambda_3)}v\times a(n,1)
\end{align*}

Notice that the quantity $d'=\sum_{v\in \text{Edges}(\Lambda_2)}v\times a(0,1)$ correspond to the horizontal displacement of $\Lambda$ and $d''=\sum_{v\in \text{Edges}(\Lambda_3)}v\times a(n,1)$ correspond to the horizontal distance between the rightmost point of $\Lambda_3$ and the $(n,1)$-axis.

 By noticing that $\Lambda_1$ consist of only multiplicities of the vector $(-1,0)$, that $l_{\Omega_1}(-1,0)=a=a(0,1)\times (-1,0) =a(n,1)\times (-1,0)$ and Example \ref{exa:ellipsoidaction}, we see that 
 $$l_\Omega(\Lambda_1)=a(d'+d'''+d'')$$ 
 
 Where $d'''$ is the number of times that $e_{-1,0}$ appear in $\Lambda$. By summing up the last three equations we conclude the proof. 
\end{proof}

\subsection{Corounding the corner and the polygonal paths.}

In this section we explain a very important operation over the concave integral paths called corounding the corner. Versions of this operation are common in certain settings, see for example \cite{cristofaro2020proof, hutchings2005periodic, hutchings2006rounding}. 

Let $\Lambda$ be a concave integral path in $V_n$. Take a vertex $p$ of $\Lambda$. Notice that $\Lambda$ with the axis define two regions in $V_n$ one bounded and one unbounded. Lets denote by $A\subset V_n$ the set of lattice points contained in the unbounded region including the lattice points in the concave integral path $\Lambda$. Consider the convex hull $C$ of $A-\{p\}$ and let $\Lambda'$ be the concave integral path that contains the lattice points of the boundary of $C$ that are not in the axis and intersect each axis exactly once. We say that $\Lambda$ is obtained from $\Lambda'$ by \textit{corounding the corner}. See Figure \ref{Coroundingexamples} for some examples.

We now describe an important property of the rounding the corner operation. We begin with the following lemma about the $\Omega$-length:

\begin{lemm}
    Suppose that $X_\Omega$ is a concave toric domain in $M_n$. Let $v_1$ and $v_2$ be pair of vectors. Then 
        \begin{equation}
            \label{ine:inv.triangle.inequality}
            l_\Omega(v_1)+l_\Omega(v_2)\leq l_\Omega(v_1+v_2)    
        \end{equation}
\end{lemm}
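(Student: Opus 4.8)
The plan is to unwind the definition of $l_\Omega$ given in Definition \ref{def:omegalength} and reduce the claimed inequality \eqref{ine:inv.triangle.inequality} to the concavity of the boundary curve $\partial^+\Omega$ (equivalently, the convexity of the complement of $\Omega$ in $V_n$, which is part of the hypothesis that $X_\Omega$ is concave). For a single vector $v$, recall $l_\Omega(v) = v\times p_v$, where $p_v\in\partial^+\Omega$ is a point such that $\Omega$ lies in the closed half-plane above the line through $p_v$ parallel to $v$; such a point is a support point of $\Omega$ in the direction determined by $v$. So the first step is to record the reformulation
$$l_\Omega(v) = \max_{p\in\Omega} \, v\times p,$$
valid when $v$ points in a direction for which this support functional is finite on $\Omega$ (which is exactly the range of directions occurring as edges of a concave integral path in $V_n$); the equality holds because $v\times p$ is an affine function of $p$, hence attains its maximum over the closed region $\Omega$ on $\partial\Omega$, and the concavity of $\partial^+\Omega$ together with the position of the axes forces that maximum to be attained at a point $p_v$ on $\partial^+\Omega$. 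This also re-proves the remark in Definition \ref{def:omegalength} that $v\times p_v$ is independent of the choice of $p_v$: all maximizers give the same value.

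Given this reformulation, the inequality is immediate: pick $p_{v_1}$ and $p_{v_2}$ realizing $l_\Omega(v_1)$ and $l_\Omega(v_2)$ respectively. Then
$$l_\Omega(v_1) + l_\Omega(v_2) = v_1\times p_{v_1} + v_2\times p_{v_2}.$$
Now I would compare this with $l_\Omega(v_1+v_2) = \max_{p\in\Omega}(v_1+v_2)\times p$. The subtle point is that $l_\Omega(v_1+v_2)$ uses a \emph{single} support point $p_{v_1+v_2}$ for the combined direction, so one cannot simply bound term by term in the naive direction; instead I would use that the cross product is bilinear, write $v_1\times p_{v_1} \le v_1\times p_{v_1+v_2}$ would be false in general, so the right move is the opposite: use that $p_{v_1+v_2}$ maximizes $(v_1+v_2)\times(\cdot)$ but $p_{v_i}$ maximizes $v_i\times(\cdot)$, hence $v_i\times p_{v_i} \ge v_i\times p_{v_1+v_2}$, and summing gives $l_\Omega(v_1)+l_\Omega(v_2) \ge (v_1+v_2)\times p_{v_1+v_2} = l_\Omega(v_1+v_2)$ — which is the \emph{wrong} direction. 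So in fact the correct argument must run through the geometry of the axes of $V_n$, not just the abstract support functional: the relevant fact is that for directions $v_1, v_2$ both lying in the cone of admissible edge directions, the support point $p_{v_1+v_2}$ lies on the arc of $\partial^+\Omega$ \emph{between} $p_{v_1}$ and $p_{v_2}$ (by concavity), and then a direct planar computation — splitting $v_1\times p_{v_1} + v_2\times p_{v_2} - (v_1+v_2)\times p_{v_1+v_2}$ and using that $p_{v_i} - p_{v_1+v_2}$ is a nonnegative combination of the edge directions — shows the difference is $\le 0$, i.e. $l_\Omega(v_1)+l_\Omega(v_2)\le l_\Omega(v_1+v_2)$.

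I expect the main obstacle to be exactly this sign bookkeeping: making precise the claim that $p_{v_1+v_2}$ lies on the sub-arc of $\partial^+\Omega$ determined by $p_{v_1}$ and $p_{v_2}$, and that on that sub-arc the vectors $p_{v_1}-p_{v_1+v_2}$ and $p_{v_2}-p_{v_1+v_2}$ have the correct orientation relative to $v_1$ and $v_2$ so that the two extra cross-product terms are nonnegative. This is where concavity of $\partial^+\Omega$ is used essentially — it is false for general $\Omega$. Once that orientation lemma is in hand (it is a one-line consequence of concavity plus the fact that $v_1, v_2$ point "downward-right" into the admissible cone), the inequality drops out by bilinearity of $\times$. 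A clean alternative, which I would mention as a remark, is to note that $v\mapsto l_\Omega(v)$ is exactly (the restriction to the admissible cone of) the support function of the convex set $\bar V_n\setminus\mathrm{int}(\Omega)$ composed with a rotation by $\pi/2$, and superadditivity of $l_\Omega$ is then the standard statement that support functions are subadditive — applied with the orientation reversed because $\Omega$ is the complement of the convex body, which accounts for the inequality going the "reverse" way compared to the usual triangle inequality.
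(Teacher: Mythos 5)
There is a genuine gap, and it sits exactly where you flagged your own discomfort. Your opening reformulation $l_\Omega(v)=\max_{p\in\Omega} v\times p$ cannot be the right one: any function of the form $v\mapsto\max_p (v\times p)$ is positively homogeneous and convex, hence \emph{sub}additive, so it is incompatible with the superadditivity \eqref{ine:inv.triangle.inequality} you are trying to prove --- as you yourself notice when the naive argument comes out with the inequality reversed. The correct reading of Definition \ref{def:omegalength} (forced by Example \ref{exa:ellipsoidaction} and by the later uses of this lemma) is that $p_v$ realizes the \emph{minimum} of the support functional $t\mapsto a(t)\times v$ along the parametrization $a:[0,1]\rightarrow V_n$ of $\partial^+\Omega$; the concavity hypothesis $a'\times a''<0$ is what guarantees that the unique interior critical point of this function is a minimum, with the boundary cases handled by the signs of $a'(0)\times v$ and $a'(1)\times v$. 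Once $l_\Omega(v)=\min_{t\in[0,1]}\bigl(a(t)\times v\bigr)$ is in hand, the inequality is one line: $l_\Omega(v_1)+l_\Omega(v_2)\leq a(t)\times v_1+a(t)\times v_2=a(t)\times(v_1+v_2)$ for every $t$, and taking the minimum over $t$ on the right gives \eqref{ine:inv.triangle.inequality}. This is precisely the paper's proof.

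Your proposed repair --- locating $p_{v_1+v_2}$ on the arc of $\partial^+\Omega$ between $p_{v_1}$ and $p_{v_2}$ and checking the signs of $v_i\times(p_{v_i}-p_{v_1+v_2})$ --- does not rescue the max formulation: the estimate you need, $v_i\times(p_{v_i}-p_{v_1+v_2})\leq 0$, is literally the statement that $p_{v_i}$ \emph{minimizes} $v_i\times(\cdot)$, so under the max convention both terms have the opposite sign and the difference you compute is $\geq 0$, not $\leq 0$; no amount of arc geometry changes this, and once the extremum is corrected the arc-location claim is not needed at all. Your closing remark --- that $l_\Omega$ is a support function of the convex complement of $\Omega$ in $V_n$ taken with reversed orientation, so that subadditivity of support functions becomes superadditivity of $l_\Omega$ --- is the right idea and is essentially equivalent to the paper's argument; had you led with that and pinned down the sign of the identification, the proof would be complete.
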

    \begin{proof}
    For simplicity lets suppose that $X_\Omega$ is smooth the non- smooth case is similar. Lets suppose that $\partial \Omega$ is parametrized by a function $a:[0,1]\rightarrow V_n$. Consider a  vector $v$ and an auxiliary function $f_v(t)=a\times v$. Lets check that the minimum value of $f_v$ is $l_\Omega(v)$. Notice that $f_v'(t_0)=0$ only if $v$ is parallel to $a'(t_0)$. We have two options $(i)$ there is $t_0$ such that $a'(t_0)$ is parallel to $v$ or $(ii)$ there is not such $t_0$. Lets first check $(i)$, notice that $t_0$ is unique, futhermore, $f(t_0)=l_\Omega(v)$. Since, $f_v''(t_0)=a''(t_0)\times v=-v \times a''(t_0)>0$ the value $l_\Omega(v)$ is the minumum of $f_v$. Now lets check $(ii)$, this would mean that $a'(0)\times v >0 $ or $a'(1)\times v <0$, in the first case the minimun is attained at $t=0$ and in the second case the minimum is attained at $t=1$, in both cases the value of $f_v$ is $l_\Omega(v)$.
    To proof of the inequality \ref{ine:inv.triangle.inequality}, notice that $l_{\Omega}(v_1)+l_\Omega(v_2)\leq f_{v_1+v_2}(t)$ for every $t\in [0,1]$. Therefore, the inequality follows. 
\end{proof}

For the next lemma we need to consider a different family of concave paths. 
\begin{defi}
    A \textit{real path} in $V_n$ is a piecewise linear continuos path $\Lambda$ with starting point in line $\{t(n,1):t\geq 0\}$ and end point in the $y$-axis such that every edge $v$ of $\Lambda$ is proportional to a primitive vector $(p,q)$. We say the real path $\Lambda$ is \textit{concave} if it lies above any of the tangent lines at its smooth points.
\end{defi}

\begin{lemm}
    \label{lem:generalized_triangle_ineq}
    Let $\Lambda$ and $\Lambda'$ be concave real paths. Lets suppose that $\Lambda$ has exactly two edges $w_1$ and $w_2$. Lets suppose that $\Lambda$ and $\Lambda'$ have the same endpoints. Then 
    $$l_\Omega(w_1)+l_\Omega(w_2)\leq l_\Omega(v_1)+\cdots +l_\Omega(v_m)$$
    Where $v_1,\dots v_m$ are the edges of $\Lambda'$.
    
\end{lemm}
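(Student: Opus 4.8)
The plan is to reduce the general statement to the previously established two-vector inequality \eqref{ine:inv.triangle.inequality} by an inductive argument on the number $m$ of edges of $\Lambda'$, using the concavity hypothesis to control how the edges of $\Lambda'$ sit relative to the single corner of $\Lambda$. First I would set up notation: let $w_1, w_2$ be the two edges of $\Lambda$, with common vertex $p$, and let $\Lambda'$ have edges $v_1,\dots,v_m$ read off in order from the starting ray $\{t(n,1)\}$ to the $y$-axis. Since $\Lambda$ and $\Lambda'$ share endpoints, we have $w_1 + w_2 = v_1 + \cdots + v_m$. The base case $m=1$ is immediate from \eqref{ine:inv.triangle.inequality} applied once (or is vacuous if $\Lambda=\Lambda'$), and the base case $m=2$ is exactly \eqref{ine:inv.triangle.inequality} together with the observation that a concave two-edge real path with the same endpoints as $\Lambda$ must have its edges ordered by slope the same way.

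For the inductive step, I would argue that because both $\Lambda$ and $\Lambda'$ are concave with the same endpoints, there is a prefix $v_1 + \cdots + v_j$ of the edges of $\Lambda'$ whose partial sum, viewed as a displacement from the common starting point, stays on the $w_1$ side of the corner $p$ — more precisely, such that $v_1 + \cdots + v_j$ and $w_1$ can be compared: either $\sum_{i\le j} v_i$ is a ``sub-segment'' reaching a point on the segment spanned by $w_1$, or it overshoots into $w_2$. Concretely, let $j$ be the largest index such that the endpoint of $v_1+\cdots+v_j$ (starting from $a(0)$) lies in the closed region bounded by $\Lambda$ below the corner; split $\Lambda'$ into $\Lambda'_{\le j}$ (edges $v_1,\dots,v_j$) and $\Lambda'_{>j}$ (edges $v_{j+1},\dots,v_m$). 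Then introduce the auxiliary vector $u = v_1 + \cdots + v_j$ and write $w_1 + w_2 = u + (v_{j+1} + \cdots + v_m)$. Apply \eqref{ine:inv.triangle.inequality} repeatedly (first collapsing $v_1,\dots,v_j$ to $u$, which gives $\sum_{i\le j} l_\Omega(v_i) \le l_\Omega(u)$ by induction/iteration, and similarly for the tail), reducing to a two-edge path $u, v_{j+1}+\cdots+v_m$ with the same endpoints as $\Lambda$; but this is again handled by the $m=2$ case, giving $l_\Omega(w_1) + l_\Omega(w_2) \le l_\Omega(u) + l_\Omega(v_{j+1}+\cdots+v_m)$. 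Chaining the inequalities yields the claim. Alternatively — and perhaps more cleanly — one simply iterates \eqref{ine:inv.triangle.inequality} directly: repeatedly combine adjacent edges of $\Lambda'$, at each step replacing $v_i, v_{i+1}$ by $v_i + v_{i+1}$ and only losing $\Omega$-length; after $m-2$ steps one reaches a two-edge path, and then one final application of the $m=2$ case (which needs concavity to know the two remaining edges are the ``right'' ones relative to $w_1, w_2$) completes the proof.

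The main obstacle I anticipate is the last step: after iterating \eqref{ine:inv.triangle.inequality} down to a two-edge concave path $u_1, u_2$ with $u_1 + u_2 = w_1 + w_2$, one must show $l_\Omega(w_1) + l_\Omega(w_2) \le l_\Omega(u_1) + l_\Omega(u_2)$, and this does \emph{not} follow from \eqref{ine:inv.triangle.inequality} alone — it genuinely uses that $\Lambda$ (the path $w_1, w_2$) is the ``most refined'' concave path with these endpoints, i.e. that $w_1, w_2$ are primitive-proportional edges whose corner is a lattice point, while $u_1, u_2$ are coarser. I would prove this sublemma by the same $f_v(t) = a(t) \times v$ device from the proof of \eqref{ine:inv.triangle.inequality}: the function $t \mapsto l_\Omega$-realizing point moves monotonically along $\partial^+\Omega$ as the slope of $v$ varies, so $l_\Omega(w_1) + l_\Omega(w_2)$ picks up the infimum over a finer decomposition and is therefore no larger. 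Concretely, one shows $l_\Omega(w_1) + l_\Omega(w_2) \le \min_t f_{u_1}(t) + \min_t f_{u_2}(t)$ by checking that the minimizing parameters for $w_1$ and $w_2$ bracket those for $u_1$ and $u_2$ in the correct order, using concavity of $a$ (i.e. $a' \times a'' < 0$ along $\partial^+\Omega$). Once this sublemma is in hand, the induction is routine bookkeeping.
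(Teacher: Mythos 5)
Your overall strategy (induct on $m$ and reduce everything to the two-vector inequality \eqref{ine:inv.triangle.inequality}) is the right one, and you correctly sense that a two-edge versus two-edge comparison cannot come from \eqref{ine:inv.triangle.inequality} alone. However, the reduction you propose runs the basic inequality in the wrong direction, and this breaks both of your variants. Inequality \eqref{ine:inv.triangle.inequality} says that merging two edges can only \emph{increase} $\Omega$-length: iterating it gives $\sum_{i\le j} l_\Omega(v_i) \le l_\Omega(u)$ for $u=v_1+\cdots+v_j$, so the collapsed two-edge path $(u_1,u_2)$ satisfies $l_\Omega(\Lambda') \le l_\Omega(u_1)+l_\Omega(u_2)$. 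Your sublemma would then give $l_\Omega(w_1)+l_\Omega(w_2)\le l_\Omega(u_1)+l_\Omega(u_2)$, and the two inequalities both point \emph{toward} the intermediate quantity $l_\Omega(u_1)+l_\Omega(u_2)$, so they do not chain into the desired lower bound on $l_\Omega(\Lambda')$. (In your ``cleaner'' variant you assert that merging edges ``only loses $\Omega$-length'', which is the reverse of \eqref{ine:inv.triangle.inequality}.) Since the goal places the finer path $\Lambda'$ on the \emph{larger} side, no amount of coarsening $\Lambda'$ can help; the argument must refine $\Lambda$ instead. A secondary issue: the extra input you identify for the two-edge case (primitivity of $w_1,w_2$, lattice-point corner, ``most refined'') is not the relevant condition. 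What matters is the relative position of the two paths --- the region cut out by $\Lambda$ and the axes is contained in the one cut out by $\Lambda'$ --- a hypothesis the lemma leaves implicit but which any proof must use, since without it the statement is already false by symmetry for two two-edge paths.

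The paper's proof performs exactly the ``refine $\Lambda$'' move and never compares two two-edge paths. With edges ordered by decreasing slope, it extends the penultimate edge $v_m$ of $\Lambda'$ to $v_m'=v_m+v_m''$ until it meets the segment $w_2$, thereby splitting $w_2=w_2'+w_2''$ into parallel pieces at the hit point. The inductive hypothesis is applied to the two-edge path $(w_1,w_2')$ against the $m$-edge path $(v_1,\dots,v_{m-1},v_m')$, while the leftover pieces satisfy $v_m''+w_2''=v_{m+1}$, so \eqref{ine:inv.triangle.inequality} is invoked in the correct, lower-bounding direction on $l_\Omega(v_{m+1})$. Summing, and using that $l_\Omega$ is additive on parallel vectors (so $l_\Omega(w_2)=l_\Omega(w_2')+l_\Omega(w_2'')$ and $l_\Omega(v_m')=l_\Omega(v_m)+l_\Omega(v_m'')$), closes the induction. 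To salvage your plan you would need to split $w_1,w_2$ along data coming from $\Lambda'$ rather than merging the $v_i$ --- at which point you have essentially reconstructed the paper's argument.
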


\begin{proof}
     Let's asumme that the edges $w_1$ and $w_2$ of $\Lambda$ are  ordered by decreasing slope. We argue by induction over the number of edges $m$ that $\Lambda'$ has. For $m=1$ the result follows directly from inequality \ref{ine:inv.triangle.inequality}. Lets suppose that the result is true for $m$ edges and lets prove it for $m+1$ edges. Lets $v_1,v_2,\cdots v_{m+1}$ be the edges of $\Lambda'$ ordered by decreasing slope. We extend $v_m$ to a vector $v_m'$ that hits $w_2$. The vector $w_2$ is then split in two parts, one part $w_2'$ that intersect $w_1$ and another part $w_2''$ that intersect with $v_{m+1}$. By induction hypothesis we have that 
     $$l_\Omega(w_1)+l_\Omega(w_2')\leq l_\Omega(v_1)+\cdots+ l_\Omega(v_{m-1}) + l_\Omega(v_m')$$
     Lets denote by $v_m''$ the vector $v_m'-v_m$. Then, inequality \ref{ine:inv.triangle.inequality} give us
     $$l_\Omega(v_m'')+l_\Omega(w_2'')\leq l_\Omega(v_{m+1}) $$
     Summing both inequatilities we obtain:
     $$l_\Omega(w_1)+l_\Omega(w_2)\leq l_\Omega(v_1)+\cdots+ l_\Omega(v_{m}) + l_\Omega(v_{m+1})$$
     Since $w_2', w''_2$ and $w_2$ are parallel and $v_m', v''_m$ and $v_n$ are parallel. This finish the inductive step and the proof. 
\end{proof}

\begin{col}
\label{col:cor-red-action}
    Let $X_\Omega$ be a concave toric domain in $M_n$. Suppose that $\Lambda$ is obtained from $\Lambda'$ by corrounding the corner. Then $l_\Omega(\Lambda)\leq l_\Omega(\Lambda')$.
\end{col}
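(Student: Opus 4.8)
# Proof Proposal for Corollary~\ref{col:cor-red-action}

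\textbf{Setup and strategy.} The plan is to reduce the corollary directly to Lemma~\ref{lem:generalized_triangle_ineq} by a vertex-by-vertex (or corner-by-corner) induction. Recall that $\Lambda$ is obtained from $\Lambda'$ by corounding a corner at a vertex $p$: we take the convex hull $C$ of the lattice points in the unbounded region of $\Lambda'$ with $p$ removed, and $\Lambda$ is the resulting concave integral path. Geometrically, corounding replaces the two edges of $\Lambda'$ adjacent to $p$ — call them $w_1$ (the one ending at $p$) and $w_2$ (the one starting at $p$) — by a new concave polygonal arc with the same endpoints, namely the portion of $\partial C$ between those two endpoints. All other edges of $\Lambda'$ are unchanged. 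So it suffices to compare the $\Omega$-length of the two-edge path $w_1 * w_2$ against the $\Omega$-length of the replacement arc.

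\textbf{Key steps.} First I would observe that $l_\Omega(\Lambda) = \sum_{v \in \mathrm{Edges}(\Lambda)} l_\Omega(v)$ and likewise for $\Lambda'$ (this is just Definition~\ref{def:omegalength}), so the inequality $l_\Omega(\Lambda) \le l_\Omega(\Lambda')$ is additive over the parts of the path that change. Second, since the edges outside the corner at $p$ are common to $\Lambda$ and $\Lambda'$, it is enough to prove
$$
\sum_{v \in \mathrm{Edges}(\Lambda),\ v \text{ new}} l_\Omega(v) \;\le\; l_\Omega(w_1) + l_\Omega(w_2).
$$
Third — and this is the crux — I would apply Lemma~\ref{lem:generalized_triangle_ineq} with the roles swapped relative to how one might first read it: the two-edge concave path here is $w_1 * w_2$ (concave because $\Lambda'$ is a concave integral path and $w_1, w_2$ are consecutive edges, so they are ordered by decreasing slope), and the many-edge concave path $\Lambda'$-of-the-lemma is the new arc of $\partial C$. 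Both have the same endpoints by construction of the convex hull. Lemma~\ref{lem:generalized_triangle_ineq} then gives exactly
$$
l_\Omega(w_1) + l_\Omega(w_2) \;\le\; l_\Omega(v_1) + \cdots + l_\Omega(v_m),
$$
where $v_1, \dots, v_m$ are the edges of the new arc. Wait — this is the wrong direction; I need the reverse. Let me reconsider: in fact the new arc lies \emph{closer} to the axes (it is the hull with $p$ removed, so it is "below" the old corner), and concavity of $w_1*w_2$ together with the inverse triangle inequality \eqref{ine:inv.triangle.inequality} should give that subdividing/lowering \emph{decreases} length, not increases it. The correct application is: apply \eqref{ine:inv.triangle.inequality} repeatedly to combine the edges $v_1, \dots, v_m$ of the new arc back up into the two edges $w_1, w_2$, each combination only increasing $l_\Omega$, yielding $\sum l_\Omega(v_i) \le l_\Omega(w_1) + l_\Omega(w_2)$, hence $l_\Omega(\Lambda) \le l_\Omega(\Lambda')$.

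\textbf{Main obstacle.} The delicate point is checking that the replacement arc produced by corounding is genuinely a concave real path with the same two endpoints as $w_1 * w_2$, and that one can iterate \eqref{ine:inv.triangle.inequality} (or invoke Lemma~\ref{lem:generalized_triangle_ineq} in the form where the \emph{two-edge} path is the longer one) in the direction that gives a decrease. One must be careful that removing a single lattice point $p$ from the hull changes only the two adjacent edges and inserts a concave chain strictly on the axis side of $p$; this uses that $\Lambda'$ was already concave, so $p$ was a genuine corner and the hull of $A \setminus \{p\}$ agrees with $A$ outside a neighborhood of $p$. Once that local picture is pinned down, the length comparison is immediate from the already-proven triangle inequalities, and the general case (corounding possibly several corners) follows by iterating the one-corner case.
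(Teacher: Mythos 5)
Your reduction is the same as the paper's: strip the edges common to $\Lambda$ and $\Lambda'$, so that the comparison localizes to the two edges $w_1,w_2$ meeting at the removed vertex $p$ versus the replacement arc $v_1,\dots,v_m$ on the boundary of the convex hull, and then invoke Lemma~\ref{lem:generalized_triangle_ineq}. The problem is that you have swapped the roles of $\Lambda$ and $\Lambda'$. In the paper's definition of corounding, $\Lambda$ is the path that \emph{has} the corner at $p$ (it contains $w_1$ and $w_2$), and $\Lambda'$ is the hull-boundary replacement carrying the $v_i$; ``corounding'' is the inverse of rounding, i.e.\ it \emph{inserts} the corner (this is also how the corollary is used in the proof of Lemma~\ref{lem:capupperbound}, where removing a corner increases the length). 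Consequently the inequality to be proved is
\[
l_\Omega(w_1)+l_\Omega(w_2)\;\le\;l_\Omega(v_1)+\cdots+l_\Omega(v_m),
\]
which is verbatim the conclusion of Lemma~\ref{lem:generalized_triangle_ineq} applied to the two-edge concave path $w_1*w_2$ and the many-edge concave path given by the replacement arc (same endpoints by construction of the hull). Your first application of the lemma --- the one you then discard as ``the wrong direction'' --- is exactly the paper's proof.

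The ``corrected'' argument you substitute for it is not salvageable. The reverse inequality $\sum_i l_\Omega(v_i)\le l_\Omega(w_1)+l_\Omega(w_2)$ is false in general: it contradicts Lemma~\ref{lem:generalized_triangle_ineq}, and it already fails in the simplest instance where the replacement arc is a single segment representing $w_1+w_2$, since the superadditivity \eqref{ine:inv.triangle.inequality} gives $l_\Omega(w_1)+l_\Omega(w_2)\le l_\Omega(w_1+w_2)$ and not the other way around. Moreover the proposed mechanism --- repeatedly combining the edges $v_1,\dots,v_m$ ``back up into'' $w_1$ and $w_2$ --- would require the $v_i$ to partition into two groups summing separately to $w_1$ and to $w_2$, which does not happen for a generic corounding (and when it does, you only recover an inequality in the direction opposite to the one you want). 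So as written the proof concludes through a false intermediate claim; restoring the paper's labelling of $\Lambda$ versus $\Lambda'$ turns your first computation into a complete and correct proof.
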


\begin{proof}
    Let suppose that $\Lambda$ is obtained from $\Lambda'$ by corounding the corner. Notice that we can suppose that we have removed every common edge until $\Lambda$ consist of exactly two edges. This leave us with the same conditions of Lemma \ref{lem:generalized_triangle_ineq}. The results follows. 
\end{proof}

\begin{figure}
    \centering
    \input{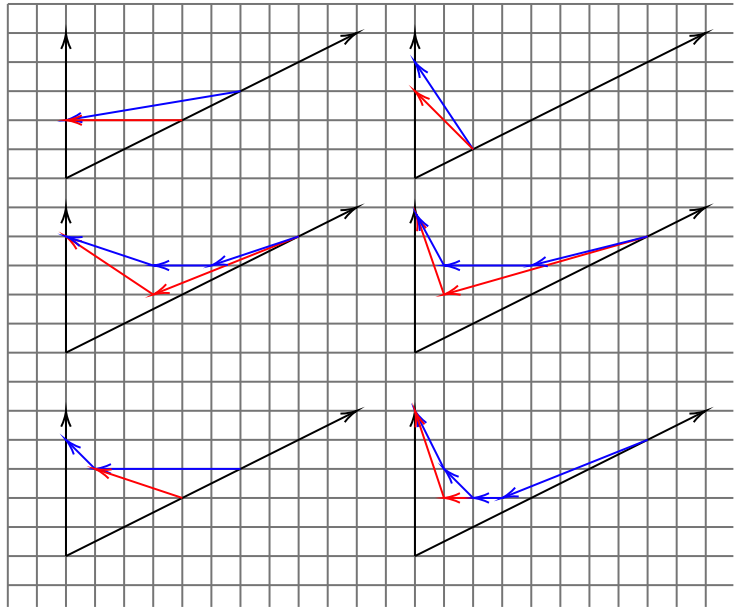}
    \caption{Examples of corounding the corner in $L(2,1)$. The paths $\Lambda'$ and $\Lambda$ are represented by blue and red respectively.}
    \label{Coroundingexamples}
\end{figure}
\section{Foundations of Embedded Contact Homology}
\label{ECHfoundations}

Let $Y$ be a  closed contact $3$-manifold with a contact form $\lambda$, that is,  $\lambda$ is a $1$-form such that $\lambda\wedge\dd \lambda>0$, and let $\xi=\ker \lambda$ be the contact structure. The Reeb vector field $R_{\lambda}$ is the unique vector field in $Y$ satisfying:
\begin{align*}
    \iota_{R_{\lambda}}\dd\lambda=0  \text{ and } & \lambda(R_\lambda)=1 
\end{align*}

We denote by $\phi_t$ the flow of $R_{\lambda}$ which is usually called the Reeb flow. A closed orbit of $\phi_t$ is called a Reeb orbit. A Reeb orbit $\gamma:\mathbb{R}/T\mathbb{Z}\rightarrow Y$ with period $T>0$ is nondegenerate when the linearized return map $P_\gamma:=\dd \phi_T|_\xi: \xi_{\gamma(0)}\rightarrow \xi_{\gamma(0)}$ does not admit $1$ as an eigenvalue. The contact form $\lambda$ is nondegenerate if all Reeb orbits are nondegenerate. Suppose that $\lambda$ is nondegenerate. Since $P_\lambda$ is a linear symplectomorphism, it turns out that the Reeb vector field admits three types of closed orbits:

\begin{enumerate}
    \item \textit{Elliptic}: orbits $\gamma$ such that the eigenalues of the linearized return map $P_\gamma$ are norm one complex numbers.
    \item \textit{Positive hyperbolic}: when the eigenvalues of $P_\gamma$ are positive real numbers.
    \item \textit{Negative hyperbolic}: When eigenvalues of $P_\gamma$ are negative real numbers. 
\end{enumerate}

An orbit set $\alpha=\{(\alpha_i,m_i)\}$ is a finite set, where $\alpha_i$ are distinct embedded Reeb orbits on $Y$ and $m_i$ are positive integers. An admissibe orbit set is an orbit set such that $m_i=1$ whenever $\alpha_i$ is hyperbolic. We denote the homology class of an orbit set $\alpha$ by 
$$[\alpha]=\displaystyle\sum m_i [\alpha_i]\in H_1(Y)$$
For a fixed $\Gamma\in H_1(Y)$, and a generic almost complex structure $J$ on $\mathbb{R}\times Y$ compatible with its symplectic structure, the chain complex $\text{ECC}_*(Y,\lambda,\Gamma,J)$ is the $\mathbb{Z}_2$-vector space generated by the admissible orbits set in homology class $\Gamma$, and its differential counts certain $J$-holomorphic curves in $\mathbb{R}\times Y$, as explained below. This chain complex gives rise to the embedded contact homology $\text{ECH}_*(Y,\lambda,
\Gamma,J)$. Taubes proved \cite{TaubesECHSW}  that $\text{ECH}_*(Y,\lambda,
\Gamma,J)$ is isomorphic to a version of Seiberg-Witten Floer cohomology $\Hat{HM}^{-*}(Y,\mathfrak{s}_\xi+PD(\Gamma))$. In particular, $\text{ECH}_*(Y,\lambda,\Gamma,J)$ does not depend on $\lambda$ or $J$, and so write $ECH_*(Y,\xi,\Gamma)$.

In the next sections we will need to consider relative homological classes that relate two different orbit set $\alpha$ and $\beta$ with the same homology. To be precise we denote by $H_2(Y,\alpha,\beta)$ the affine space over $H_2(Y)$ that consist of $2$-chains $\Sigma$ in $Y$ with 
$$\partial \Sigma=\displaystyle\sum_im_i\alpha_i-\displaystyle\sum_jn_j\beta_j$$
modulo boundaries of $3$-chains. We call $H_2(Y,\alpha,\beta)$ the \textit{relative second homology of $\alpha$ and $\beta$}.

Several of the definitions of ECH are a bit delicate. Because of that we dedicate some more  subsections to properly define the different parts that constitute 
 this homology.

\subsection{The \text{ECH} index}

We denote by $H_2(Y,\alpha,\beta)$ the affine space over $H_2(Y)$ that consist of $2$-chains $\Sigma$ in $Y$ with 
$$\partial \Sigma=\displaystyle\sum_im_i\alpha_i-\displaystyle\sum_jn_j\beta_j$$
modulo boundaries of $3$-chains. We call $H_2(Y,\alpha,\beta)$ the \textit{relative second homology of $\alpha$ and $\beta$}.

In this section we define the ECH index which is an interger number associated to a pair of Reeb sets with the same homology and a relative homological class of these two Reeb sets. The ECH index gives the gradding of the embedded contact homology. An interesting feature of the ECH index is that it is the sum of three terms that we define below, the Relative Conley-Zenhder index, the relative Chern Class and the relative intersection number, each of these terms depend on the trivialization of the contatc structure over the Reeb orbits, but the ECH index itself does not.  

Since trivialization play an essential role in the concepts we will introduce, we need to add some notations. We denote by $\mathcal{T}(\gamma)$ the set of homotopy classes of symplectic trivialization of $\xi|_{\gamma}$. This is an affine space over $\mathbb{Z}$: given two trivializations $\tau_1,\tau_2:\xi|_{\gamma}\rightarrow \mathbb{S}^1\times \mathbb{R}^2$, we denote by $\tau_1-\tau_2$ the degree of $\tau_1\circ \tau_2^{-1}:\Sp^1\rightarrow \text{Sp}(2,\mathbb{R})\cong \mathbb{S}^1$. Let $\alpha=\{(\alpha_1,m_i)\},\beta=\{(\beta_j,n_j)\}$ be two orbit sets. If $\tau\in \mathcal{T}(\alpha,\beta):=\prod_i\mathcal{T}(\alpha_i)\times \prod_j \mathcal{T}(\beta_j)$, the elements of $\mathcal{T}(\alpha_i)$ and $\mathcal{T}(\beta_j)$ are denoted by $\tau_i^+$ and $\tau_j^-$.

\subsubsection{Conley-Zenhder index}
\label{Conley-Zenhder}

Now we define the relative Conley-Zenhder index which roughly counts how much the contact structure turn near a Reeb orbit with respect to a certain trivialization. Let $\gamma:\mathbb{R}/T\mathbb{Z}\rightarrow Y$ be a parametrized Reeb and  $\tau$ a trivialization of $\gamma$. If $\phi_t$ is the Reeb flow, the derivative
$$\dd\phi_t:T_{\gamma(0)} Y\rightarrow T_{\gamma(t)}Y$$
restricts to a linear symplectomorphism $\psi_t:\xi_{\gamma(0)}\rightarrow \xi_{\gamma(t)}$. Using the trivialization $\tau$, the later can be viewed as a $2\times 2$ symplectic matrix for each $t$. Since $\lambda$ is nondegenerate, this give rise to a path of symplectic matrices starting at the identity $I_{2\times 2}$ and ending at the linearized return map $\psi_T=P_{\gamma}$, which does not have $1$ as an eigenvalue. So the Conley-Zehnder index $CZ_\tau(\gamma)\in \mathbb{Z}$ is defined as the Conley-Zehnder index of the path $\{\psi_t\}_{t\in [0,T]}$. In dimension four this index can be explicity defined as follows. 
If $\gamma$ is hyperbolic, let $v\in \mathbb{R}^2$ be an eigenvector of $P_\gamma$, then the family of vectors $\{\Psi_t(v)\}_{t\in [0,T]}$  rotates by angle $\pi k$ for some integer $k$ (which is even in the positive hyperbolic case and odd in the negative hyperbolic case), and
\begin{equation}
    \label{eq:CZhyperbolic}
    \text{CZ}_{\tau}(\gamma)=k    
\end{equation}

If $\gamma$ is elliptic, then we can change the trivialization so that each $\psi_t$ is rotation by angle $2\pi \theta_t\in \mathbb{R}$ where $\theta_t$ is a continuous function of $t\in [0,T]$ and $\theta_0=0$. The number $\theta=\theta_T\in \mathbb{R}/\mathbb{Z}$ is called the `rotation angle' of $\gamma$ with respect to $\tau$, and 
\begin{equation}
    \label{eq:CZelliptic}
    \text{CZ}_\tau(\gamma)=2 \lfloor \theta\rfloor+1
\end{equation}
If one changes the trivialization $\tau$ by another $\tau'$, the Conley-Zehnder index changes in the following way:
$$\text{CZ}_\tau(\gamma^k)-\text{CZ}_\tau'(\gamma^k)=2k(\tau-\tau')$$

\subsubsection{Relative first Chern class}

Let $Z\in H_2(Y,\alpha,\beta)$ and $\tau\in \mathcal{T}(\alpha,\beta)$. Given a surface $S$ with boundary and a smooth map $f:S\rightarrow Y$ representing $Z$, the relative first Chern class $c_\tau(Z)=c_1(\xi|_{f(s)},\tau)\in \mathbb{Z}$ is defined as the signed count of zeros of a generic section $\phi$ of $f^*\xi$ that is trivial with respect to $\tau$.

The function $c_\tau$ is linear relative to the homology class, that is If $Z\in H_2(Y,\alpha,\beta)$ and $Z'\in H_2(Y,\alpha',\beta')$ then
\begin{align}
    c_\tau(Z+Z')=c_\tau(Z)+c_\tau(Z')
\end{align}

Moreover, if we change the trivialization $\tau$ by a trivialization $\tau'$, then 
\begin{align}
    c_\tau(Z)-c_{\tau'}(Z')=\displaystyle\sum_im_i({\tau'}_i^+-\tau_i^+)-\displaystyle\sum_j n_j ({\tau'}_j^--\tau_j^-)
\end{align}

\subsubsection{Relative intersection number}
    Let $\pi_Y:\mathbb{R}\times Y\rightarrow Y$ denote the projection and take a smooth map $f:S\rightarrow [-1,1]\times Y$, where $S$ is compact oriented surface with boundary, such that $f|_{\partial S}$ consists of positively oriented covers of $\{1\}\times \alpha_i$ with multiplicity $m_i$ and negatively oriented covers of $\{-1\}\times \beta_j$ with multiplicity $n_j$, $\pi_Y\circ f$ represents $Z$, the restriction $f|_{\Dot{S}}$ to the interior of $S$ is an embedding, and $f$ is transverse to $\{-1,1\}\times Y$. Such an $f$ is called an \textit{admissible representative} for $Z\in H_2(Y,\alpha,\beta)$ and we abuse notation by denoting this representative as $S$. Futhermore, suppose that $\pi_Y|_S$ is an immersion near $\partial S$ and $S$ contains $m_i$ (resp. $n_j$) singly convered circles at $\{1\}\times \alpha_i$ (resp. $\{-1\}\times \beta_j$), given by projecting conormal vectors in $S$, are $\tau$-trivial. Moreover, in each fiber $\xi$ over $\alpha_i$ or $\beta_j$, these sections lie in distinct rays. Then $S$ is a $\tau$-\textit{representative}.

Let $\tau\in \mathcal{T}(\alpha\cup\alpha',\beta\cup\beta')$ be a trivialization and $S, S'$ be $\tau$-representatives of $Z\in H_2(Y,\alpha,\beta)$ and $Z'\in H_2(Y,\alpha',\beta')$ respectively, such that the projected conormal vectors at the boundary all lie in different rays. Then $Q_\tau(Z,Z')\in \mathbb{Z}$  is the signed count of (transverse) intersections of $S$ and $S'$ in $(-1,1)\times Y$. Also $Q_\tau$ is quadratic in the following sense
\begin{equation}
    Q_\tau(Z+Z')=Q_\tau(Z)+2Q_\tau(Z,Z')+Q_\tau(Z')
\end{equation}
If $Z=Z'$ we write $Q_\tau(Z):=Q_\tau(Z,Z)$. It can be proven (see for example \cite{hutchings2014lecture}) that 
\begin{equation}
    \label{relativeQtau}
    Q_\tau(Z)=c_1(N,\tau)-w_\tau(S)
\end{equation}
where $c_1(N,\tau)$, the \textit{relative Chern number of the normal bundle}, is a signed count of zeros of a generic section of $N|_S$ such that the restriction of this section to $\partial S$ agrees with $\tau$; note that the normal bundle $N$ can be canonically identified with $\xi$ along $\partial S$. Meanwhile, the term $w_\tau(S)$, the \textit{asymptotic writhe}, is defined by using the trivialization $\tau$ to identify a neighborhood of each Reeb orbit with $\Sp^1\times D^2\subset \mathbb{R^3}$, and then computing the writhe at $s>>0$ slice of $S$ near the boundary using this identification.

Finally, if $Z,Z'\in H_2(Y,\alpha,\beta)$, changing the trivialization yields
\begin{equation}
    Q_\tau(Z,Z')-Q_\tau(Z,Z')=\displaystyle\sum_i m_i^2 ({\tau'}_i^+-\tau_i^+) -\displaystyle\sum_j n_j^2({\tau'}_j^+-\tau_j^+)
\end{equation}

With the above definitions in place we can properly define the ECH index

\begin{defi}
    Let $\alpha=\{(\alpha_i,m_i)\},\beta=\{(\beta_j,n_j)\}$ be two orbit sets in the homology class $\Gamma$ and $Z\in H_2(Y,\alpha,\beta)$. The $\text{ECH}$ index is defined by 
    \begin{equation}
        \label{ECHindex}
        I(\alpha,\beta,Z)=c_\tau(Z)+Q_\tau(Z)+\text{CZ}_\tau^I(\alpha)-CZ_\tau^I(\beta)
    \end{equation}
    where $\text{CZ}^I_\tau(\alpha)=\sum_i\sum_{k=1}^{m_i}\text{CZ}_\tau(\alpha_i^{k_i})$ and similarly for $\text{CZ}^I_\tau(\beta)$.
\end{defi}

    \begin{pro}\cite[Sec.~3.4]{hutchings2014lecture}
        \label{IndexECHproper}
        The $\text{ECH}$ index has the following properties:
        \begin{enumerate}[a)]
            \item (\textit{Well defined}) $I(\alpha,\beta,Z)$ does not depend on $\tau$, although each term of the formula does.
            \item (\textit{Additivity}) $I(\alpha,\beta,Z+W)=I(\alpha,\delta,Z)+I(\delta,\beta,W)$, whenever $\delta$ is another orbit set in $\Gamma$, $Z\in H_2(Y,\alpha,\beta)$ and $W\in H_2(Y,\delta,\beta)$.
            \item (\textit{Index parity}) If $\alpha$ and $\beta$ are chain complex generators, then 
            $$(-1)^{I(Z)}=\epsilon(\alpha)\epsilon(\beta)$$
            where $\epsilon(\alpha)$ denotes minus to the number of positive hyperbolic orbits in $\alpha$ and similarly $\epsilon(\beta)$.
            \item (\textit{Index ambiguity Formula}) $I(\alpha,\beta,Z)-I(\alpha,\beta,Z')=\langle c_1(\xi)+2 \text{PD}(\Gamma),Z-Z' \rangle$ where $c_1(\xi)$ is the first Chern class of the vector bundle $\xi$ and $\text{PD}$ denotes the Poincare dual.
        \end{enumerate}
    \end{pro}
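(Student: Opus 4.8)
The plan is to obtain all four items from how the three constituents $c_\tau$, $Q_\tau$ and $\text{CZ}^I_\tau$ of \eqref{ECHindex} transform under change of trivialization and under concatenation of relative homology classes; this is the route of \cite[Sec.~3.4]{hutchings2014lecture}, which I would follow, so I only indicate the mechanism. For (a), fix $\tau,\tau'\in\mathcal{T}(\alpha,\beta)$ and set $\eta_i={\tau}^+_i-{\tau'}^+_i$, $\eta^-_j={\tau}^-_j-{\tau'}^-_j$. Substituting the three displayed change-of-trivialization formulas into \eqref{ECHindex}, the total coefficient of $\eta_i$ in $I(\alpha,\beta,Z)-I(\alpha,\beta,Z)$ (computed with $\tau$ versus $\tau'$) coming from the orbit $\alpha_i$ is
\[
-m_i-m_i^2+\sum_{k=1}^{m_i}2k=-m_i-m_i^2+m_i(m_i+1)=0,
\]
and the coefficient of $\eta^-_j$ coming from $\beta_j$ is $n_j+n_j^2-n_j(n_j+1)=0$; hence $I$ is trivialization independent, while manifestly none of $c_\tau,Q_\tau,\text{CZ}^I_\tau$ is.

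Items (b) and (d) are then bookkeeping once a trivialization $\tau$ defined simultaneously over $\alpha$, $\beta$ and $\delta$ is fixed. In (b), the term $\text{CZ}^I_\tau(\alpha)-\text{CZ}^I_\tau(\beta)$ telescopes through $\delta$, $c_\tau(Z+W)=c_\tau(Z)+c_\tau(W)$ by linearity of $c_\tau$ in the relative class, and $Q_\tau(Z+W)=Q_\tau(Z)+Q_\tau(W)$ once $Z$ is represented by a $\tau$-representative supported in the lower half of the symplectization (meeting $\alpha$ at the positive end and $\delta$ at the slice $\{0\}\times Y$) and $W$ by one supported in the upper half, matched along $\{0\}\times\delta$: then $Q_\tau(Z,W)=0$, the concatenation represents $Z+W$, and the quadratic relation for $Q_\tau$ gives additivity. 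Summing the three gives (b). For (d), write $A=Z-Z'\in H_2(Y)$; linearity gives $c_\tau(Z)-c_\tau(Z')=c_\tau(A)=\langle c_1(\xi),A\rangle$, the quadratic relation gives $Q_\tau(Z)-Q_\tau(Z')=2Q_\tau(Z',A)+Q_\tau(A)=\langle 2\text{PD}(\Gamma),A\rangle$ (since $Q_\tau(A)=A\cdot A=0$ on a $3$-manifold and $2Q_\tau(Z',A)$ computes twice the intersection of a representative of $Z'$ with a closed surface dual to $A$, namely $\langle 2\text{PD}(\Gamma),A\rangle$), and the Conley--Zehnder terms do not change; adding these produces $I(\alpha,\beta,Z)-I(\alpha,\beta,Z')=\langle c_1(\xi)+2\text{PD}(\Gamma),Z-Z'\rangle$.

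Item (c) is the step I expect to be the only real obstacle. By (d) and the reduction $c_1(\xi)\bmod 2=w_2(\xi)=w_2(TY)=0$, the difference $I(\alpha,\beta,Z)-I(\alpha,\beta,Z')$ is always even, so $(-1)^{I(\alpha,\beta,Z)}$ is independent of $Z$; since $\epsilon(\alpha)\epsilon(\beta)$ is too, it suffices to evaluate $I\bmod 2$ for one convenient pair $(Z,\tau)$. From \eqref{eq:CZhyperbolic} and \eqref{eq:CZelliptic}, $\text{CZ}_\tau(\gamma)$ is odd on elliptic and on negative hyperbolic orbits and even on positive hyperbolic ones, and $\sum_{k=1}^{m}\text{CZ}_\tau(\gamma^k)\equiv m\pmod 2$ for $\gamma$ elliptic, so $\text{CZ}^I_\tau(\alpha)-\text{CZ}^I_\tau(\beta)\bmod 2$ is explicit in terms of the multiplicities of the elliptic orbits and the numbers of negative hyperbolic orbits. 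The crux is then to show that $c_\tau(Z)+Q_\tau(Z)\bmod 2$ cancels everything except the positive hyperbolic count; I would do this from $Q_\tau(Z)=c_1(N,\tau)-w_\tau(S)$ for a $\tau$-representative $S$, reading the parity of $c_\tau(Z)+c_1(N,\tau)$ off $\chi(S)$ together with the number and type of ends, and the parity of the asymptotic writhe $w_\tau(S)$ off the rotation/linking data of those ends (elliptic versus hyperbolic, and for iterated elliptic ends the combinatorics of $\lfloor k\theta\rfloor$). Assembling these parity counts yields $(-1)^{I(\alpha,\beta,Z)}=\epsilon(\alpha)\epsilon(\beta)$, which is (c).
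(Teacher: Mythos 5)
The paper does not actually prove this proposition; it is quoted verbatim from \cite[Sec.~3.4]{hutchings2014lecture}, so there is no in-paper argument to compare against, and your proposal is in effect a reconstruction of the standard proof from that reference. Parts (a), (b) and (d) are correct and essentially complete at this level of detail: the coefficient computation $-m_i-m_i^2+m_i(m_i+1)=0$ (and its analogue for the $\beta_j$) is exactly the right cancellation given the three change-of-trivialization formulas in the paper; the telescoping of $\mathrm{CZ}^I_\tau$, the linearity of $c_\tau$, and the vanishing of the cross term $Q_\tau(Z,W)$ for stacked representatives give (b); and the identifications $c_\tau(A)=\langle c_1(\xi),A\rangle$, $Q_\tau(A)=0$, $Q_\tau(Z',A)=\langle \mathrm{PD}(\Gamma),A\rangle$ for $A=Z-Z'\in H_2(Y)$ give (d). (Minor wording slip: $A$ \emph{is} the closed surface class; $Q_\tau(Z',A)$ is the intersection of a generic slice of a representative of $Z'$, a $1$-cycle in class $\Gamma$, with $A$.)

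The genuine gap is in (c). Your reduction is sound: by (d) and $c_1(\xi)\equiv w_2(\xi)=w_2(TY)=0 \pmod 2$, the parity of $I$ is independent of $Z$ and $\tau$, so one evaluation suffices; and the mod-$2$ bookkeeping of $\mathrm{CZ}^I_\tau$ (odd for each elliptic iterate, even for all positive hyperbolic iterates, parity of $k$ for the $k$-th iterate of a negative hyperbolic orbit) is straightforward. But the remaining claim --- that $c_\tau(Z)+Q_\tau(Z)\bmod 2$ cancels the elliptic and negative hyperbolic contributions and leaves exactly the count of positive hyperbolic orbits --- is the entire content of index parity, and you only describe how you \emph{would} obtain it. Carrying it out requires the relative adjunction formula $c_\tau(Z)=\chi(S)+Q_\tau(Z)+w_\tau(S)-2\delta(S)$ for an embedded representative $S$, so that $c_\tau+Q_\tau\equiv\chi(S)+w_\tau(S)\pmod 2$, followed by an explicit parity computation of $\chi(S)$ in terms of the number of ends and of $w_\tau(S)$ in terms of the braids at the ends (for an end consisting of $m_i$ parallel pushoffs of $\alpha_i$ the writhe contributes $m_i(m_i-1)$ times a rotation count, whose parity differs between the elliptic and hyperbolic cases). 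None of this assembly is performed, and it is not a formality: the cancellation depends on choosing the representative's ends compatibly with the trivialization and on the specific form of $\mathrm{CZ}_\tau$ on iterates. As written, (c) is a plan rather than a proof; either execute this computation for one convenient $(Z,\tau,S)$ or cite \cite[Prop.~1.6(c)]{hutchings2002index} for this step explicitly.
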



\subsection{Fredholm index, Differential and Grading}
While the ECH index gives the gradding of the embedded contact homology, the Fredholm index gives the dimension of the moduli space of the J-holomorphic currents that we are intereted in counting. For a generic almost complex structure $J$ the Fredholm index of a $J$-holomorphic curve $C$ is defined as (see \cite[Sec~3.2]{hutchings2014lecture} for details): 
\begin{align}
    \label{FredholmIndex}
    \text{ind}(C)=-\chi(C)+2 c_\tau(C)+\displaystyle\sum_{i=1}^k \text{CZ}_\tau(\gamma_i^+)-\displaystyle\sum_{j=1}^l\text{CZ}_\tau(\gamma_j^-)
\end{align}
where $\chi(C)$ denotes the Euler characteristic of the $J$-holomorphic curve $C$ with $k$ positive ends at the Reeb orbits $\gamma_1^+\dots\gamma_k^+$ and the $l$ negative ends at Reeb orbits $\gamma_1^-\dots\gamma_k^-$.

The proposition below relates the ECH index with the Fredholm index when the ECH index is one or two. This relationship is one of the important step towards the definition of the differential map of the embedded contact and it also allow us to consider the U-map which we define in section \ref{Umap}. Here a trivial cylinder is $\mathbb{R}\times \gamma$, where $\gamma$ is a Reeb orbit.

\begin{pro}{\cite[Prop~3.1]{hutchings2014lecture}}
\label{BasedDiff}
Suppose $J$ is generic. Let $\alpha$ and $\beta$ be orbit sets and let $\mathcal{C}\in \mathcal{M}(\alpha, \beta)$ be any $J$-holomorphic current in $\mathbb{R}\times Y$, not necessarily somewhere injective. Then
\begin{enumerate}
    \item $I(\mathcal{C})\geq 0$, with equality if and only if $\mathcal{C}$ is a union of trivial cylinders with multiplicites. 
    \item If $I(\mathcal{C})=1$ then $\mathcal{C}=\mathcal{C}_0\sqcup C_1$, where $I(\mathcal{C}_0)=0$, and has $\text{ind}(C_1)=I(C_1)=1$.
    \item If $I(\mathcal{C})=2$, and $\alpha$ and $\beta$ are chain complex generators, then $\mathcal{C}=\mathcal{C}_0\sqcup C_2$, where $I(\mathcal{C}_0)=0$, and has $\text{ind}(C_1)=I(C_1)=2$.
\end{enumerate}
\end{pro}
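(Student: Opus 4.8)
The plan is to reduce the proposition to two structural facts from the foundational theory of $J$-holomorphic curves in symplectizations, as developed in \cite{hutchings2014lecture} --- the \emph{ECH index inequality} for somewhere injective curves and a \emph{superadditivity} estimate for the ECH index of currents --- and then to run a short case analysis on the value of $I(\mathcal{C})$. Throughout I fix one trivialization $\tau$ of $\xi$ over all the relevant Reeb orbits; this is harmless, since every inequality below compares quantities computed with the same $\tau$, and $I$ itself is independent of $\tau$ by Proposition \ref{IndexECHproper}(a).

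The first input, the ECH index inequality, states that for a somewhere injective $J$-holomorphic curve $C$ with positive and negative ends at (possibly multiply covered) Reeb orbits,
$$\operatorname{ind}(C)+2\delta(C)\le I(C),$$
where $\delta(C)\ge 0$ is the algebraic count of the singularities of $C$ together with the ``hidden'' double points that its ends are forced to carry, with equality precisely when $C$ is embedded and the multiplicities of its ends obey the ECH partition conditions. I would derive this from the identity \eqref{relativeQtau}, $Q_\tau(C)=c_1(N,\tau)-w_\tau(C)$, combined with the adjunction formula for $J$-holomorphic curves --- which bounds $c_1(N,\tau)$ in terms of $\chi(C)$, $\delta(C)$ and a writhe contribution at the ends --- and with the writhe bound, which estimates the asymptotic writhe $w_\tau(C)$ at each end by the rotation-number data of Section \ref{Conley-Zenhder}, with equality exactly when the partition condition holds. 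Substituting these into the formulas \eqref{FredholmIndex} and \eqref{ECHindex} for $\operatorname{ind}$ and $I$ yields the inequality and identifies its equality case.

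The second input concerns a current $\mathcal{C}=\sum_a d_a C_a$ with the $C_a$ distinct irreducible somewhere injective curves: expanding $I$ into its three pieces --- $c_\tau$ is linear in the relative homology class, so it contributes $\sum_a d_a\,c_\tau(C_a)$; $Q_\tau$ is quadratic, so it contributes $\sum_a d_a^2\,Q_\tau(C_a)+2\sum_{a<b} d_a d_b\,Q_\tau(C_a,C_b)$ with each cross term $Q_\tau(C_a,C_b)\ge 0$ by positivity of intersections of distinct irreducible $J$-curves; and the $\operatorname{CZ}^I_\tau$ terms satisfy the convexity inequality for sums of Conley--Zehnder indices under passing to covers --- one obtains
$$I(\mathcal{C})\ \ge\ \sum_a d_a\bigl(\operatorname{ind}(C_a)+2\delta(C_a)\bigr)\ +\ (\text{nonnegative terms}).$$
Since every somewhere injective curve has $\operatorname{ind}\ge 0$ for generic $J$, this already gives $I(\mathcal{C})\ge 0$. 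For the equality case one tracks each inequality: it forces, for every $a$, $\operatorname{ind}(C_a)=\delta(C_a)=0$, the partition conditions to hold with equality, all the cross terms $Q_\tau(C_a,C_b)$ and the remaining self-contributions to vanish, and then a short argument using genericity of $J$ together with the partition data identifies each $C_a$ as a trivial cylinder. This proves part (1).

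For parts (2) and (3), decompose $\mathcal{C}=\mathcal{C}_0\sqcup\mathcal{C}'$, where $\mathcal{C}_0$ is the union of all the trivial-cylinder components of $\mathcal{C}$ (so $I(\mathcal{C}_0)=0$) and $\mathcal{C}'$ contains none. Additivity of the ECH index together with the fact that the $Q_\tau$-pairing of a trivial cylinder with any current is nonnegative gives $I(\mathcal{C}')\le I(\mathcal{C})$, while the estimate of the previous paragraph applied to $\mathcal{C}'=\sum_a d_a C_a$ gives $I(\mathcal{C}')\ge\sum_a d_a\operatorname{ind}(C_a)\ge\sum_a d_a$, since each non-trivial-cylinder somewhere injective $C_a$ has $\operatorname{ind}(C_a)\ge 1$ for generic $J$. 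If $I(\mathcal{C})=1$ this leaves only the possibility $\mathcal{C}'=C_1$ with $d=1$ and $\operatorname{ind}(C_1)=I(C_1)=1$; if $I(\mathcal{C})=2$ and $\alpha,\beta$ are chain complex generators, the only surviving case is $\mathcal{C}'=C_2$ with $d=1$ and $\operatorname{ind}(C_2)=I(C_2)=2$ --- the competing possibility of a double cover of an $\operatorname{ind}=1$ curve being excluded because it would force a hyperbolic orbit to appear with multiplicity two (against admissibility) or would violate the index-parity relation of Proposition \ref{IndexECHproper}(c). I expect the ECH index inequality of the second paragraph, together with the sharp analysis of its equality case, to be the main obstacle: this is where the relative intersection theory of Section \ref{ECHfoundations}, the adjunction formula, and the writhe and partition-condition combinatorics must all be assembled with care, and once it is in place the remainder of the argument is purely formal.
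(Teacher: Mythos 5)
This proposition is quoted in the paper from \cite[Prop.~3.1]{hutchings2014lecture} and is not proved there; the paper only records, immediately after the statement, the auxiliary inequality $I(C\cup T)\geq I(C)+2\#(C\cap T)$ for a current $C$ with no trivial cylinders and a union $T$ of trivial cylinders, which is one ingredient of the cited proof. Your sketch correctly reproduces the standard argument from that reference: the ECH index inequality $\operatorname{ind}(C)+2\delta(C)\le I(C)$ for somewhere injective curves via adjunction and the writhe bound, superadditivity of $I$ over components of a current via positivity of intersections, genericity forcing $\operatorname{ind}\ge 1$ for non-trivial somewhere injective components, and for part (3) the exclusion of a doubly covered $\operatorname{ind}=1$ component by combining index parity (to produce a positive hyperbolic end of multiplicity one) with admissibility of $\alpha$ and $\beta$. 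I see no gap; this is essentially the same route as the source the paper relies on.
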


To prove the above Proposition the following property is used which is a particular case of \cite[Prop.~7.1]{hutchings2002index}: If $\mathcal{C}$ is a $J$-holomorphic current with no trivial cylinders and $T$ is the union of (possibly repeated) trivial cylinders, then
\begin{equation}
    \label{intcyl}
    I(C\cup T)\geq I(C)+2\#(C\cap T)
\end{equation}
From intersection positivity it also follows that $\#(C\cap T)\geq 0$, with equality if and only if $C$ and $T$ are disjoint.

Given two chain complex generators $\alpha$ and $\beta$, the chain complex differential $\partial$ coefficient $\langle\partial \alpha, \beta\rangle\in \mathbb{Z}_2$ is a $\text{mod } 2$ count of $\text{ECH}$ index $1$ of $J$-holomorphic curves in the symplectization of $Y$ that \textit{converge as currents} to $\sum_i m_i\alpha_i$ as $s\rightarrow \infty$ and to $\sum_j n_j \beta_j$ as $s\rightarrow -\infty$ see e.g. \cite{hutchings2014lecture}.

It follows from Proposition \ref{BasedDiff} that $I$ gives rise to a relative $\mathbb{Z}_d$-grading on the chain complex $\text{ECC}_*(Y,\lambda,\Gamma,J)$, where $d$ is the divisibility of $c_1(\xi)+2\text{PD}(\Gamma)\in H^2(Y;\mathbb{Z})$ mod torsion. In order to define an (non-canonical) absolute $\mathbb{Z}_d$-grading, it is enought to fix some generatos $\beta$ with homology $\Gamma$ and set
$$I(\alpha,\beta):=[I(\alpha,\beta,Z)],$$
for an arbitraty $Z\in H_2(Y,\alpha, \beta)$. By additivity property 2. in Propostion \ref{BasedDiff} the differential decreases this absolute grading by $1$. Moreover, when $c_1(\xi)+2\text{PD}(\Gamma)$ is torsion in $H^2(Y;\mathbb{Z})$, we obtain a $\mathbb{Z}$ gradding on $\text{ECC}_*(Y,\lambda,\Gamma, J)$ as in the case of lens spaces.

\subsection{Additional structures for embedded contact homology.} 

In this subsection we define some additional important structures in the ECH setting that will be needed in the rest of the exposition. 

\subsubsection{$U$-map}
When $Y$ is connected, there is a well-defined ``U-map"
\begin{align}
    \label{Umap}
    U:\text{ECH}_*(Y,\xi,\gamma)\rightarrow\text{ECH}_{*-2}(Y,\xi,\Gamma)
\end{align}

This is induced by a chain map
\begin{align}
    U_{J,z}:(\text{ECC}_*(Y,\lambda,\Gamma),\partial_J)\rightarrow(\text{ECH}_{*-2}(Y,\xi,\Gamma),\partial_J)
\end{align}    
which counts $J$-holomorphic currents with $\text{ECH}$ index $2$ passing through a generic point $z\in \mathbb{R}\times Y$. The assumption that $Y$ is connected implies that the induced map on homology does not depend on the choice of base point, see \cite[Sec 2.5]{WeinsteinConjecture}  for details. Taubes showed in \cite{TaubesECHSW} Theorem 1.1 that the $U$ map induced in homology  agrees with a corresponding map on Seiberg-Witten Floer homology. We thus obtain the well-defined $U$-map (\ref{Umap}).

\begin{defi}
        A $U$-\textit{sequence} for $\Gamma$ is a sequence $\{\sigma_k\}_{k\geq 1}$where each $\sigma_k$ is a nonzero homogenous class in $\text{ECH}_*(Y,\xi,\Gamma)$, and $U\sigma_{k+1}=\sigma_k$ for each $k\geq 1$.
\end{defi}
We will need the following nontriviality result for the $U$-map, which is proved by combining Taubes' isomorphism with a result from Kromheimer-Mrowka \cite{kronheimer2007monopoles}:

\begin{pro}{\cite[Prop. 2.3]{cristofaro2019torsion}}
    If $c_1(\xi)+2\text{PD}(\Gamma)\in H^2(Y,\mathbb{Z})$ is torsion, then a $U$-sequence for $\Gamma$ exists. 
\end{pro}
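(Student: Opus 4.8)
The plan is to reduce the statement to a structural fact about monopole Floer homology. First I would recall Taubes' isomorphism \cite{TaubesECHSW},
\[
\text{ECH}_*(Y,\xi,\Gamma)\;\cong\;\widehat{HM}^{-*}\!\big(Y,\mathfrak{s}_\xi+\text{PD}(\Gamma)\big),
\]
together with Taubes' companion result that this isomorphism carries the ECH $U$-map to the canonical $U$-map on Seiberg--Witten Floer cohomology. Since $c_1(\mathfrak{s}_\xi)=c_1(\xi)$ and twisting a Spin$^c$ structure by $\text{PD}(\Gamma)$ shifts its first Chern class by $2\,\text{PD}(\Gamma)$, the hypothesis that $c_1(\xi)+2\,\text{PD}(\Gamma)$ is torsion says exactly that $\mathfrak{s}:=\mathfrak{s}_\xi+\text{PD}(\Gamma)$ is a torsion Spin$^c$ structure. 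Hence it suffices to produce in $\widehat{HM}^{\bullet}(Y,\mathfrak{s})$ an infinite tower of nonzero homogeneous classes, each the image of the next under the monopole $U$-map; pulling such a tower back through Taubes' isomorphism yields the desired $U$-sequence for $\Gamma$.

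Second, I would invoke the Kronheimer--Mrowka structure theory \cite{kronheimer2007monopoles} for torsion Spin$^c$ structures. For torsion $\mathfrak{s}$ the group $\overline{HM}^\bullet(Y,\mathfrak{s})$ is nonzero and, as a module over $\mathbb{Z}[U]$, is periodic in the sense that $U$ acts invertibly in all sufficiently positive and all sufficiently negative gradings; in particular it contains an infinite $U$-tower. The three flavors of monopole Floer homology sit in a long exact sequence, in which the reduced part $HM_{\mathrm{red}}$ is finitely generated and $\widehat{HM}^\bullet$ is bounded in the relevant direction; therefore, far enough along the tower, the natural comparison map from $\overline{HM}^\bullet$ to $\widehat{HM}^\bullet$ is an isomorphism. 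Transporting the $U$-tower of $\overline{HM}^\bullet$ across this isomorphism produces the required infinite $U$-tower in $\widehat{HM}^\bullet(Y,\mathfrak{s})$, and hence, via Taubes, the $U$-sequence $\{\sigma_k\}_{k\ge 1}$.

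The step I expect to be the real work is the bookkeeping with gradings and flavors: matching the (co)homological conventions and the degree reversal in Taubes' isomorphism so that the $\overline{HM}$-tower genuinely lands inside $\widehat{HM}^\bullet$ rather than in the other flavor, and verifying that the ECH and monopole $U$-maps are intertwined on the nose rather than merely up to the isomorphism's ambiguity. All of this is carried out in \cite[Prop.~2.3]{cristofaro2019torsion}, whose argument we follow; after recording the reduction above I would simply cite that proposition for the remaining details.
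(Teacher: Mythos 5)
The paper gives no proof of this proposition at all --- it is imported verbatim from \cite[Prop.~2.3]{cristofaro2019torsion}, with only the one-sentence remark that it is ``proved by combining Taubes' isomorphism with a result from Kronheimer--Mrowka.'' Your sketch is correct and fleshes out exactly those two ingredients (Taubes' isomorphism intertwining the $U$-maps, plus the Kronheimer--Mrowka structure theory for torsion Spin$^c$ structures), so it follows essentially the same route as the cited source and is consistent with the paper's usage.
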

\subsubsection{The $\text{ECH}$ partition conditions}

The $\text{ECH}$ partition conditions are a topological type data associated to the pseudoholomorphic curves (and currents) which can be obtained indirectly from certain $\text{ECH}$ index relations. In particular, the covering multiplicities of the Reeb orbits at the ends of the non-trivial components of the pseudoholomorphic curves (and currents) are uniquely determined by the trivial cylinder component information. The genus can be determined by the current's relative homology class.
\begin{defi}{\cite{hutchings2014lecture}} Let $\gamma$ be an embedded Reeb orbit and $m$ a positive integer. We define two partitions of $m$, the \textit{positive partition} $P_\gamma^+(m)$ and the \textit{negative partition} $P_\gamma^-(m)$ as follows
\begin{itemize}
    \item If $\gamma$ is positive hyperbolic, then
    $$P_\gamma^+(m):=P_{\gamma}^-(m):=(1,\dots,1)$$
    \item If $\gamma$ is negative hyperbolic, then
    \begin{align*}
        P_\gamma^+(m):=P_{\gamma}^-(m):=\begin{cases}
            (2,\dots,2) & m \text{ even}\\
            (2,\dots,2,1) & m \text{ odd}
        \end{cases}
    \end{align*}
    \item If $\gamma$ is elliptic then the partitions are defined in terms of the quantity $\theta\in \mathbb{R}/\mathbb{Z}$ for which $\text{CZ}_\tau(\gamma^k)=2\lfloor k\theta \rfloor+1$. We write
    $$P_\gamma^\pm(m):=P_\theta^\pm(m)$$
    with the right hand side defined as follows. 
\end{itemize}
    Let $\Gamma_\theta^+(m)$ denote the highest concave polygonal path in the plane that starts at $(0,0)$, ends at $(m,\lfloor k\theta \rfloor)$, stays below the line $y=\theta x$ and has corners at lattice points. Then the integers $P_\theta^+(m)$ are the horizontal displacements of the segments of the path $\Gamma_\theta^+(m)$ between the lattice points. 
    
    Likewise, let $\Gamma_\theta^+(m)$ denote the lowest convex polygonal path in the plane that starts at $(0,0)$, ends at $(m,\lfloor k\theta \rfloor)$, stays above the line $y=\theta x$ and has corners at lattice points. Then the integers $P_\theta^-(m)$ are the horizontal displacements of the segments of the path $\Gamma_\theta^-(m)$ between the lattice points. 
    
    Both $P_\theta^\pm(m)$ depend only on the class of $\theta$ in $\mathbb{R}/\mathbb{Z}$. Moreover, $P_\theta^+(m)=P_{-\theta}^{-}(m)$.
\end{defi}

\subsubsection{Filtered ECH.}
There is a filtration on \text{ECH} which enables us to compute the embedded contact homology via succesive approximations (see theorem 2.17 \cite{nelson2022embedded} ). The \textit{symplectic action} of an orbits set $\alpha=\{(\alpha_i,m_i)\}$ is 
\begin{align*}
    \mathcal{A}(\alpha):=\displaystyle\sum_i m_i\int_{\alpha_i} \lambda
\end{align*}
If $J$ is a $\lambda$-compatible and there is a $J$-holomorphic current from $\alpha$ to $\beta$, then $\mathcal{A}(\alpha)\geq \mathcal{A}(\beta)$ by Stokes' theorem, since $\dd \lambda$ is an area form on such $J$-holomorphic curves. Since $\partial$ counts $J$-holomorphic currents, it decreases symplectic action, that is, 
\begin{align}
\label{Stokes}
    \langle\partial\alpha,\beta\rangle\not=0 \text{ implies } \mathcal{A}(\alpha)\geq \mathcal{A}(\beta)
\end{align}
Let $\text{ECC}_*^L(Y,\lambda,\gamma,J)$ denote the subgroup of $\text(ECC)_*(Y,\lambda,\Gamma,J)$ generated by orbits set of symplectic action less than $L$. Because $\partial$ decreases action, it is a subcomplex. It is shown (See \cite[theo~1.3]{hutchings2013proof}) that the homology of $\text{ECC}_*(Y,\lambda,\Gamma,J)$ is independent of $J$, therefore we denote its homology by 
$\text{ECC}^L_*(Y,\lambda,\Gamma,J)$, which we call filtered $\text{ECH}$.
Given $L<L'$, there is a homomorphism 
\begin{align*}
\iota^{L,L'}:\text{ECH}_*^L(Y,\lambda,\Gamma)\rightarrow\text{ECH}_*^{L'}(Y,\lambda,\Gamma)   
\end{align*}
induced by the inclusion $\text{ECC}_*^L(Y,\lambda,\Gamma)\rightarrow\text{ECC}_*^{L'}$ and independent of $J$. The $\iota^{L,L'}$ fit together into a direct system $(\{\text{ECC}_*^L(Y,\lambda,\Gamma)\}_{L\in \mathbb{R}},\iota^{L,L'})$. Because taking direct limits commutes with taking homology, we have
\begin{align}
\label{directlimit}
\text{ECH}_*(Y,\lambda,\Gamma)=H_*\left(\lim_{L\rightarrow \infty} \text{ECC}_*^L(Y,\lambda,\Gamma,J)\right)=\lim_{L\rightarrow\infty}\text{ECH}^L_*(Y,\lambda,\Gamma)   
\end{align}

\subsubsection{ECH spectrum}
\label{ECHspectrum}
\text{ECH} contains a canonical class defined as follows. Observe that for any nondegenerate contact three-manifold $(Y,\lambda)$, the empty set of Reeb orbits is a generator of the chain complex $\text{ECC}(Y,\lambda,0,J)$. It follows from (\ref{Stokes}) that this chain complex generator is actually a cycle, i.e,
$$\partial\emptyset=0$$

\text{ECH} cobordism maps can be used to show that the homology class of this cycle does not depend on $J$ or $\lambda$, and thus represents a well-defined class 
$$[\emptyset]\in\text{ECH}_*(Y,\xi,0)$$

\begin{defi}
    Let $(Y,\lambda)$ be a closed contact closed $3$-manifold such that $[\emptyset]\not=0\in\text{ECH}(Y,\xi,0)$. We define the \textit{ECH spectrum} as
\begin{align*}
        c_k(Y,\lambda)=\inf\{L:\eta\in \text{ECH}^L_{2k}(Y,\lambda,0), U^k \eta=[\emptyset]\}
    \end{align*}
\end{defi}

\begin{defi}
    A (four-dimensional) \textit{Liouville domain} is a weakly exact symplectic filling $(X,\omega)$ of a contact three-manifold $(Y,\lambda)$.
\end{defi}

\begin{defi} If $(X,\omega)$ is a four-dimensional Liouville domain with boundary $(Y,\lambda)$, define the $\text{ECH}$ $\textit{capacities}$ of $(X,\omega)$ by
$$c_k(X,\omega)=c_k(Y,\lambda)\in [0,\infty]$$
\end{defi}

A justification for this definition can be found in \cite{hutchings2014lecture} in Section 1.5. The definition of capacities can be extended to non-Liouville domains through a limiting argument. 



\section{Combinatorial Model of the embedded contact complex of concave lens spaces.}
\label{ECHlensspace}

In this section we relate the embedded contact complex of a concave contact form $\lambda$ over a lens space $L(n,1)$ with a combinatorial model.  As it is usual in this context (see \cite[Sec~4.2]{hutchings2014lecture}) Reeb orbits appear in $\Sp^1$-families and the contact form can be perturbed so that this $\Sp^1$-families becomes two orbits. In our situation we do an additional perturbation that we are calling a \textit{concave perturbation} which is going to be useful in simplifying the combinatorial complex.  

\subsection{Reeb Dynamics for toric contact closed $3$-manifolds.}
\label{sec:Reebdynamics}
Let $X_\Omega$ be a concave toric domain in $M_n$. As it is explain in Section \ref{subsec: ToricLensspaces} the boundary $Y_a=\partial \Omega$ has a contact $1$-form $\lambda$ described by a function $a:[0,1]\rightarrow V_n$. Lets suppose additionally $a'(0)$ and $a'(1)$ are not proportional to a primitive vector. Similar to \cite{Choi_2014} Section 3.3 the closed orbits of the Reeb field associated to $\lambda_a$ are given by the following:

\begin{enumerate}[i.]
    \item
    \label{orbit:positive}
    The orbit $e_+$ obtained by the projection $\pi(\{0\}\times \To^2)$ over $L(n,1)$ with action $\mathcal{A}(e_+)=a(0)\times (n,1)$. 
    \item 
    \label{orbit:positive}
    The orbit $e_-$ obtained by the projection $\pi(\{1\}\times \To^2)$ over $L(n,1)$ with action $\mathcal{A}(e_-)=a_2(1)$.
    \item 
    \label{orbit:torus}
    For each $x\in (0,1)$ for which $(a_1'(x),a'_2(x))$ is proportional to $(-p,q)$ where $p$ and $q$ are relative primes to each other, there is a Morse-Bott $\mathbb{S}^1$-family of Reeb orbits foliating $\{x\}\times \To^2$, with relative homology over $\To^2$ equal $(q,p)$. Each orbit of this folliation has action $a(x)\times (-p,q)$. We denote the foliated torus as $\mathcal{T}_{q,p}$.
\end{enumerate}

The orbits $e_+$ and $e_-$ are called \textit{exceptional orbits}. These orbits are homologoues in $H_1(Y_1)$ and are generators of the singular homology as can be seen from \ref{sec:fundamentalgroup}. Notice that the actions of the Reeb orbits just decribed coincide with the $\Omega$-length given in the Definition \ref{def:omegalength}. 



\subsection{Two Steps Perturbation.}
\label{perturbation}

Suppose that $Y_a=L(n,1)$ is a concave toric lens space with a contact form $\lambda_a$. To obtain a simple version of a combinatorial complex it is convinient to do two perturbations over $\lambda_a$. In Proposition \ref{pro:equicomgeo} we stablish a bijection between Reeb orbits sets and integral concave paths

Before explaining the perturbations it is best to calculate the rotation numbers of the exceptional orbits $e_+$ and $e_-$.

\textbf{Trivialization over the elliptic orbits $e_+$ and $e_-$:}

Recall that $M_1$ is diffeomorphic to $\mathbb{C}^2$. That means that a toric domain $X_\Omega$ in $M_1$ is an usual toric domain an $\partial X_\Omega$ is diffeomorphic to $\mathbb{S}^3$. Suppose that the $\partial X_\Omega$ is describe by a function $a$ where $a'(0)$ and $a'(1)$ are not proportional to a primitive vector. Therefore by \cite[Sec.~3.3]{Choi_2014} the Reeb orbits $e_+$ and $e_-$ are non-degenerated and from Equation 3.11 and Equation 3.12 of \cite[Sec.~3.3]{Choi_2014}, we have 

\begin{align}
\label{eq:rotanglesphere}
    \phi_+=-\displaystyle\frac{a'_1(0)}{a'_2(0)} &&
    \phi_-= -\displaystyle\frac{a'_2(0)}{a'_1(0)}
\end{align}

Where, $\phi_+$ and $\phi_-$ are the rotation angles of the orbits $e_+$ and $e_-$ in $Y_a=\partial X_\Omega$ respectively. 

Here the trivialization $\tau$ is given by Equations \ref{eq:Reebandstructure} for the case of the sphere. The behavious of $e_+$ and $e_-$ is similar for toric lens space different from the sphere as it is explain in the Lemma below. 

\begin{lemm}
\label{lem:rotnumbers}
Let $X_\Omega$ be a concave toric domain in $M_n$. Let $Y_a=\partial X_\Omega$ for some function $a$ and suppose that $a'(0)$ and $a'(1)$ are not proportional to a primitive vector. 
The orbits $e_+$ and $e_-$ are elliptic orbits. Futhermore, Under the trivialization induced by Equation \ref{eq:Reebandstructure} we have that 
\begin{align}
    \phi_+=-\displaystyle\frac{a'_2(0) }{a'(0)\times (n,1)}, && \phi_-=-\displaystyle\frac{a_2'(1)}{a_1'(1)}
\end{align}
Where $\phi_+$ and $\phi_-$ are the rotation numbers of $e_+$ and $e_-$ respectively. 
\end{lemm}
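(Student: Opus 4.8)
The plan is to reduce the computation to the already-established sphere case, Equation~\eqref{eq:rotanglesphere}, by exploiting the fact that a neighborhood of each exceptional orbit inside $M_n$ is smooth and toric. First I would note that $e_+$ and $e_-$ lie over the two axes of $V_n$, i.e.\ over the locus where one of the $\To^2$-directions degenerates, and that both axes avoid the origin, which is the unique orbifold point of $M_n$. Hence there are open sets $U_\pm$ containing $e_\pm$ which are genuine smooth symplectic toric four-manifolds whose moment image is a neighborhood in $V_n$ of a single axis. The germ of a toric four-manifold along a single facet is determined by the primitive inward normal to that facet, so a change of the $\To^2$-coordinates by a suitable $A\in SL_2(\Z)$ (or $GL_2(\Z)$, if one insists on controlling the orientations of the collapsed circles) identifies $U_\pm$, equivariantly and symplectically, with a neighborhood of the corresponding facet orbit of $M_1\cong\mathbb{C}^2$; this is the usual equivariant Moser/Darboux statement near a facet, and it is what licenses quoting \cite[Sec.~3.3]{Choi_2014}. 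For $e_-$ the relevant axis is $\{t(0,1):t\ge 0\}$, along which $-\partial_{t_1}+n\partial_{t_2}$ collapses, and one can arrange $A$ so that the induced trivialization of $\xi_a$ is exactly the one coming from \eqref{eq:Reebandstructure}; then Equation~\eqref{eq:rotanglesphere} transfers verbatim and gives $\phi_-=-a_2'(1)/a_1'(1)$, formally identical to the sphere. For $e_+$ the axis is $\{t(n,1):t\ge 0\}$, along which $\partial_{t_1}$ collapses, and the straightening matrix is $A=\left(\begin{smallmatrix}0&1\\-1&n\end{smallmatrix}\right)\in SL_2(\Z)$, which sends $(n,1)\mapsto(1,0)$.

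With this identification in place I would apply Equation~\eqref{eq:rotanglesphere} in the transformed picture, which needs two inputs. First, how the curve $a$ parametrizing $\partial^+\Omega$ transforms: it is simply replaced by $A\circ a$, so that $\tilde a\,'(0)=A\,a'(0)$ has components linear in $a_1'(0),a_2'(0)$, and the entry that now plays the role of the old ``$a_2'(0)$'' is, up to sign, $a'(0)\times(n,1)=a_1'(0)-n\,a_2'(0)$. Second, how the trivialization changes: the trivialization in the statement is the explicit frame $\xi_a=\text{span}\{\partial_x,\,-a_2\partial_{t_1}+a_1\partial_{t_2}\}$ of \eqref{eq:Reebandstructure}, which restricts along $e_+$ --- where $\partial_{t_1}$ is collapsed --- to the honest nonvanishing section $\{\partial_x,\,a_1(0)\partial_{t_2}\}$, and under $A$ this section is carried to the distinguished section of the sphere model used in \cite{Choi_2014}, so the two rotation numbers agree with no extra integer shift. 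Feeding $\tilde a\,'(0)$ into \eqref{eq:rotanglesphere} then produces the stated value $\phi_+=-a_2'(0)/(a'(0)\times(n,1))$, once the orientation of the collapsed circle (which may be reversed by $A$) is accounted for. Ellipticity of $e_\pm$ is automatic once they are identified with facet orbits of a smooth toric domain, and the hypothesis that $a'(0),a'(1)$ are not proportional to primitive vectors is precisely what makes these rotation numbers irrational, hence the orbits nondegenerate. As an alternative to the $\mathbb{C}^2$-comparison one can linearize the Reeb flow $R=\frac{1}{a\times a'}\bigl(a_2'\partial_{t_1}-a_1'\partial_{t_2}\bigr)$ directly: pass to polar coordinates in the plane transverse to the collapsing direction near $e_+$, and read off the rotation speed of the nearby Morse--Bott tori $\mathcal{T}_{q,p}$ as $x\to 0$, measured in the frame $-a_2\partial_{t_1}+a_1\partial_{t_2}$; the limit reproduces the same ratio.

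The step I expect to be the real obstacle is bookkeeping rather than any single difficult idea: pinning down the orientations of the collapsed circles, and hence the sign (and the absence of an integer normalization term), as the rotation number is transported through the $SL_2(\Z)$-change, together with checking that the explicit frame of \eqref{eq:Reebandstructure} is genuinely the one for which the sphere formula \eqref{eq:rotanglesphere} transfers without a correction. A secondary point that should be written out carefully, since everything else rests on it, is the equivariant-symplectic identification of $U_\pm$ with a neighborhood of a facet orbit of $\mathbb{C}^2$.
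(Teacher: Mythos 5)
Your proposal follows essentially the same route as the paper: the paper also reduces to the sphere formula \eqref{eq:rotanglesphere} by constructing an auxiliary curve $\tilde a$ agreeing with (a transformed) $a$ near the endpoint and using a local contactomorphism induced by the identity for $e_-$ and by the very same matrix $A_n=\left(\begin{smallmatrix}0&1\\-1&n\end{smallmatrix}\right)$ (acting as $(A_n,A_n^{-T})$) for $e_+$, checking that the trivializations of \eqref{eq:Reebandstructure} are carried to one another. Your additional remarks on orientation and sign bookkeeping, and the alternative direct linearization of the Reeb flow, are consistent refinements of the same argument.
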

\begin{proof}
    Lets begin checking that $e_-$ is an elliptic orbit and then we calculate the rotation angle.
    Consider a function $\tilde{a}:[0,1]\rightarrow \mathbb{R}^2$ that is equal to $a$ in a vicinity of $1$ such that $Y_{\tilde{a}}$ is a toric contact manifold. Notice that we $Y_a$ can use the identity matrix to construct a local contactomorphism from a toric open set $U$ of $e_-$ in $Y_a$ and  a toric open set $\tilde{U}$ of $e_-$ in $Y_{\tilde{a}}$. Since, $a'(1)$ is not proportional to primitive vector $e_-$ is a non-degenerated elliptic orbit. This contactomorphism take trivizalization of 
    $\xi_a$ over $e_-$ in $Y_a$ to trivizalization 
    of $\xi_{\tilde{a}}$ over $e_-$ in $Y_{\tilde{a}}$. Therefore, $\phi_+=-\frac{a_2'(1)}{a_1'(1)}$.
    
    We use a similar strategy to check the result for $e_+$. Consider the function $\tilde{a}:[0,1]\rightarrow \mathbb{R}^2$ define as $
    \tilde{a}(t)=(a_2(t), a(t)\times (n,1))$ when $t$ is near $0$ and such that $Y_{\tilde{a}}$ is a contact toric manifold anywhere else. Consider, the matrix 
    $$
    A_n=\begin{pmatrix}0 & 1 \\-1   & n  \end{pmatrix}    
    $$
    Notice that $A_n$ is an  $\text{SL}_2(\mathbb{Z})$ matrix, therefore, the map $(A_n,A_n^{-T})$ can be used to induce a local contactmorphism from a vinicity of $e_+$ in $Y_a$ and to a vinicity of $e_+$ in $Y_{\tilde{a}}$ that takes the trivialization of $e_+$ in $Y_a$ to the trivialization of $e_+$ in $Y_{\tilde{a}}$, therefore, the rotations numbers are the same. The result then follows from Equation \ref{eq:rotanglesphere}.
\end{proof}

From now on we assume that the trivilizations over the contact structures over $e_+$ and $e_-$ are given by Equation \ref{eq:Reebandstructure}.

\subsubsection{Perturbation over the concavity.}
Now we explain how to do a perturbation over the concavity.
    
    \begin{lemm}
    \label{lem:convpertur}
        Let $X_\Omega$ be a concave toric domain in $M_n$. Set $Y_a=(\partial X_\Omega,\lambda_a)$ for some function $a:[0,1]\rightarrow V_n$. Then, for every,  $N$ non-negative integer there exist a perturbation $\tilde{a}$ of $a$ equal to $a$ outside of an open set contaning the exceptional orbits, such that, $e_+$ and $e_-$ are non-degenerated for $Y_{\tilde{a}}$ and such that $CZ_\tau(e^k_+)>N$ and $CZ_\tau(e^k_-)>N$, for every, $k$ positive. 
    \end{lemm}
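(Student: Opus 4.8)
The plan is to reduce everything to the explicit rotation--number formulas of Lemma \ref{lem:rotnumbers} and then to realise large rotation numbers for $e_+$ and $e_-$ by a local modification of $a$ near its endpoints. Recall that for a nondegenerate elliptic orbit $\gamma$ with rotation number $\theta$ one has $CZ_\tau(\gamma^k)=2\lfloor k\theta\rfloor+1$, which for $\theta>0$ is non-decreasing in $k$ with minimum $2\lfloor\theta\rfloor+1$ at $k=1$. So it suffices to produce $\tilde a$ for which $e_+,e_-$ are nondegenerate elliptic with irrational rotation numbers $\phi_+,\phi_->N$; then $CZ_\tau(e_\pm^k)\ge 2\lfloor\phi_\pm\rfloor+1\ge 2N+1>N$ for all $k\ge 1$. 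By the (purely local) computation behind Lemma \ref{lem:rotnumbers} the numbers $\phi_\pm$ depend only on the $1$-jets of $\tilde a$ at the endpoints, namely $\phi_+=-\tilde a_2'(0)/(\tilde a'(0)\times(n,1))$ and $\phi_-=-\tilde a_2'(1)/\tilde a_1'(1)$, while the contact condition of Lemma \ref{lem:exilens} forces $\tilde a'(0)\times(n,1)<0$ and $\tilde a_1'(1)<0$. Choosing $\tilde a'(0)=(n-\eta,1)$ and $\tilde a'(1)=(-\eta',1)$ with $\eta,\eta'>0$ small and irrational gives $\phi_+=1/\eta$ and $\phi_-=1/\eta'$; since these vectors have irrational slope, Lemma \ref{lem:rotnumbers} applies, and taking $\eta,\eta'<1/N$ makes $\phi_\pm>N$ and irrational, so $e_\pm$ are nondegenerate.

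To build such a $\tilde a$, fix a small $\epsilon>0$, set $\tilde a=a$ on $[\epsilon,1-\epsilon]$, and on $[0,\epsilon]$ take a smooth curve with $\tilde a(0)$ on the ray $\{t(n,1):t>0\}$ close to $a(0)$, with $\tilde a'(0)=(n-\eta,1)$, and agreeing with $a$ to infinite order at $t=\epsilon$; do the symmetric thing on $[1-\epsilon,1]$ with $\tilde a'(1)=(-\eta',1)$ and $\tilde a(1)$ on the $y$-axis. Since $\tilde a=a$ on the middle interval, the modification is supported in an open neighbourhood of $e_+\cup e_-$, as required (and can be made $C^0$-small). The content of the construction is that these two local pieces can be chosen so that $Y_{\tilde a}$ is still a toric contact lens space, i.e.\ so that $\tilde a$ is an embedding into $\bar{V}_n$ with $\tilde a\times\tilde a'>0$ everywhere.

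This is the step that needs care, and it is the main obstacle. Working in polar coordinates about the origin, write $\theta(t)=\arg\tilde a(t)$ and $\psi(t)=\arg\tilde a'(t)$; then $\tilde a\times\tilde a'>0$ is equivalent to $\theta'>0$, and also to $\psi-\theta\in(0,\pi)\pmod{2\pi}$ (a condition preserved by continuity). A curve satisfying it is automatically embedded and, since $\theta$ runs monotonically from $\arctan(1/n)$ to $\pi/2$, stays in the sector $V_n$. Over $[0,\epsilon]$ the angle $\theta$ moves only by $O(\epsilon)$ from $\theta(0)=\arctan(1/n)$, whereas $\psi$ must go from $\psi(0)=\arg(n-\eta,1)$ --- slightly above $\arctan(1/n)$ --- to $\psi(\epsilon)=\arg a'(\epsilon)\approx\arg a'(0)$. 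But the contact condition for the original $a$ at $t=0$ already forces $\arg a'(0)\in(\arctan(1/n),\pi+\arctan(1/n))$; hence, shrinking $\eta$ if necessary (which only makes $\phi_+$ larger), we may assume $\psi(0)<\arg a'(0)$ and let $\psi$ increase monotonically over $[0,\epsilon]$ by strictly less than $\pi$, so that for $\epsilon$ small $\psi-\theta$ never leaves $(0,\pi)$. Near $t=1$ the argument is identical, with $\arg a'(1)\in(\pi/2,3\pi/2)$ forced by the contact condition, $\psi(1)=\arg(-\eta',1)$ slightly above $\pi/2$, and $\psi$ decreasing monotonically from $\arg a'(1)$ to $\psi(1)$ by less than $\pi$. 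Splicing the two local pieces to the unchanged middle of $a$ yields $\tilde a$ with the asserted properties; the reason the straightening of the tangent at each corner does not destroy the contact condition is precisely that that condition already constrains $\arg a'(0)$ and $\arg a'(1)$ to a half-turn's worth of directions.
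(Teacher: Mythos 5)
Your proof is correct and follows the same strategy as the paper's: perturb $a$ only in a neighbourhood of its endpoints so that the tangent directions there become nearly parallel to the respective axes, whence the rotation numbers of Lemma \ref{lem:rotnumbers} blow up and $CZ_\tau(e_\pm^k)=2\lfloor k\phi_\pm\rfloor+1>N$ for all $k\geq 1$. You additionally spell out, via the polar-coordinate argument, why such a local modification exists while preserving the contact condition $\tilde a\times\tilde a'>0$ --- a point the paper's proof merely asserts by positing the family $a^\epsilon$ --- and your sign choices are the ones consistent with Lemma \ref{lem:exilens} and Lemma \ref{lem:rotnumbers}, whereas the paper's written conditions ($a^\epsilon\times(a^\epsilon)'<0$ and $a'_\epsilon(0)\to -k_1(n,1)$) carry an orientation slip.
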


    \begin{proof}   
        For $\epsilon>0$ lets consider a family of function $\{a^\epsilon\}_{\epsilon>0}$ such that 
 \begin{itemize}
    \item $a_\epsilon'(0)$ and $a'_\epsilon(1)$ are not proportional to a primitive vector. 
    \item $a^\epsilon(x)\times (a^\epsilon)'(x)<0$ for every $x\in [0,1]$
     \item $a(x)=a^\epsilon(x)$ for every $x\in [\epsilon,1-\epsilon]$.
     \item There exist positive constants $k_1$ and $k_2$ such that $a'_\epsilon(0)\rightarrow -k_1 (n,m)$ and $a'_\epsilon(1)\rightarrow k_2 (0,1)$ when $\epsilon\rightarrow 0$.
 \end{itemize}

    By Equation \ref{eq:CZelliptic} and Lemma \ref{lem:rotnumbers} there exist  $\epsilon>0$ small enough such the conditions of the lemma follows. 
    \end{proof}

    \begin{defi}
        For a concave domain $X_\Omega$ we call the perturbation of Lemma \ref{lem:convpertur} a \textit{perturbation over the concavity} or a concave perturbation. 
    \end{defi}

\subsubsection{Morse-Bott Perturbation.}
\label{Morse-Bott}

Let $\{x\}\times \To^2= \mathcal{T}_{q,p}$ be a Morse-Bott torus as explained in \ref{sec:Reebdynamics}. As it is customary in this context (see \cite[Section~4.2]{hutchings2014lecture} or \cite{Choi_2014}) given $L>0$ such that $a(x)\times (-p,q)<L$ the contact form $\lambda_a$ can be perturbed on an arbitrary small vicinity of $\mathcal{T}_{q,p}$ so that the foliated torus $\mathcal{T}_{q,p}$ splits into two embedded orbits of nearly the same action and in no other orbit of action less than $L$ is created by this perturbation. Futhermore the newly created orbits are reparametrizations of two Reeb orbits in the foliation of $\mathcal{T}_{q,p}$. The orbits behave as follow, one is elliptic $\gamma_e$, and the other one is  hyperbolic $\gamma_h$. The perturbation can be chosen so that the actions of $\gamma_e$ and $\gamma_h$ are $(1/L)$-close to $a(x)\times(p,q)$. Furthermore, the perturbation may be arranged so that the linearization is conjugate to a small negative rotation. In particular, from Equation \eqref{eq:CZhyperbolic} and Equation \eqref{eq:CZelliptic} it follows that
\begin{align}
    \text{CZ}_{\tau}(\gamma_e^k)=-1 &\text{ for } \mathcal{A}(\gamma_e^k)<L, \text{ and},\\  
    \text{CZ}_{\tau}(\gamma_h^k)=0 &\text{ for } \mathcal{A}(\gamma_h^k)<L
\end{align}




\subsubsection{A filtrated ECC associated to a concave toric lens space.}
\label{sec:filtrateeccforconcavetoricdomain}

Let $X_\Omega$ be a concave toric domain. Write $Y_a=(\partial X_\Omega,\lambda)$. In this section we explain how to associate a filtrated embedded contact complex to $Y_a$ and therefore to $X_\Omega$. By the discussion in Section \ref{sec:Reebdynamics} by the exception of the orbits $e_+$ and $e_-$ each Reeb orbit $\gamma$ of $Y_a$ belongs to a torus $\mathcal{T}_{q,p}$ foliated by Reeb orbits, where $(q,p)$ is the singular homology of $\gamma$. It makes sense to formally define the following sets:

\begin{defi}
    A torus set $\mathcal{T}=\{(\mathcal{T}_i,m_i)\}$ is a set of pairs $(\mathcal{T}_i,m_i)$ such that $\mathcal{T}_i$ is or (i) a torus of the form \ref{orbit:torus} in Section \ref{sec:Reebdynamics} or (ii) is one of the exceptional orbits $e_+$ and $e_-$.
\end{defi}

Since every action of every Reeb orbit in a torus $\mathcal{T}_(q,p)$, we can define the action of torus $\mathcal{A}(\mathcal{T}_{q,p})$ as the action of any of its periodic orbits. In the case of the exceptional orbits we keep the same notion of the action. To each torus set $\mathcal{T}=\{(\mathcal{T}_i,m_i)\}$ we can associated an action 
$$\mathcal{A}(\mathcal{T})=\displaystyle\sum_i m_i\mathcal{A}(\mathcal{T}_{q,p})
$$

\begin{defi}
    Let $L>0$ be a positive real number and $N$ a positive integer. Suppose that $\mathcal{A}(\mathcal{T})<L$. A Morse-Bott perturbation of the contact structure $\lambda$ over a torus set $\mathcal{T}$ with respect to $N$ and $L$ is a Morse-Bott pertubation over all of the torus sets contained in $\mathcal{T}$ and a concave perturbation with respect to $N$ in the case that $\mathcal{T}$ contained exceptional orbits. 
\end{defi}

\begin{lemm}
    \label{lem:therealperturbation}
    Let $L$ be a positive real number and $N$ a positive integer. Then there exist a perturbation $\lambda^{L,N}$ arbitrary close to $\lambda$ such that:  
    \begin{enumerate}[i.]
        \item If $\alpha$ is a generator of $ ECC^L(Y,\lambda,0)$ then $\alpha$ consist precisely of orbits sets obtained from a Morse-Bott perturbation on each torus set $\mathcal{T}$ such that $\mathcal{A}(\mathcal{T})<L$.
        \item The Conley-Zhender index of the exceptional orbits and all its iterates is bigger than $N$.
    \end{enumerate}
\end{lemm}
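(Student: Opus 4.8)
The plan is to build $\lambda^{L,N}$ by splicing together the two perturbation mechanisms already available: the Morse--Bott perturbation of Section~\ref{Morse-Bott}, applied to the Morse--Bott tori of action $<L$, and the concave perturbation of Lemma~\ref{lem:convpertur}, applied near the exceptional orbits with parameter $N$. The first thing to check is that only finitely many tori are relevant. Since $\Omega$ is bounded and, on any compact subinterval of $(0,1)$, the curve $a$ is nowhere radial, the action $a(x)\times v$ of the Morse--Bott torus at parameter $x$ with primitive direction $v$ is bounded below by a positive multiple of $\|v\|$ there; and near $x=0$ (resp.\ $x=1$) the point $a(x)$ is close to the fixed nonzero point $a(0)$ (resp.\ $a(1)$), while the direction of any nearby torus stays a definite angle away from $(n,1)$ (resp.\ $(0,1)$) because $a'(0)$, $a'(1)$ are not proportional to primitive vectors, so again the action grows linearly in $\|v\|$. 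Hence there are only finitely many Morse--Bott tori $\mathcal{T}_{q_1,p_1},\dots,\mathcal{T}_{q_M,p_M}$ of $\lambda_a$ with action $<L$, occurring at parameters $x_1<\dots<x_M$ in $(0,1)$; fix $\delta_0>0$ with every $x_i\in[\delta_0,1-\delta_0]$.

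Next I would perform the perturbations with disjoint supports. Choosing $\eta\in(0,\delta_0)$ small, apply the Morse--Bott perturbation of Section~\ref{Morse-Bott} with threshold $L$ inside each slab $\{|x-x_i|<\eta\}\times\To^2$, so that these slabs are pairwise disjoint and contained in $\{\eta<x<1-\eta\}$: this splits each $\mathcal{T}_{q_i,p_i}$ into an elliptic orbit $\gamma_{e,i}$ and a hyperbolic orbit $\gamma_{h,i}$ of almost equal action, with $\text{CZ}_\tau(\gamma_{e,i}^k)=-1$ and $\text{CZ}_\tau(\gamma_{h,i}^k)=0$ for iterates of action $<L$, and produces no other orbit of action $<L$. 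Then apply Lemma~\ref{lem:convpertur} with the given $N$ and parameter $\epsilon<\eta$; this is supported in $\{x<\epsilon\}\cup\{x>1-\epsilon\}$, hence disjoint from the slabs, and makes $e_+$, $e_-$ nondegenerate elliptic with $\text{CZ}_\tau(e_\pm^k)>N$ for all $k\ge1$, which is condition~(ii). In doing so one takes the perturbed curve $a^\epsilon$ on $[0,\epsilon]\cup[1-\epsilon,1]$ with only irrational tangent directions, so that it carries no additional Morse--Bott torus there; as $a^\epsilon$ stays in a small neighborhood of the fixed nonzero points $a(0)$, $a(1)$, the only closed Reeb orbits of action $<L$ created in those two regions are $e_+$, $e_-$ and their iterates. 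Call the resulting contact form $\lambda^{L,N}$.

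Finally I would identify the generators. Off the supports above, $\lambda^{L,N}$ agrees with $\lambda_a$, whose only Morse--Bott families of action $<L$ are the $\mathcal{T}_{q_i,p_i}$ and the orbits $e_\pm$, all lying inside those supports. By the standard fact that, for a sufficiently small perturbation, every closed Reeb orbit of action $<L$ of the perturbed form is $C^0$-close to some Reeb orbit of action $\le L$ of the unperturbed form, every closed $\lambda^{L,N}$-orbit of action $<L$ is an iterate of one of $\gamma_{e,i}$, $\gamma_{h,i}$, $e_+$, $e_-$; in particular $\lambda^{L,N}$ is nondegenerate up to action $L$, so $\text{ECC}^L(Y,\lambda^{L,N},0)$ is defined and its generators are precisely the admissible orbit sets assembled from these orbits --- that is, the orbit sets obtained by a Morse--Bott perturbation of each torus set $\mathcal{T}$ with $\mathcal{A}(\mathcal{T})<L$ together with the concave perturbation at the exceptional orbits, which is condition~(i). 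I expect the main obstacle to be exactly this last identification in the presence of the concave perturbation: since driving the rotation numbers of $e_\pm$ above $N$ is not a $C^1$-small modification of $\lambda_a$, one has to be careful that the perturbation remains $C^0$-small and is supported in a region collapsing onto the single orbit $e_\pm$, so that the ``short orbits stay near short orbits'' comparison --- and hence the control of which orbit sets occur as generators --- is still valid.
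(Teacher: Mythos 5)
Your proposal is correct and follows essentially the same route as the paper, whose proof is the one-line observation that one simply applies the Morse--Bott perturbation with respect to $L$ and $N$ (which by the paper's definition already bundles in the concave perturbation at the exceptional orbits) to each torus set of action less than $L$. Your write-up supplies the details the paper leaves implicit --- finiteness of the relevant tori, disjointness of the perturbation supports, and the identification of short orbits of the perturbed form --- but introduces no new idea or different decomposition.
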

\begin{proof}
    This consist of doing a Morse-Bott perturbation with respect to $L$ and $N$ for each torus set $\mathcal{T}$ such that $\mathcal{A}(\mathcal{T})<L$.
\end{proof}

We denote the perturbation of the manifold by $Y_{L,N}$.

\begin{rem}
    In principle the more correct notation of the embedded contact complex we are studying is $ ECC^L(Y,\lambda,0,J)$ for some generic almost complex structure, since the differential depends on the almost complex structure to be defined. However, since we are not occupaying ourselves with arguments that involve the differential in much detailed, the simplified notation $ ECC^L(Y,\lambda,0,J)$ is justified. 
\end{rem}

\subsection{Concave Generators and the Combinatorial ECH index}
\label{sec:generators}

Now we modify slightly the definition of the concave integral path to obtained the notion of concave generator. We also relate it to certain orbits sets of Perturbation given by Lemma \ref{lem:therealperturbation}.


\begin{defi}
    A concave generators $\Lambda$ in $V_n$ is a concave integral path in $V_n$ such that each edge $v$ have a label `e' or `h'.
\end{defi}

Let $\Lambda$ be a concave generator in $V_n$. We define the combinatorial ECH index of $\Lambda$ as
 
\begin{equation} 
\label{eq:echinccon}
I(\Lambda)=2\mathcal{L}_{n}(\Lambda)+h(\Lambda)  
\end{equation}

Where $h(\Lambda)$ and $\mathcal{L}(\Lambda)$ is the number of lattice points contain in the closed space defined by $\Lambda$ and the axis of $V_n$ without counting the lattice points of $\Lambda$.

Now lets consider a concave toric domain $X_\Omega$ in $M_n$. For $Y=(\partial X_\Omega,\lambda)$ and let $\lambda^{L,N}$ be a perturbation as given by Lemma \ref{lem:therealperturbation} for some $L$ and some $N$. Suppose that $\alpha=\{(\alpha_i,m_i)\}$ is a orbit set in $ECC^L(Y,\lambda^{L,N},0)$ which does not contain the orbits $e_+$ or $e_-$. For each $\alpha_i$ in the orbit set $\alpha$ write $[\alpha_i]=(q_i,p_i)\in H_2(\To^2)$. Notice that we can organize the orbit set $\alpha$ as $\{(\alpha_1,m_1),\dots,(\alpha_k,m_k)\}$ where $q_1/p_1<\cdots <q_k/p_k$. Notice that the homological condition $[\alpha]=0\in H_2(\To^2)$ implies that there exists a unique path concave path $\Lambda$ such that the edges are consecutive concatenations of the vectors $m_1(-p_1,q_1)\dots m_k(-p_k,q_k)$ and $\Lambda$ begins at the $(n,1)$-axis and ends at the $y$-axis.

\begin{rem}
Notice that each orbit $\alpha_i$ with homology $(q,p)$ in the Reeb orbit set $\alpha$ is rotated 90 degrees clockwise with respect to the corresponding edge $(-p,q)$ in the concave generator $\Lambda$.
\end{rem}

\subsection{A Perturbation Proposition.}
\label{corounding}

The following Proposition explain the relationship between the Reeb orbits sets of a concave toric lens space and the concave polygonal paths in $V_n$.

\begin{pro}
\label{pro:equicomgeo} Let $X_\Omega$ be a concave toric domain in $V_n$ and let $\lambda$ be the contact form over $\partial X_\Omega$. For each $k_0$ non-negative integer there exist $L_0>0$ such that if $L>L_0$ there exist a perturbation $\lambda^{L,k_0}$ arbitrary closed to $\lambda$ such that:
\begin{enumerate}[(a)]
    \item Every orbit with period less than $L$ is non-degenerate.
    \item For every $k\leq k_0$ we have that no orbit set $\alpha \in \text{ECC}_{k_0}^{L} (Y,\lambda^L)$ contains $e_+$ or $e_-$ with any multiplicity.    
    \item For every $k\leq k_0$, the map $\alpha\mapsto \Lambda$ is a bijection between the generators of $\text{ECC}_{k}^{L} (Y,\lambda_\epsilon,0)$ and the set $\{\Lambda:I(\Lambda)=k\}$. Futhermore $I(\alpha)=I(\Lambda)$ and $|\mathcal{A}(\alpha)-l_\Omega(\Lambda)|<1/L$. In particular, the set $\{\alpha:I(\alpha)=k\}$ is finite. 
\end{enumerate}
\end{pro}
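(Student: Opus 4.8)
The plan is to follow the strategy of \cite[Sec.~4]{Choi_2014} and \cite[Sec.~4.2]{hutchings2014lecture}, adapting the combinatorial computation of the ECH index to the lens space $L(n,1)$ and exploiting the concave perturbation of Lemma \ref{lem:convpertur} to push the exceptional orbits out of the low-index range. Fix $k_0$. After a preliminary $C^\infty$-small perturbation of $a$ I may assume $a'(0)$ and $a'(1)$ are not proportional to a primitive vector, so Lemmas \ref{lem:rotnumbers}, \ref{lem:convpertur} and \ref{lem:therealperturbation} apply; I then fix a large integer $N=N(k_0)$ (pinned down while proving (b)) and, for each $L$, set $\lambda^{L,k_0}:=\lambda^{L,N}$, the perturbation of Lemma \ref{lem:therealperturbation}. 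Part (a) is then immediate, since all orbits of period $<L$ are nondegenerate. I also enlarge $L_0$ so that every concave generator $\Lambda$ in $V_n$ with $I(\Lambda)\le k_0$ has $l_\Omega(\Lambda)<L$: from the definition \eqref{eq:echinccon} of $I(\Lambda)$ one gets $\mathcal{L}_n(\Lambda)\le k_0/2$, and an elementary lattice-point estimate then confines the relevant part of $\Lambda$ (the part carrying the tangent directions of $\partial^+\Omega$, which is all that contributes positively to $l_\Omega$) to a region depending only on $k_0$ and $\Omega$.

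The crux, and the main obstacle, is the combinatorial formula for the ECH index. To a generator $\alpha$ of $\ECC^L(Y,\lambda^{L,k_0},0)$ I attach a concave integral path $\Lambda_\alpha$ in $V_n$ as in Section \ref{sec:generators}: the Morse--Bott orbits $(\alpha_i,m_i)$, of homology $(q_i,p_i)$, contribute edges $m_i(-p_i,q_i)$ ordered by slope and labelled `e' or `h' according to the type of $\alpha_i$, while $e_+^{m_+}$ and $e_-^{m_-}$ (if present) contribute $m_+$ and $m_-$ further edges along the two axes of $V_n$, so that $\Lambda_\alpha$ always runs from the $(n,1)$-axis to the $y$-axis. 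The target identity is $I(\alpha)=2\mathcal{L}_n(\Lambda_\alpha)+h(\Lambda_\alpha)+\Delta(\alpha)$, where $\Delta(\alpha)\ge 0$ is supported on the exceptional edges and, by Lemma \ref{lem:convpertur}, satisfies $\Delta(\alpha)\ge N$ whenever $m_++m_-\ge 1$ (and $\Delta(\alpha)=0$ otherwise). The argument is the usual one: the Conley--Zehnder part of \eqref{ECHindex} is read off from $\mathrm{CZ}_\tau(\gamma_e^k)=-1$, $\mathrm{CZ}_\tau(\gamma_h)=0$ (Section \ref{Morse-Bott}) and, for the exceptional orbits, from Lemma \ref{lem:rotnumbers}; the relative class $Z\in H_2(Y,\alpha,\emptyset)$ (nonempty since $[\alpha]=0$ in $H_1(Y)$) is unique because $H_2(L(n,1))=0$, so one builds an explicit admissible representative of $Z$ out of the planar region under $\Lambda_\alpha$ and evaluates $c_\tau(Z)+Q_\tau(Z)$ as a count of lattice points, exactly as for $\Sp^3$ in \cite[Sec.~4.2]{hutchings2014lecture} and \cite[Sec.~4]{Choi_2014}; and a Pick-type identity converts the resulting ``area plus boundary'' count into $2\mathcal{L}_n(\Lambda_\alpha)+h(\Lambda_\alpha)$. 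The orbifold point of $M_n$ plays no role here, since all these surfaces lie in the symplectization of the smooth manifold $Y=\partial X_\Omega$; the genuinely new bookkeeping, relative to the $\Sp^3$ case, is that one axis of $V_n$ is the ray $\{t(n,1):t\ge0\}$ and the trivializations over $e_\pm$ are the ones fixed by \eqref{eq:Reebandstructure} and Lemma \ref{lem:rotnumbers}, so the change-of-trivialization factors $2k(\tau-\tau')$ must be carried correctly through $c_\tau$, $Q_\tau$ and $\mathrm{CZ}_\tau$ --- although the total is trivialization-independent by Proposition \ref{IndexECHproper}(a).

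Granting the formula, (b) is immediate: were some $\alpha\in\ECC^L_k(Y,\lambda^{L,k_0},0)$ with $k\le k_0$ to contain $e_+$ or $e_-$, then $I(\alpha)\ge \Delta(\alpha)\ge N>k_0$ once $N=N(k_0)$ is chosen $>k_0$, contradicting $I(\alpha)=k$. Hence for $k\le k_0$ no generator of $\ECC^L_k$ involves $e_\pm$.

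Finally (c). By (b), for $k\le k_0$ a generator $\alpha$ of $\ECC^L_k$ has no exceptional orbits, so $\Delta(\alpha)=0$, $\Lambda_\alpha$ is an honest concave generator, and $I(\alpha)=2\mathcal{L}_n(\Lambda_\alpha)+h(\Lambda_\alpha)=I(\Lambda_\alpha)$. The map $\alpha\mapsto\Lambda_\alpha$ is injective by construction; it is surjective onto $\{\Lambda:I(\Lambda)=k\}$ because any such $\Lambda$, being concave of bounded index hence of bounded size with edge-directions among the tangent directions of $\partial^+\Omega$, has $l_\Omega(\Lambda)<L$ by the choice of $L_0$, so the Reeb orbit set it names has action $<L$ and really is a generator of $\ECC^L_k$. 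Thus $\alpha\mapsto\Lambda_\alpha$ is a bijection, and finiteness of $\{\alpha:I(\alpha)=k\}$ follows from finiteness of $\{\Lambda:I(\Lambda)=k\}$. The estimate $|\mathcal{A}(\alpha)-l_\Omega(\Lambda_\alpha)|<1/L$ follows by shrinking the Morse--Bott perturbations so that the total discrepancy over the boundedly many edges of $\Lambda_\alpha$ between the perturbed-orbit actions and the torus actions $a(x)\times(-p,q)$ is $<1/L$, and then using Definition \ref{def:omegalength} together with the identification of the Reeb actions with $\Omega$-lengths recorded in Section \ref{sec:Reebdynamics} to recognise the sum of the torus actions as $l_\Omega(\Lambda_\alpha)$.
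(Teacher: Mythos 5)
Your proposal is correct and follows essentially the same route as the paper: the two-step (Morse--Bott plus concave) perturbation of Lemma \ref{lem:therealperturbation}, an explicit surface representative of the unique relative class used to evaluate $c_\tau+Q_\tau$ as a lattice-point count, Pick's theorem to arrive at $I(\alpha)=2\mathcal{L}_n(\Lambda_\alpha)+h(\Lambda_\alpha)$ plus a nonnegative exceptional-orbit contribution made larger than $k_0$ by the choice of $N$, which yields (b), and then the bijection and action estimate for (c). Your explicit remark that $L_0$ must be enlarged so that every $\Lambda$ with $I(\Lambda)\le k_0$ has $l_\Omega(\Lambda)<L$ makes the surjectivity step slightly more careful than the paper's write-up, but the argument is the same.
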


In case no explicit mention of the integer $k$ is needed we simple write $\lambda^{L}$.

\begin{defi}
    Given a concave toric domain $X_\Omega$ in $M_n$ and $L>0$. We say that a perturbation given by Proposition \ref{pro:equicomgeo} is an $L$-flat perturbation of $X_\Omega$. In case an explicit mention of the index $k$ we are working on we write $X^L$ is a $L$-flat perturbation with respect to $k$.
\end{defi}

\begin{rem}
    Proposition \ref{pro:equicomgeo} can be improved to describe the ECH differential of concave toric lens space. However, we do not take that approach here.
\end{rem}

The following three subsections are dedicated to the proof of Proposition \ref{pro:equicomgeo}. An outline of the proof is as follows, we choose a perturbation of $X_\Omega$ based on the perturbations explained in Section \ref{sec:filtrateeccforconcavetoricdomain}. The crucial part is to check that the ECH index for this perturbation coincides with the combinatorial index defined in \ref{sec:generators}. In Section \ref{sec:compindex} we calculate the ECH index and argue that every part of the Proposition follows. For complete details of the proof see the end of Section \ref{sec:compindex}.

\subsection{Computations of the index}
\label{sec:compindex}
For $L>0$ and some $N$ positive integer let $\lambda^{L,N}$ a perturbation as the one given by Lemma \ref{lem:therealperturbation}. We begin by computing the $\text{ECH}$ index of an orbit set $\alpha=\{(\alpha_i,m_i)\}$ contained in $\text{ECC}^L(Y_{L,N},\lambda^{L,N},0)$.

\subsubsection{$\text{ECH}$ index}
\label{ECHindexconcave}

Before, we do the calculations we need to construct an auxiliary path $\Lambda$. Let $\alpha=\{(e_+,m_+)\}\cup\{(\alpha_i,m_i)\}\cup\{(e_-,m_-)\}$ where $\{(\alpha_i,m_i)\}$ does not contain exceptional orbits. Let $\alpha'=\{(\alpha_i,m_i)\}$,  we associate to $\alpha'$ a concave generator $\Lambda'$ as explained in subsection \ref{sec:generators}. As explain in Section \ref{sec:fundamentalgroup} there exist unique $k$ and $l$ integers such that

\begin{equation}
    \label{eq:homologyauxiliary}
    m_1(-1,0)+[\Lambda']+m_2(-1,0)=k(0,1)+l(n,1)
\end{equation}

We define the auxiliary path $\Lambda$ by adding $m_1$ times the vector $(-1,0)$ at the right of $\Lambda'$ and adding $m_2$ times the vector $(-1,0)$ at the left of $\Lambda'$. This auxiliary path has zero homology in $H_1(Y)$ as we can see from Equation \ref{eq:homologyauxiliary}. Notice that this path it is not concave nor convex. We think of $\Lambda$ as beggining at the $(n,1)$-axis and ending in the $y$-axis, notice that it is exactly one way this can be done such that $\Lambda$ is completely contained in $V_n$. 

\begin{lemm}
    \label{explicitindex}
    Suppose that $X_\Omega$ is a concave toric domain in $M_n$. Let $L>0$ be a real number and $N$ a positive integer. Consider the perturbation $Y_{L,N}$ of $Y=\partial X_\Omega$ given by Lemma \ref{lem:therealperturbation}. 
     Let $\alpha=\{(e_+,m_+)\}\cup\{(\alpha_i,m_i)\}\cup\{(e_-,m_-)\}$ be an  orbit set in $\text{ECC}^L(Y_{L,N},\lambda_{L,N},0)$ and $\Lambda$ be the auxiliary path explained at the begining of this section. Then $I(\alpha,Z)$ does not depend on $Z\in H_2(\alpha,\emptyset,Z)$, futhermore
     \begin{enumerate}       
        
         \item (\textit{Relative Chern Class}) $c_\tau(\alpha)=c_1+c_2$ where $c_1\in \mathbb{Z}$ is the maximal integer such that $c_1(n,1)$ is contained in $\Lambda$. Analogously $c_2\in \mathbb{Z}$ is the maximal integer such that $c_2(0,1)$ is contained in $\Lambda$.

         \item (\textit{Relative self-intersection}) $Q_{\tau}(\alpha)=2A(\Lambda)$ where $A(\Lambda)$ denotes the area of the region defined by $\Lambda$ and the axis.

        \item (\textit{Conley-Zehnder Number}) Denote by $e$ the total number of elliptic orbits in $\{(\alpha_i,m_i)\}$. Then

         \begin{equation}
         \label{CZeq}
         CZ_{\tau}(\alpha)=-e+m_++m_-+ 2\displaystyle\sum_{i=1}^{m_+}\left\lfloor -i\displaystyle\frac{a_2'(0)}{a'(0)\times (n,1)} \right\rfloor+2\displaystyle\sum_{i=1}^{m_-}\left\lfloor -j \displaystyle\frac{a_2'(1)}{a_1(1)} \right\rfloor
         \end{equation}
         
    \end{enumerate}
        
\end{lemm}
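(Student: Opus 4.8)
## Proof plan for Lemma \ref{explicitindex}

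The plan is to compute each of the three terms in the ECH index formula \eqref{ECHindex} separately, using a convenient trivialization $\tau$ over all the orbits involved, and then to verify that the sum is independent of $Z$ (which must follow because $c_1(\xi) + 2\,\mathrm{PD}(\Gamma)$ is torsion on $L(n,1)$, so the index ambiguity formula in Proposition \ref{IndexECHproper}(d) gives no ambiguity). The trivialization I would use is the $\To^2$-invariant one coming from Equation \eqref{eq:Reebandstructure} over the exceptional orbits $e_\pm$, and over each perturbed torus orbit the natural trivialization in which the linearized return map is a small negative rotation (as arranged in Section \ref{Morse-Bott}); with this choice the Conley--Zehnder indices of the perturbed torus orbits are the constants $-1$ (elliptic) and $0$ (hyperbolic) already recorded, which immediately gives the $-e$ contribution, while the exceptional orbit contributions come from \eqref{eq:CZelliptic} together with the rotation numbers $\phi_\pm$ from Lemma \ref{lem:rotnumbers}. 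This yields part (3) essentially by bookkeeping: $\mathrm{CZ}_\tau(e_+^{m_+}) = \sum_{i=1}^{m_+}(2\lfloor i\phi_+\rfloor + 1) = m_+ + 2\sum_{i=1}^{m_+}\lfloor -i a_2'(0)/(a'(0)\times(n,1))\rfloor$ and similarly for $e_-$.

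For part (1), the relative Chern class: I would take an admissible surface $S$ representing $Z \in H_2(\alpha,\emptyset)$ and compute $c_\tau(Z) = c_1(\xi|_S,\tau)$ as a signed count of zeros of a $\tau$-trivial section of $f^*\xi$. The standard technique here (as in \cite[Sec.~3.7]{hutchings2014lecture} and \cite[Sec.~3.4]{Choi_2014}) is to use the torus-fibration structure: $\xi$ is spanned by $\partial_x$ and a torus direction, and the count localizes at the two singular fibers (where the torus orbits of $V_n$ collapse), contributing exactly the number of times the auxiliary path $\Lambda$ runs along the $(n,1)$-axis and the $y$-axis respectively — that is $c_1 + c_2$. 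The appearance of the auxiliary path $\Lambda$ (rather than the concave generator $\Lambda'$) is exactly to absorb the $e_\pm$ contributions, which correspond to edges along the axes; the multiplicities $m_\pm$ determine how far $\Lambda$ extends along each axis.

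For part (2), the relative self-intersection $Q_\tau(Z)$: I would use the formula \eqref{relativeQtau}, $Q_\tau(Z) = c_1(N,\tau) - w_\tau(S)$, but the cleaner route is the combinatorial one — a $\tau$-representative can be built fibered over the region in $V_n$ bounded by $\Lambda$ and the axis, and the self-intersection count equals twice the Euclidean area $A(\Lambda)$ of that region, in direct parallel with \cite[Lem.~3.3 or Sec.~3.4]{Choi_2014} where toric self-intersection numbers are computed as (twice) areas of polygonal regions. I would need to check that the $\mathbb{Z}_n$-orbifold quotient defining $M_n$ does not introduce correction terms into this area count — this is where the ``$(n,1)$-axis'' geometry matters, and is the one genuinely new verification compared to the $\mathbb{C}^2$ case. \textbf{The main obstacle} will be exactly this: carefully tracking how the non-standard cone $V_n$ and the collapsing of the orbit $-\partial_{t_1}+n\partial_{t_2}$ on one axis affect the trivialization-dependent counts (especially $w_\tau$ and $c_1(N,\tau)$ near the $e_+$ end), and confirming that all $n$-dependence is correctly packaged into the auxiliary path $\Lambda$ and the rotation number $\phi_+ = -a_2'(0)/(a'(0)\times(n,1))$ rather than leaking into the area or Chern-class formulas. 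Once all three terms are in hand, independence of $Z$ follows from Proposition \ref{IndexECHproper}(d) and the fact that $H_1(L(n,1))$ is torsion, completing the proof.
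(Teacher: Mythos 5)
Your plan follows essentially the same route as the paper: fix the $\To^2$-invariant trivialization from Lemma \ref{lem:rotnumbers}, get part (3) by direct bookkeeping with \eqref{eq:CZelliptic} and the Morse--Bott perturbation, localize the relative Chern class at the two collapsed fibers (the paper realizes this with an explicit surface built from cylinders and surgeries, whose collapsed disks carry the $c_1+c_2$ zeros of the section $x(1-x)\partial_x$), and compute $Q_\tau$ via \eqref{relativeQtau} as twice the lattice area, exactly as in the paper's surgery-point count. The only cosmetic difference is that the paper gets $Z$-independence immediately from $H_2(L(n,1))=0$ rather than from the index ambiguity formula, but both are valid.
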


\begin{proof}
Since $H_2(Y)=0$ the ECH index does not depend on the the relative homology $Z\in H_2(\alpha,\emptyset,Y)$. Now we construct a surface $S$ such that $\partial S=\alpha$.

The construction of the $S$ is a very classical argument and can be found in different forms in \cite{Choi_2014,cristofaro2020proof,hutchings2014lecture} and others. Here we modify that construction to fit our case. 
  We can construct a surface $S$ in $[-1,1]\times Y$ such that $[S]\in H_2(\alpha,\beta,Y)$. Then we use this manifold to compute $Q_\tau$ and $C_\tau$.
        
    \textit{Construction of the surface $S$:} 
        Consider the projections $\pi:[0,1]\times \To^2\rightarrow L(n,1)$ and consider the natural lifts of the orbits $\alpha'_i$. For the orbit $e_+$ we choose any orbit $e'_+$ in $\{0\}\times \To^2$ with homology $(0,1)$ and $e_-$ choose any orbit $e_-'$ in $\{1\}\times \To^2$ with homology $(0,1)$. Denote by $0=x_+<x_1<\cdots <x_M<x_-=1$  with $M=\sum m_i$ and each $x_i$ represent the point ${x_i}\times \To^2$ at which $\alpha_i$ appears.

        We construct this surface in three steps.
        
        \textbf{Step 1:} \textit{Disjoint Cylinders.} We now describe a construction of disjoint cylinders $\mathcal{C}$. At level $\{1\}\times [0,1]\times \To^2$ we realize the following procedure: for each $\alpha_i$ with multiplicity $m_i$  to obtained a family of trivial cylinders in $\mathbb{R}\times [0,1]\times \To^2$. Choose $m_i$ points $x_{i1},\dots,x_{im_i}$ in a small neighborhood of $x_i$ and not containing any other $x_j$ with $i\not=j$. For each $x_{ik}$ choose an orbit with homology $[\alpha_i$] disjoint from all the others. We do the same for the orbits $e'_+$ and $e'_-$. 
        By following the $s$ direction downwards up to $\{0\}\times \{0\}\times \To^2$ we obtained a set $\mathcal{C}_1$ of disjoint cylinders. 
                
        \textbf{Step 2:} 
        \textit{Construction of the surface $S'$.}
        By the homological conditions we have that 
        $$[\alpha]=-c_1\,(1,-n)+c_2\,(0,1)$$
        Begin with $c_1$ disjoint orbits with homology $(n,1)$ in $\{0\}\times \{0\}\times \To^2$ move this orbits in the $x$ direction forming horizontal cilinders. Each time these cylinders encounter a vertical cilinder we realize negative surgeries similar to \cite{hutchings2005periodic}, in that way we resolve the singularities. After crossing every vertical cylinder we have $c_2$ cylinders in the $x$ direction with homology $(0,1)$. To end this step we make a slighty perturbation near $\{1\}\times[0,1]\times \To^2$ such that $\alpha$ is the boundary on $\{1\}\times[0,1]\times \To^2$ of the obtained surface $S'$. 

        \textbf{Step 3:} \textit{Projecting the surface $S'$ to obtain $S$.} Consider the projection of $S'$ by the quotient map of $\pi:\mathbb{R}\times [0,1]\times \To^2\rightarrow \mathbb{R}\times L(n,1)$. Note that the $c_1$ cilinders in the $x$ direction with homology $(1,-n)$ colapses into disks. Similarly, the $c_2$ cylinders in the $x$ direction with homology $(0,1)$ also colapses into disks. 

        This ends the construction of the surface $S$. 
        
        We now use this surface to compute $c_\tau$ and $Q_\tau$. See figure \ref{TheSurface} for a schematic picture of the surface $S'$ when projected into $[-1,1]\times [0,1]$.

        \begin{figure}
            \centering
            \tikzset{every picture/.style={line width=0.75pt}} 

\begin{tikzpicture}[x=0.55pt,y=0.55pt,yscale=-1,xscale=1]

\draw    (39,112) -- (226,111.01) ;
\draw [shift={(228,111)}, rotate = 179.7] [color={rgb, 255:red, 0; green, 0; blue, 0 }  ][line width=0.75]    (10.93,-3.29) .. controls (6.95,-1.4) and (3.31,-0.3) .. (0,0) .. controls (3.31,0.3) and (6.95,1.4) .. (10.93,3.29)   ;
\draw    (62,179) -- (61.01,19) ;
\draw [shift={(61,17)}, rotate = 89.65] [color={rgb, 255:red, 0; green, 0; blue, 0 }  ][line width=0.75]    (10.93,-3.29) .. controls (6.95,-1.4) and (3.31,-0.3) .. (0,0) .. controls (3.31,0.3) and (6.95,1.4) .. (10.93,3.29)   ;
\draw [color={rgb, 255:red, 4; green, 4; blue, 253 }  ,draw opacity=1 ]   (79,111) -- (79,21) ;
\draw [color={rgb, 255:red, 4; green, 4; blue, 253 }  ,draw opacity=1 ]   (71,111) -- (71,21) ;
\draw [color={rgb, 255:red, 4; green, 4; blue, 253 }  ,draw opacity=1 ]   (112,112) -- (112,21) ;
\draw [color={rgb, 255:red, 4; green, 4; blue, 253 }  ,draw opacity=1 ]   (120,113) -- (121,22) ;
\draw [color={rgb, 255:red, 4; green, 4; blue, 253 }  ,draw opacity=1 ]   (127,112) -- (127,22) ;
\draw [color={rgb, 255:red, 4; green, 4; blue, 253 }  ,draw opacity=1 ]   (218,111) -- (218,21) ;
\draw [color={rgb, 255:red, 4; green, 4; blue, 253 }  ,draw opacity=1 ]   (208,111) -- (208,21) ;
\draw [color={rgb, 255:red, 4; green, 4; blue, 253 }  ,draw opacity=1 ]   (200,111) -- (200,21) ;
\draw    (338,114) -- (524,114) ;
\draw [shift={(526,114)}, rotate = 180] [color={rgb, 255:red, 0; green, 0; blue, 0 }  ][line width=0.75]    (10.93,-3.29) .. controls (6.95,-1.4) and (3.31,-0.3) .. (0,0) .. controls (3.31,0.3) and (6.95,1.4) .. (10.93,3.29)   ;
\draw    (351,180) -- (350.01,20) ;
\draw [shift={(350,18)}, rotate = 89.65] [color={rgb, 255:red, 0; green, 0; blue, 0 }  ][line width=0.75]    (10.93,-3.29) .. controls (6.95,-1.4) and (3.31,-0.3) .. (0,0) .. controls (3.31,0.3) and (6.95,1.4) .. (10.93,3.29)   ;
\draw [color={rgb, 255:red, 4; green, 4; blue, 253 }  ,draw opacity=1 ]   (368,112) -- (369,41) ;
\draw [color={rgb, 255:red, 4; green, 4; blue, 253 }  ,draw opacity=1 ]   (361,112) -- (361,39) ;
\draw [color={rgb, 255:red, 4; green, 4; blue, 253 }  ,draw opacity=1 ]   (401,113) -- (401,41) ;
\draw [color={rgb, 255:red, 4; green, 4; blue, 253 }  ,draw opacity=1 ]   (408,113) -- (408.53,64.99) -- (409,41) ;
\draw [color={rgb, 255:red, 4; green, 4; blue, 253 }  ,draw opacity=1 ]   (416,113) -- (417,42) ;
\draw [color={rgb, 255:red, 4; green, 4; blue, 253 }  ,draw opacity=1 ]   (507,112) -- (508,44) ;
\draw [color={rgb, 255:red, 4; green, 4; blue, 253 }  ,draw opacity=1 ]   (497,112) -- (498,43) ;
\draw [color={rgb, 255:red, 4; green, 4; blue, 253 }  ,draw opacity=1 ]   (489,112) -- (488,42) ;
\draw [color={rgb, 255:red, 255; green, 0; blue, 0 }  ,draw opacity=1 ]   (351,112) -- (516,112) ;
\draw [color={rgb, 255:red, 4; green, 4; blue, 253 }  ,draw opacity=1 ]   (361,39) -- (350,18) ;
\draw [color={rgb, 255:red, 4; green, 4; blue, 253 }  ,draw opacity=1 ]   (369,41) -- (350,18) ;
\draw [color={rgb, 255:red, 4; green, 4; blue, 253 }  ,draw opacity=1 ]   (401,41) -- (398,23) ;
\draw [color={rgb, 255:red, 4; green, 4; blue, 253 }  ,draw opacity=1 ]   (409,41) -- (398,23) ;
\draw [color={rgb, 255:red, 4; green, 4; blue, 253 }  ,draw opacity=1 ]   (417,42) -- (398,23) ;
\draw [color={rgb, 255:red, 4; green, 4; blue, 253 }  ,draw opacity=1 ]   (508,44) -- (519,23) ;
\draw [color={rgb, 255:red, 4; green, 4; blue, 253 }  ,draw opacity=1 ]   (498,43) -- (519,23) ;
\draw [color={rgb, 255:red, 4; green, 4; blue, 253 }  ,draw opacity=1 ]   (488,42) -- (519,23) ;

\draw (232,101.4) node [anchor=north west][inner sep=0.75pt]    {$[ 0,1] \times T^{2}$};
\draw (122,147) node [anchor=north west][inner sep=0.75pt]   [align=left] {Step 1};
\draw (526,104.4) node [anchor=north west][inner sep=0.75pt]    {$[ 0,1] \times T^{2}$};
\draw (411,148) node [anchor=north west][inner sep=0.75pt]   [align=left] {Step 2};
\draw (9,25.4) node [anchor=north west][inner sep=0.75pt]    {$[ -1,1]$};
\draw (292,25.4) node [anchor=north west][inner sep=0.75pt]    {$[ -1,1]$};
\draw (50,-1.6) node [anchor=north west][inner sep=0.75pt]    {$e_{+}$};
\draw (213,0.4) node [anchor=north west][inner sep=0.75pt]    {$e_{-}$};
\draw (97,0.4) node [anchor=north west][inner sep=0.75pt]    {$\alpha _{i}$};
\draw (340,-3.6) node [anchor=north west][inner sep=0.75pt]    {$e_{+}$};
\draw (382,1.4) node [anchor=north west][inner sep=0.75pt]    {$\alpha _{i}$};
\draw (500,2.4) node [anchor=north west][inner sep=0.75pt]    {$e_{-}$};

\end{tikzpicture}
            \caption{Schematic representation of the surface $C$ and $S'$ in lemma \ref{explicitindex}. In this case $\alpha=\{(e_+,2),(\alpha_i,3),(e_-,2)\}$.}
            \label{TheSurface}
        \end{figure}
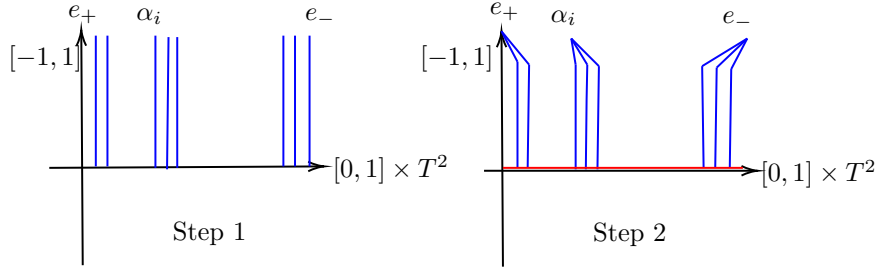

\begin{enumerate}
    \item (\textit{Relative Chern Class}) 
    We need a a generic section on $\xi|_S$ trivial with respect to $\tau$. Denote by $\eta$ the vector field over $S$ defined as $x(1-x)\partial_x$. It is easy to check that $\eta$ is a generic vector field trivial with respect to $\tau$. Notice that $\eta$ is $0$ exactly on the disks on $S$ obtained by the quotient that transform $S'$ into $S$. Therefore, $c_\tau(S)=\#\eta^{-1}(0)=c_1+c_2$.
    
    \item (\textit{Relative self-intersection}) 
        To calculate $Q_\tau(S)$ we use the expression given in equation $\eqref{relativeQtau}$ 
            $$Q_\tau(S)=c_1(NS,\tau)+w_\tau(S)$$
        By construction $w_\tau(S)=0$. Let $\phi$ be the field obtained by projecting $\partial_s+\partial_x$ into $NS$. Notice that $\phi$ is $0$ exactly on the surgery points obtained by resolving singularities in step $2$. When resolving the singularites we obtained a number of zeros equal a determinant $\phi$ given by the resolution of the singularities. Since the determinant can be interpret as and area, a carefully organization of the terms will lead us to conclude that $\#\phi^{-1}(0)=2 A(\Lambda')$.
            
       \item (\textit{Conley-Zehnder Number}) Notice that equation \eqref{CZeq} follows directly from Lemma \ref{eq:CZelliptic}, the properties of a Morse-Bott perturbation explained in \ref{Morse-Bott} and \ref{lem:rotnumbers}. 
\end{enumerate}
\end{proof}

With this calculation in place we can conclude the proof of the part (b) of Proposition \ref{pro:equicomgeo}.

\begin{proof}[Proof of Proposition \ref{pro:equicomgeo}]
    
    Notice that from the above Proposition
    \begin{equation}
        I(\alpha)=2A(\Lambda)+c_1+c_2-e+m_++m_-+ 2\displaystyle\sum_{i=1}^{m_+}\left\lfloor -i\displaystyle\frac{a_2'(0)}{a'(0)\times (n,1)} \right\rfloor+2\displaystyle\sum_{i=1}^{m_-}\left\lfloor -j \displaystyle\frac{a_2'(1)}{a_1(1)} \right\rfloor
    \end{equation}
    
    We simplify this equation Using Pick's theorem on $Q_\tau$, which give us$$Q_\tau(\alpha)=2\iota(\Lambda_\alpha)+c_1+c_2+m_++m_-+e+h-1$$
    Where $\iota(\Lambda)$ is the count of the interior points of the closed region defined by $\Lambda$ and the axes without the boundary. Then 
    \begin{equation}
        \label{eq:comindexv1}
        \begin{split}
        I(\alpha)=&2\mathcal{L}(\Lambda)+2m_++2m_-+h
        \\+& 2\displaystyle\sum_{i=1}^{m_+}\left\lfloor -i\displaystyle\frac{a_2'(0)}{a'(0)\times (n,1)} \right\rfloor+2\displaystyle\sum_{i=1}^{m_-}\left\lfloor -j \displaystyle\frac{a_2'(1)}{a_1(1)} \right\rfloor
        \end{split}
    \end{equation}
    Now suppose that $\alpha$ does not contain exceptional orbits. This equation simplifies to 
    \begin{equation}
    \label{eq:comindexv2}
            I(\alpha)=2\mathcal{L}(\Lambda_\alpha)+h
    \end{equation}
    This in particular proves that $I(\alpha)=I(\Lambda)$ when $\alpha$ does not have exceptional orbits. Futhermore, this implies that the map $\alpha\mapsto\Lambda$ is a bijection between $\{\alpha:I(\alpha)=k\}$ and $\{\Lambda:I(\Lambda)=k\}$ for $L_0>0$ and $N$ big enough, in particular we take $N$ bigger than $k_0$. 
    
    We claim that the properties \textit{(a), (b), (c)} hold for the perturbation $\lambda^{L,N}$ with $L>L_0$.
    \begin{enumerate}[(a)]
        \item Notice that this follows from Lemma \ref{lem:therealperturbation}.
        \item From Equation \ref{eq:comindexv1} we notice that if $\alpha$ has an exceptional orbit $I(\alpha)>k_0$. Therefore, this item follows. 
        \item Now suppose that $\alpha$ does not contained exceptional orbits. We already argue that the map $\alpha\mapsto \Lambda$ is a bijection when we fixed $k$ under this perturbation. By Lemma \ref{lem:therealperturbation} the action of $\alpha$ and the $\Omega$-length of $\Lambda$ are $(1/L)$-closed.
    \end{enumerate}
    This finish the proof.
\end{proof}

\subsubsection{Embedded Contact Homology of Lens Spaces}
\label{ECH-of-Lens-Spaces}

Before continuing with the proof of Proposition \ref{pro:equicomgeo} we use the calculation of section \ref{ECHindexconcave} to compute the embedded contact homology of the lens spaces $L(n,1)$ when $\Gamma=0$. Here we use analogous arguments to the ones given in \cite[Sec.~3.7]{hutchings2014lecture} and in \cite[Theo.~7.6]{nelson2022embedded}. From these calculations we will deduce Lemma \ref{lem:capacitiesellipsoids} and with making use of Proposition \ref{pro:equicomgeo} we also prove Theorem \ref{thm:concavecapacities}.

\begin{lemm}[ECH index of the irrational ellipsoid with singularities] 
Suposse that $a$ and $b$ are numbers such that $a/b$ is irrational. We say that $X_\Omega=E_n(a,b)$ is an \textit{irrational ellipsoid with singularities}. The only Reeb orbits of the boundary $Y=\partial X_\Omega$ are the exceptional orbits $e^+$ and $e^-$ and  
\begin{align}
\label{eq:indexorbllipse}
I(e_+^re_-^s)&=nk^2+nk+2\displaystyle\sum_{i=1}^r\left(\left\lfloor i\displaystyle\frac{a-b}{nb} \right\rfloor +1\right)+2\displaystyle\sum_{i=1}^s\left(\left\lfloor i \displaystyle\frac{b-a}{na} \right\rfloor +1\right)    \\
&=nk(k+1)+2k+2\displaystyle\sum_{i=1}^r\left\lfloor i\displaystyle\frac{a-b}{nb} \right\rfloor+2\displaystyle\sum_{i=1}^s\left\lfloor i \displaystyle\frac{b-a}{na} \right\rfloor
\end{align}
Where $r+s=kn$.
\end{lemm}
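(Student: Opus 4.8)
The plan is to specialize the general index computation of Lemma~\ref{explicitindex} to the orbit set $\alpha = e_+^r e_-^s$ on $Y=\partial E_n(a,b)$, and then carry out the resulting elementary lattice‑point count.

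\emph{Step 1 --- the Reeb orbits and the auxiliary path.} Since $a/b$ is irrational, $\partial^+\Omega$ is a single straight segment, so $a'$ is the constant vector pointing from $a(n,1)$ to $b(0,1)$ and has irrational slope. By the description of the Reeb dynamics in Section~\ref{sec:Reebdynamics}, a Morse--Bott torus of Reeb orbits occurs only at a parameter where $a'$ is proportional to a primitive integer vector, which never happens here; hence the only closed Reeb orbits of $\lambda_a$ are the exceptional orbits $e_+$ and $e_-$. They are elliptic, and irrationality of $a/b$ forces $a'(0)$ and $a'(1)$ not to be proportional to a primitive vector, so they are non-degenerate and no concave perturbation is required: the index computation of Lemma~\ref{explicitindex} applies verbatim to $Y$ itself. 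Since $[e_+]=[e_-]$ generates $H_1(L(n,1))\cong\Z/n$, the class $[\alpha]=(r+s)[e_+]$ vanishes exactly when $n\mid r+s$; write $r+s=kn$. The auxiliary path $\Lambda$ attached to $\alpha$ in Section~\ref{ECHindexconcave} is then the empty path $\Lambda'$ padded by $r+s$ copies of $(-1,0)$, i.e.\ the horizontal segment from the lattice point $(kn,k)$ on the $(n,1)$-axis to $(0,k)$ on the $y$-axis; in particular $h(\Lambda)=0$.

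\emph{Step 2 --- assembling the index.} Feeding this into Lemma~\ref{explicitindex} (equivalently formula~\eqref{eq:comindexv1}) the $h$-term drops out and
\begin{equation*}
I(e_+^r e_-^s)=2\mathcal{L}_n(\Lambda)+2r+2s+2\sum_{i=1}^{r}\left\lfloor i\phi_+\right\rfloor+2\sum_{j=1}^{s}\left\lfloor j\phi_-\right\rfloor,
\end{equation*}
where $\phi_\pm$ are the rotation numbers of $e_\pm$. Substituting the constant derivative of the segment into Lemma~\ref{lem:rotnumbers} identifies $\phi_+$ and $\phi_-$ with $\tfrac{a-b}{nb}$ and $\tfrac{b-a}{na}$ respectively. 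It then remains to evaluate $2\mathcal{L}_n(\Lambda)+2r+2s$: here $\mathcal{L}_n(\Lambda)$ is the number of lattice points in the triangle with vertices $(0,0)$, $(kn,k)$, $(0,k)$ that do not lie on $\Lambda$, which a row-by-row sum over $y=1,\dots,k-1$ (or Pick's theorem, the triangle having area $nk^2/2$ and $kn+2k$ boundary lattice points) evaluates to $\tfrac{nk^2-nk}{2}+k$. Plugging in and using $r+s=kn$ gives the index formula claimed in the Lemma.

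\emph{Main obstacle.} All of the geometric input is already isolated in Lemma~\ref{explicitindex} and Lemma~\ref{lem:rotnumbers}, so the only real work is bookkeeping: correctly reading off the degenerate (straight-segment) auxiliary path and its lattice-point count, and keeping the sign and orientation conventions for the rotation numbers and the cross product consistent, since a slip there would perturb the floor functions. As sanity checks I would verify $I(\emptyset)=0$ and compare, in the case $n=1$, $(r,s)=(1,0)$, with the classical ellipsoid, where $I(e_+)=2\bigl(1+\lfloor a/b\rfloor\bigr)$ is known.
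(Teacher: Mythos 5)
Your proposal is correct and follows essentially the same route as the paper: identify the Reeb orbits from the description in Section~\ref{sec:Reebdynamics}, take the rotation numbers from Lemma~\ref{lem:rotnumbers}, and specialize the index computation of Lemma~\ref{explicitindex} to the auxiliary path running from $k(n,1)$ to $k(0,1)$. The one genuine difference is in the bookkeeping: you assemble the index through the combinatorial formula \eqref{eq:comindexv1}, i.e.\ $I=2\mathcal{L}_n(\Lambda)+2(r+s)+2\sum_{i=1}^r\lfloor i\phi_+\rfloor+2\sum_{j=1}^s\lfloor j\phi_-\rfloor$, together with an explicit row-by-row lattice count, whereas the paper quotes $Q_\tau=nk^2$ and $c_\tau=nk$ and adds the Conley--Zehnder terms directly. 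Your version is the more reliable of the two: Lemma~\ref{explicitindex} applied to this path gives $c_1=c_2=k$, hence $c_\tau=2k$ rather than $nk$, and your count $\mathcal{L}_n(\Lambda)=\tfrac{nk^2-nk}{2}+k$ lands exactly on the second displayed line of the statement. Note that the two displayed lines of the lemma agree only when $n=1$ (they differ by $2nk$ versus $2k$), and a direct check of the grading against the position of $ra+sb$ in the sequence $N^n(a,b)$ --- for instance $n=3$, $a=1$, $b=1+\epsilon$, $(r,s)=(3,0)$, which must have index $2$ --- confirms that the second line is the correct one; so your derivation implicitly corrects both the first displayed line and the intermediate value of $c_\tau$ in the paper's proof. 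The only point you gloss over is the sign-sensitive identification of $\phi_\pm$ with $\tfrac{a-b}{nb}$ and $\tfrac{b-a}{na}$, which you rightly flag as the delicate step; since your final formula passes the parity and $n=1$ sanity checks, the conventions you adopted are the consistent ones.
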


\begin{proof}
 Notice that the claims about the elliptic orbits $e_+$ and $e_-$ follows from the comments at the beggining of Section \ref{sec:Reebdynamics}. Futhermore, we have that the rotations numbers are equal to $\phi_+=\displaystyle\frac{a-b}{nb}$ and $\phi_-=\displaystyle\frac{b-a}{na}$ by Lemma \ref{lem:rotnumbers}. 
By Lemma \ref{explicitindex} it follows that $Q_\tau(e_+^re_-^s)=k^2n$ and  $c_\tau(e_+^re_-^s)=nk$. This is enough to calculate the index and conclude the lemma.
\end{proof}
 
Now we prove the following central proposition.

\begin{pro}
\label{pro:bijectieven}
Let $Y=\partial E_{n}(a,b)$ such that $b/a$ is irrational. The $\text{ECH}$ index $I$ is a bijective map betweeen the Reeb orbits sets of $Y$ and the even numbers. 
\end{pro}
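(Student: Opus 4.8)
The plan is to exhibit an explicit, strictly increasing (hence injective) enumeration of the Reeb orbit sets of $Y = \partial E_n(a,b)$ by the ECH index, and then to argue surjectivity onto the even nonnegative integers by a counting/gap argument. First I would recall from the preceding lemma that the only Reeb orbits of $Y$ are the two exceptional elliptic orbits $e_+$ and $e_-$, so that an orbit set is simply a monomial $e_+^r e_-^s$ with $r,s \ge 0$; by the index-parity property (Proposition \ref{IndexECHproper}(c)), since both orbits are elliptic, every such generator has even ECH index, so $I$ does land in the even integers. Thus the content is that $I$ restricted to $\{e_+^r e_-^s : r,s\ge 0\}$ is a bijection onto $2\mathbb{Z}_{\ge 0}$.

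For injectivity I would use the explicit formula
$$I(e_+^r e_-^s) = nk(k+1) + 2k + 2\sum_{i=1}^{r}\left\lfloor i\frac{a-b}{nb}\right\rfloor + 2\sum_{i=1}^{s}\left\lfloor i\frac{b-a}{na}\right\rfloor, \qquad r+s = kn,$$
together with the observation that $I$ is just $2$ times the combinatorial quantity $\mathcal{L}_n(\Lambda)$ attached to the associated concave path (Equation \ref{eq:comindexv2} with $h=0$, since all relevant generators are labelled `e'). The key monotonicity input is the inverse triangle inequality for $l_\Omega$ (inequality \ref{ine:inv.triangle.inequality}) and Corollary \ref{col:cor-red-action}: increasing $r$ or $s$ by one strictly enlarges the lattice-point count $\mathcal{L}_n$, because $b/a$ irrational forces all the relevant tangent lines through lattice points to be distinct, so no two distinct orbit sets can enclose the same set of lattice points. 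This is the standard argument from \cite[Sec.~3.7]{hutchings2014lecture} for the irrational ellipsoid, adapted to $V_n$ and $L(n,1)$, and I expect it to go through after checking that the $\mathbb{Z}_n$-quotient does not collapse distinct generators — which it does not, since $e_+, e_-$ remain distinct embedded orbits.

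For surjectivity the cleanest route is a dimension/counting argument: order the generators $e_+^r e_-^s$ by their symplectic action $\mathcal{A}(e_+^r e_-^s) = ra(0)\times(n,1) + s\,a_2(1)$ (irrational ratio again guarantees all actions are distinct), list them as $\beta_0, \beta_1, \beta_2, \dots$ in increasing action, and show $I(\beta_j) = 2j$ by induction. The induction step reduces to showing that passing from the $j$-th to the $(j{+}1)$-st generator in the action order increases $\mathcal{L}_n$ by exactly one lattice point — equivalently, that the concave paths associated to successive generators differ by "corounding a single corner." I would deduce this from the characterization of the path $\Lambda$ as the boundary of the convex hull of lattice points cut off by a line of the appropriate slope (as in the proof of Lemma \ref{lem:capellip}), sweeping the line outward continuously: each time the enclosed lattice-point count jumps, it jumps by one (generically), and each such value of the count is realized. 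The main obstacle I anticipate is precisely this "jumps by exactly one" claim: one must rule out the possibility that sweeping the supporting line simultaneously absorbs two lattice points, which is exactly where irrationality of $b/a$ is used, and one must also confirm that every even value is attained and not skipped — i.e. that the action order and the index order agree. Once that synchronization between the action filtration and the index is established, the bijection follows, and this is also exactly the input needed to read off $c_k(E_n(a,b)) = N_k^n(a,b)$ for Lemma \ref{lem:capacitiesellipsoids}.
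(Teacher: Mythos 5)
Your overall strategy is the right one --- identify $I(e_+^r e_-^s)$ with twice a lattice-point count attached to a line of slope $b/a$, then use irrationality to see that these counts run bijectively through the nonnegative integers; this is the same mechanism the paper uses. But the central identity on which everything rests is not actually justified in your write-up, and the justification you point to does not apply. Equation \eqref{eq:comindexv2}, $I(\alpha)=2\mathcal{L}(\Lambda_\alpha)+h$, is derived only under the hypothesis that $\alpha$ contains \emph{no} exceptional orbits; for the irrational ellipsoid every nonempty generator is a product of the exceptional orbits $e_+$ and $e_-$, so that simplification is exactly the one you are not allowed to invoke. What you must work with is the unsimplified formula \eqref{eq:comindexv1} (equivalently \eqref{eq:indexorbllipse}), whose floor sums $\sum_{i=1}^{r}\lfloor i(a-b)/(nb)\rfloor$ and $\sum_{i=1}^{s}\lfloor i(b-a)/(na)\rfloor$ do not obviously compute a lattice count. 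The paper's proof consists precisely of showing, via Pick's theorem applied to $Q_\tau(e_+^re_-^s)=nk^2$ together with a decomposition of the region under the line of slope $b/a$ through the lattice point corresponding to $(r,s)$ into three pieces $A$, $B$, $C$, that these floor sums count (with appropriate signs) the lattice points in the two slivers $A$ and $C$, so that everything assembles into $I(e_+^re_-^s)=2P$. That computation is the substantive content of the proposition and is absent from your proposal.

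Two smaller points. First, the bijection is onto the even nonnegative integers only after restricting to \emph{null-homologous} orbit sets, i.e.\ those with $r+s\equiv 0\pmod n$ (equivalently, after identifying generators with lattice points of $V_n$); if you allow arbitrary $r,s\ge 0$ there are $n$ times too many generators and injectivity fails, and your phrase ``increasing $r$ or $s$ by one'' does not by itself compare generators lying on different levels $r+s=kn$. Second, once $I(e_+^re_-^s)=2P$ is established, your action-ordered induction and corner-rounding argument for surjectivity are more machinery than is needed: since $b/a$ is irrational, the linear functional whose level lines have slope $b/a$ takes distinct values at distinct lattice points of $V_n$, so listing those lattice points in increasing order of that value gives $P=0,1,2,\dots$ directly, which yields injectivity and surjectivity simultaneously and disposes of your ``jumps by exactly one'' concern.
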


\begin{proof}
    For concretness, lets suppose that $b>a$. Now lets consider $(x,y)$ a pair of lattice points in $V_n$. Lets call $L$ the line with slope $b/a$ that passes through $(x,y)$. Let $P$ be the number of lattice points contained in the enclosed region defined by $L$ and the axis without counting the point $(x,y)$. Futhermore, consider $s$ the horizontal distance from the $y$-axis to the point $(x,y)$, lets called the segment defined $S$. Similarly, lets call $r$ the horizontal distance from $(x,y)$ to the $(n,1)$-axis and lets call this segment $R$. Notice that it is enough to check that $I(e_+^re_-^s)=2P$.

    To check this, we need some setting, see Figure \ref{fig:counting} as a guide. Lets call $A$ the region defined by the segment of $L$ above $S$, the segment $S$ itself and the $y$-axis. By $B$ the region defined by the axis, the segment $S$ and by the segment of $L$ below $S$. Finally, lets denote by $C$ the region defined by the segment of $L$ below $R$, the segment $R$ itself and the $(n,1)$-axis. 

    We now analize carefully the Equation \ref{eq:indexorbllipse} to prove that it counts the lattice points correctly. By Pick's Theorem $nk^2=2\iota(B\cup  C)+2k+r+s$ where $\iota(B\cup A )$ is the number of interior points of $B\cup A$. The term $a=\sum_{i=1}^s\left\lfloor i \frac{b-a}{na} \right\rfloor$ counts the lattice points in $A$ without counting the lattice points in $S$. The term $-c=\sum_{i=1}^s\left\lfloor i \frac{b-a}{na} \right\rfloor$ counts (negatively) the lattice points in $C$ including the lattice points in $R$ without including the $(x,y)$ point. Summing these quantities we obtain that 
    \begin{align*}
        I(e_+^re^s_-)&=2\iota(B\cup  C)+2k+2r+2s+2a-2c\\
        &=2P +2\iota(C) +2k'+2r-2c
    \end{align*}
    Where $k'$ is the number of lattice point that lies in the $(n,1)$-axis and the region $C$. However, $c=\iota(C)+k'+r$. Which finish the proof. 
\end{proof}

	\begin{figure}
		\centering
		\tikzset{every picture/.style={line width=0.75pt}} 

\begin{tikzpicture}[x=0.75pt,y=0.75pt,yscale=-1,xscale=1]

\draw  [draw opacity=0] (10,8) -- (459.67,8) -- (459.67,292) -- (10,292) -- cycle ; \draw  [color={rgb, 255:red, 0; green, 0; blue, 0 }  ,draw opacity=0.33 ] (10,8) -- (10,292)(30,8) -- (30,292)(50,8) -- (50,292)(70,8) -- (70,292)(90,8) -- (90,292)(110,8) -- (110,292)(130,8) -- (130,292)(150,8) -- (150,292)(170,8) -- (170,292)(190,8) -- (190,292)(210,8) -- (210,292)(230,8) -- (230,292)(250,8) -- (250,292)(270,8) -- (270,292)(290,8) -- (290,292)(310,8) -- (310,292)(330,8) -- (330,292)(350,8) -- (350,292)(370,8) -- (370,292)(390,8) -- (390,292)(410,8) -- (410,292)(430,8) -- (430,292)(450,8) -- (450,292) ; \draw  [color={rgb, 255:red, 0; green, 0; blue, 0 }  ,draw opacity=0.33 ] (10,8) -- (459.67,8)(10,28) -- (459.67,28)(10,48) -- (459.67,48)(10,68) -- (459.67,68)(10,88) -- (459.67,88)(10,108) -- (459.67,108)(10,128) -- (459.67,128)(10,148) -- (459.67,148)(10,168) -- (459.67,168)(10,188) -- (459.67,188)(10,208) -- (459.67,208)(10,228) -- (459.67,228)(10,248) -- (459.67,248)(10,268) -- (459.67,268)(10,288) -- (459.67,288) ; \draw  [color={rgb, 255:red, 0; green, 0; blue, 0 }  ,draw opacity=0.33 ]  ;
\draw [color={rgb, 255:red, 8; green, 8; blue, 8 }  ,draw opacity=1 ]   (50,268) -- (50,30) ;
\draw [shift={(50,28)}, rotate = 90] [color={rgb, 255:red, 8; green, 8; blue, 8 }  ,draw opacity=1 ][line width=0.75]    (10.93,-3.29) .. controls (6.95,-1.4) and (3.31,-0.3) .. (0,0) .. controls (3.31,0.3) and (6.95,1.4) .. (10.93,3.29)   ;
\draw [color={rgb, 255:red, 0; green, 0; blue, 0 }  ,draw opacity=1 ]   (22.67,260) -- (448.18,68.82) ;
\draw [shift={(450,68)}, rotate = 155.81] [color={rgb, 255:red, 0; green, 0; blue, 0 }  ,draw opacity=1 ][line width=0.75]    (10.93,-3.29) .. controls (6.95,-1.4) and (3.31,-0.3) .. (0,0) .. controls (3.31,0.3) and (6.95,1.4) .. (10.93,3.29)   ;
\draw [color={rgb, 255:red, 0; green, 6; blue, 255 }  ,draw opacity=1 ]   (19.67,54) -- (261.67,260) ;
\draw [color={rgb, 255:red, 255; green, 0; blue, 0 }  ,draw opacity=1 ]   (270,148) -- (252,148) ;
\draw [shift={(250,148)}, rotate = 360] [color={rgb, 255:red, 255; green, 0; blue, 0 }  ,draw opacity=1 ][line width=0.75]    (10.93,-3.29) .. controls (6.95,-1.4) and (3.31,-0.3) .. (0,0) .. controls (3.31,0.3) and (6.95,1.4) .. (10.93,3.29)   ;
\draw [color={rgb, 255:red, 255; green, 0; blue, 0 }  ,draw opacity=1 ]   (70,148) -- (52,148) ;
\draw [shift={(50,148)}, rotate = 360] [color={rgb, 255:red, 255; green, 0; blue, 0 }  ,draw opacity=1 ][line width=0.75]    (10.93,-3.29) .. controls (6.95,-1.4) and (3.31,-0.3) .. (0,0) .. controls (3.31,0.3) and (6.95,1.4) .. (10.93,3.29)   ;
\draw [color={rgb, 255:red, 255; green, 0; blue, 0 }  ,draw opacity=1 ]   (90,148) -- (72,148) ;
\draw [shift={(70,148)}, rotate = 360] [color={rgb, 255:red, 255; green, 0; blue, 0 }  ,draw opacity=1 ][line width=0.75]    (10.93,-3.29) .. controls (6.95,-1.4) and (3.31,-0.3) .. (0,0) .. controls (3.31,0.3) and (6.95,1.4) .. (10.93,3.29)   ;
\draw [color={rgb, 255:red, 255; green, 0; blue, 0 }  ,draw opacity=1 ]   (110,148) -- (92,148) ;
\draw [shift={(90,148)}, rotate = 360] [color={rgb, 255:red, 255; green, 0; blue, 0 }  ,draw opacity=1 ][line width=0.75]    (10.93,-3.29) .. controls (6.95,-1.4) and (3.31,-0.3) .. (0,0) .. controls (3.31,0.3) and (6.95,1.4) .. (10.93,3.29)   ;
\draw [color={rgb, 255:red, 255; green, 0; blue, 0 }  ,draw opacity=1 ]   (130,148) -- (112,148) ;
\draw [shift={(110,148)}, rotate = 360] [color={rgb, 255:red, 255; green, 0; blue, 0 }  ,draw opacity=1 ][line width=0.75]    (10.93,-3.29) .. controls (6.95,-1.4) and (3.31,-0.3) .. (0,0) .. controls (3.31,0.3) and (6.95,1.4) .. (10.93,3.29)   ;
\draw [color={rgb, 255:red, 255; green, 0; blue, 0 }  ,draw opacity=1 ]   (150,148) -- (132,148) ;
\draw [shift={(130,148)}, rotate = 360] [color={rgb, 255:red, 255; green, 0; blue, 0 }  ,draw opacity=1 ][line width=0.75]    (10.93,-3.29) .. controls (6.95,-1.4) and (3.31,-0.3) .. (0,0) .. controls (3.31,0.3) and (6.95,1.4) .. (10.93,3.29)   ;
\draw [color={rgb, 255:red, 255; green, 0; blue, 0 }  ,draw opacity=1 ]   (170,148) -- (152,148) ;
\draw [shift={(150,148)}, rotate = 360] [color={rgb, 255:red, 255; green, 0; blue, 0 }  ,draw opacity=1 ][line width=0.75]    (10.93,-3.29) .. controls (6.95,-1.4) and (3.31,-0.3) .. (0,0) .. controls (3.31,0.3) and (6.95,1.4) .. (10.93,3.29)   ;
\draw [color={rgb, 255:red, 255; green, 0; blue, 0 }  ,draw opacity=1 ]   (190,148) -- (172,148) ;
\draw [shift={(170,148)}, rotate = 360] [color={rgb, 255:red, 255; green, 0; blue, 0 }  ,draw opacity=1 ][line width=0.75]    (10.93,-3.29) .. controls (6.95,-1.4) and (3.31,-0.3) .. (0,0) .. controls (3.31,0.3) and (6.95,1.4) .. (10.93,3.29)   ;
\draw [color={rgb, 255:red, 255; green, 0; blue, 0 }  ,draw opacity=1 ]   (210,148) -- (192,148) ;
\draw [shift={(190,148)}, rotate = 360] [color={rgb, 255:red, 255; green, 0; blue, 0 }  ,draw opacity=1 ][line width=0.75]    (10.93,-3.29) .. controls (6.95,-1.4) and (3.31,-0.3) .. (0,0) .. controls (3.31,0.3) and (6.95,1.4) .. (10.93,3.29)   ;
\draw [color={rgb, 255:red, 255; green, 0; blue, 0 }  ,draw opacity=1 ]   (230,148) -- (212,148) ;
\draw [shift={(210,148)}, rotate = 360] [color={rgb, 255:red, 255; green, 0; blue, 0 }  ,draw opacity=1 ][line width=0.75]    (10.93,-3.29) .. controls (6.95,-1.4) and (3.31,-0.3) .. (0,0) .. controls (3.31,0.3) and (6.95,1.4) .. (10.93,3.29)   ;
\draw [color={rgb, 255:red, 255; green, 0; blue, 0 }  ,draw opacity=1 ]   (250,148) -- (232,148) ;
\draw [shift={(230,148)}, rotate = 360] [color={rgb, 255:red, 255; green, 0; blue, 0 }  ,draw opacity=1 ][line width=0.75]    (10.93,-3.29) .. controls (6.95,-1.4) and (3.31,-0.3) .. (0,0) .. controls (3.31,0.3) and (6.95,1.4) .. (10.93,3.29)   ;

\draw (66,118) node [anchor=north west][inner sep=0.75pt]   [align=left] {A};
\draw (86,180) node [anchor=north west][inner sep=0.75pt]   [align=left] {B};
\draw (174,159) node [anchor=north west][inner sep=0.75pt]   [align=left] {C};
\draw (226,213) node [anchor=north west][inner sep=0.75pt]   [align=left] {L};

\end{tikzpicture}    
		\caption{Example for $L(2,1)$ of Proposition \ref{pro:bijectieven}.}
		\label{fig:counting}
	\end{figure}
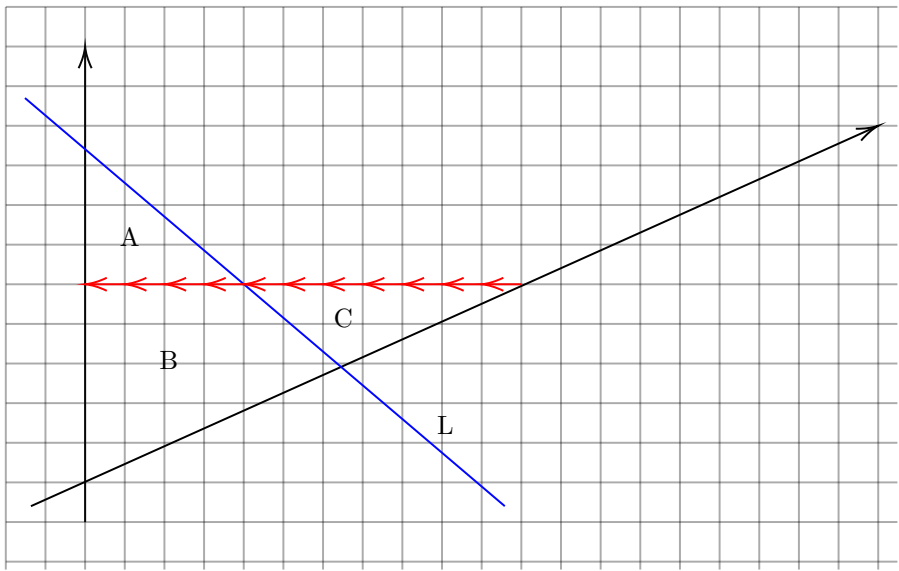

\begin{col} Suppose that $Y$ is a lens space. Then
\label{ECHlenscol}
\begin{align}
\label{ECHlens}
\text{ECH}_*(Y,\emptyset) = \left\{ 
    \begin{array}{lcc}
                \mathbb{Z}_2     &   \text{if } &  *=2k \\
                       0         &   \text{if } &  *=2k+1
             \end{array}
   \right.
\end{align}    
\end{col}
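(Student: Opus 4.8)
The plan is to compute $\text{ECH}_*(Y,\xi,0)$ from the simplest available model of this contact structure. First I would use Lemma \ref{lem:exilens} to realize $Y\cong L(n,1)$ as the boundary $\partial E_n(a,b)$ of an ellipsoid with singularities for which $b/a$ is irrational, equipped with its toric contact form $\lambda_a$, and recall Taubes' isomorphism \cite{TaubesECHSW}, so that $\text{ECH}_*(Y,\xi,0)$ does not depend on the choice of contact form and it suffices to work with $\lambda_a$. Since $H^2(Y;\mathbb{Z})$ is torsion we moreover obtain an honest absolute $\mathbb{Z}$-grading, which is what makes the answer clean; note also that $H_2(Y)=0$, so all $\text{ECH}$ indices appearing below are unambiguous.

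Next I would identify the chain complex. Because $b/a$ is irrational, the direction of $\partial^+\Omega$ is not proportional to a primitive integer vector, so $\lambda_a$ is already nondegenerate (no perturbation from Section \ref{perturbation} is needed), there are no Morse--Bott tori, and by Section \ref{sec:Reebdynamics} the only Reeb orbits are the two \emph{elliptic} exceptional orbits $e_+$ and $e_-$. Hence every orbit set is admissible, and using the identification of $H_1(L(n,1))$ from Section \ref{sec:fundamentalgroup}, under which $[e_+]=[e_-]$ generates $\mathbb{Z}/n$, the generators of $\text{ECC}_*(Y,\lambda_a,0)$ are precisely the orbit sets $e_+^{\,r}e_-^{\,s}$ with $r,s\ge 0$ and $n\mid r+s$. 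I would then feed in Proposition \ref{pro:bijectieven}: the $\text{ECH}$ index $I$ is a bijection between this set of generators and the nonnegative even integers. So $\text{ECC}_*(Y,\lambda_a,0)$ has exactly one $\mathbb{Z}_2$-generator in each even degree $*=2k\ge 0$, and none in odd degrees or in negative degrees.

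Finally I would observe that the differential lowers the $\text{ECH}$ grading by $1$ (Proposition \ref{BasedDiff} and the grading discussion following it), so it sends each $\text{ECC}_{2k}$ into the zero group $\text{ECC}_{2k-1}$ and hence vanishes identically; the same holds on each level $\text{ECC}^L_*$ of the action filtration, since these are spanned by subsets of the same generators, so passing to the direct limit \eqref{directlimit} gives $\text{ECH}_*(Y,\xi,0)=\text{ECC}_*(Y,\lambda_a,0)$, which is $\mathbb{Z}_2$ for $*=2k$ and $0$ for $*=2k+1$. I do not expect a genuine obstacle in this corollary: all the real work—that the index values are exactly the nonnegative even integers, each attained once—is already packaged in Proposition \ref{pro:bijectieven}, after which the vanishing of $\partial$ is forced purely by parity. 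The only points to check carefully are that $\lambda_a$ is truly nondegenerate with $e_+,e_-$ its only orbits, and that the homological constraint $n\mid r+s$ is consistent with the condition $r+s=kn$ that appears in the index formula for the irrational ellipsoid.
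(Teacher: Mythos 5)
Your proposal is correct and follows essentially the same route as the paper: reduce to the boundary of an irrational ellipsoid with singularities, invoke Proposition \ref{pro:bijectieven} to see that the generators sit in bijection with the nonnegative even integers via the ECH index, and conclude that the degree $-1$ differential must vanish, with topological invariance of ECH handling a general lens space. The extra details you supply (nondegeneracy of $\lambda_a$, the homological constraint $n\mid r+s$, compatibility with the action filtration and the direct limit) are consistent with, and slightly more explicit than, the paper's own argument.
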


\begin{proof}
This follows directly from the fact that the differential is an operator of index $-1$ and  Proposition \ref{pro:bijectieven} it follows that the differential is equal to zero and therefore the homology correspond to the one given in equation \ref{ECHlens}. The result follows from the topological invariance of the embedded contact homology. 
\end{proof}

Now we can prove Lemma \ref{lem:capacitiesellipsoids} where we stablish the capacities for the ellipsoids with singularity.

\begin{proof}[proof of Lemma \ref{lem:capacitiesellipsoids}]
    Lets suppose first that $a/b$ is irrational. Notice that  $\mathcal{A}(e_1^{r}e_2^{s})=ra+sb$ then it follows from Corollary \ref{ECHlenscol} that the capacities are indeed a reorganization of the numbers $k_1a+k_2b$ with the additional condition that $r+s=kn$ for some $k$. This stablish the result follows for $a/b$ irrational, by continuity of the ECH capacities we get the general case. 
\end{proof}

\subsubsection{ECH spectrum of Concave toric Lens spaces.}
\label{sec:ECHspectrum}

In the most important cases for our purpouses we can use a simpler version of the $\text{ECH}$ spectrum. We say that a closed sum of generators $\alpha_1+\cdots+\alpha_r$ is \textit{minimal} if after removing any number of summands the sum is no longer closed. 

\begin{lemm}
\label{lem:capsimp}
Let $(Y,\lambda)$ be a contact closed 3-manifold. Suppose that 
\begin{align*}
\text{ECH}_*(Y,\lambda,0) = \left\{ 
    \begin{array}{lcc}
                \mathbb{Z}_2     &   \text{if}  & *=2k \\
                       0         &   \text{if}  & *=2k+1
             \end{array}
   \right.
\end{align*}
and that the $U$ map of $\text{ECH}(Y,\lambda)$ is an isomorphism for every even index. Then
\begin{equation}
    \label{specsimp}
    \begin{split}
    c_k(Y,\lambda)=\min\{\max\{\mathcal{A}(\alpha_1),\cdots,\mathcal{A}(\alpha_r)\}: 
    I(\alpha_1)=\cdots=I(\alpha_r)=2k,  \\
    \alpha_1+\cdots+\alpha_r \text{ is minimal and non-nullhomologous}\}
    \end{split}
\end{equation}
\end{lemm}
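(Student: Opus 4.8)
The strategy is to unwind the definition of the ECH spectrum in this special setting. Recall that by definition
\[
c_k(Y,\lambda)=\inf\{L:\exists\,\eta\in\text{ECH}^L_{2k}(Y,\lambda,0)\text{ with }U^k\eta=[\emptyset]\}.
\]
Under the hypotheses, $\text{ECH}_*(Y,\lambda,0)=\mathbb{Z}_2$ in each even degree and vanishes in odd degrees, and $U$ is an isomorphism in every even degree; hence there is a unique nonzero class $\sigma_k\in\text{ECH}_{2k}(Y,\lambda,0)$ for each $k\geq 0$, with $U\sigma_{k+1}=\sigma_k$ and $\sigma_0=[\emptyset]$. Consequently the only class $\eta$ satisfying $U^k\eta=[\emptyset]$ is $\eta=\sigma_k$, so that
\[
c_k(Y,\lambda)=\inf\{L:\sigma_k\in\operatorname{im}\big(\iota^{L}\colon\text{ECH}^L_{2k}(Y,\lambda,0)\to\text{ECH}_{2k}(Y,\lambda,0)\big)\}.
\]
So the task reduces to computing the smallest $L$ for which the nonzero class $\sigma_k$ is represented by a cycle built from generators of action $<L$ (or $\le L$, which agrees in the infimum).

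\textbf{Key steps.} First I would record the reduction above carefully, noting that $\text{ECH}^L_*$ is the homology of the subcomplex $\text{ECC}^L_*$ generated by orbit sets of action less than $L$, and that $\iota^L$ is induced by inclusion; since we only care about degree $2k$ and the odd-degree groups vanish, the relevant part of the filtered complex is $\text{ECC}^L_{2k}\xrightarrow{\partial}\text{ECC}^L_{2k-1}$, and a class in $\text{ECC}^L_{2k}$ is a cycle in the filtered complex iff it is a cycle in the full complex. Second, I would observe that a sum of generators $\alpha_1+\cdots+\alpha_r$ (all in degree $2k$, i.e.\ $I(\alpha_i)=2k$) represents the nonzero class $\sigma_k$ iff it is a closed, non-nullhomologous combination; minimality is imposed without loss of generality since removing summands from a closed non-nullhomologous sum either keeps it closed (and we can shrink) or, once minimal, cannot be shrunk further, and a minimal such sum still represents $\sigma_k$ because the space of cycles modulo boundaries in degree $2k$ is one-dimensional over $\mathbb{Z}_2$ (so any closed non-nullhomologous sum is homologous to $\sigma_k$). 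Third, such a sum lies in $\text{ECC}^L_{2k}$ precisely when every $\alpha_i$ has action $<L$, i.e.\ when $\max_i\mathcal{A}(\alpha_i)<L$. Taking the infimum over $L$ and then over all admissible minimal non-nullhomologous sums yields exactly
\[
c_k(Y,\lambda)=\min\{\max\{\mathcal{A}(\alpha_1),\dots,\mathcal{A}(\alpha_r)\}:I(\alpha_i)=2k,\ \textstyle\sum\alpha_i\text{ minimal and non-nullhomologous}\},
\]
and I would check the infimum is attained (so we may write $\min$) because there are only finitely many generators below any fixed action bound.

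\textbf{Main obstacle.} The delicate point is the interchange between the filtered homology $\text{ECH}^L$ and the chain-level description: I must argue that $\sigma_k$ lifts to $\text{ECH}^L_{2k}$ exactly when it is represented by a genuine cycle of generators of action $<L$, rather than merely by a filtered cycle that bounds at a higher action level. This is where the hypothesis that the odd-degree ECH vanishes does the work: since $\text{ECC}^L_{2k+1}$ need not be understood, I instead use that any boundary $\partial\beta$ with $\beta\in\text{ECC}_{2k+1}$ automatically has action $\le\mathcal{A}(\beta)$, and that the map $\iota^L$ being the inclusion means the image of $\text{ECH}^L_{2k}$ consists exactly of classes with a representing cycle below action $L$; combined with one-dimensionality of the degree-$2k$ homology, this pins down that $\sigma_k$ is hit iff some closed combination of action-$<L$ generators is non-nullhomologous. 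I would also need to confirm that requiring $I(\alpha_i)=2k$ for every summand (rather than allowing mixed degrees that happen to sum to a degree-$2k$ chain) is the right formulation — it is, because a chain complex generator has a well-defined degree and a cycle is a sum of generators all of one degree. The rest is bookkeeping with the direct-limit description \eqref{directlimit} and the definition of the ECH spectrum.
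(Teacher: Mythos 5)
Your proposal is correct and follows essentially the same route as the paper: use the hypotheses ($U$ an isomorphism on the even-degree groups, each isomorphic to $\mathbb{Z}_2$) to reduce the spectrum to $c_k(Y,\lambda)=\inf\{L:\text{the nonzero class of degree }2k\text{ is hit by }\text{ECH}^L_{2k}\}$, and then identify that condition with the existence of a closed, non-nullhomologous sum of degree-$2k$ generators all of action below $L$. Your added care about minimality being harmless, the filtered-versus-full cycle condition, and attainment of the infimum only makes explicit what the paper's shorter argument leaves implicit.
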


\begin{proof} Since $U$ is an isomorphism and $ECH_{2k}=\mathbb{Z}_2$, the expression for the capacities simplifies as 
\begin{align*}
        c_k(Y,\lambda)=\inf\{L:\eta\in \text{ECH}_{2k}^L(Y,\lambda), \eta\not=0\}
    \end{align*}
Take $L>0$, suppose that $\eta\in \text{ECH}_{2k}^L(Y,\lambda)$ and $\eta\not=0$.

Suppose that $\eta=[\alpha_1+\cdots +\alpha_r]$ then by definition $\max\{\mathcal{A}(\alpha_1),\dots,\mathcal{A}(\alpha_r)\}<L$. It follows that 
$$\min\{\max\{\mathcal{A}(\alpha_1),\dots,\mathcal{A}(\alpha_r)\}:\eta=[\alpha_1+\cdots +\alpha_r]\}\leq c_k(Y,\lambda)$$

Take $L$ equal to the left side of the above inequality then for every $\epsilon>0$ there exist a sum of generators $\alpha_1+\cdots+\alpha_r$ such that $[\alpha_1+\cdots+\alpha_r]=\eta\in \text{ECH}^{L+\epsilon}(Y,
\lambda)$ then $c_k(Y,\lambda)\leq L$. The result follows. 
\end{proof}

By Corollary \ref{ECHlenscol} the Equation (\ref{specsimp}) holds for any lens space. Finally, we can give the definition of the $\text{ECH}$ capacities.

\begin{lemm}
\label{lem:capupperbound}
Suppose that $X_\Omega$ is a concave toric domain in $M_n$ then
    \begin{equation}
    c_k(X_\Omega)\leq \max \{l_\Omega(\Lambda):\mathcal{L}_n(\Lambda)=k\} 
    \end{equation}
    Where $\Lambda$ run over all elliptic concave generators. 
\end{lemm}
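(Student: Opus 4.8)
The plan is to bound the filtered $\text{ECH}$ of the contact boundary directly. Write $Y_a=\partial X_\Omega$, and put $M_k=\max\{l_\Omega(\Lambda):\mathcal{L}_n(\Lambda)=k\}$ with $\Lambda$ ranging over (elliptic) concave generators in $V_n$; recall $c_k(X_\Omega)=c_k(Y_a,\lambda)$ by definition. By Corollary~\ref{ECHlenscol} and the remark following the proof of Lemma~\ref{lem:capsimp}, formula~\eqref{specsimp} applies to $Y_a$; tracing its proof, this gives
\[
c_k(Y_a,\lambda)=\inf\{L'>0:\ \exists\,0\ne\eta\in\text{ECH}^{L'}_{2k}(Y_a,\lambda)\}.
\]
So it is enough to show that $\text{ECH}^{M_k+\epsilon}_{2k}(Y_a,\lambda)\ne 0$ for every $\epsilon>0$, and then let $\epsilon\to 0$.

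The only combinatorial input I need is that $j\mapsto M_j$ is non-decreasing; equivalently, every concave generator $\Lambda$ with $I(\Lambda)=2\mathcal{L}_n(\Lambda)+h(\Lambda)\le 2k+1$ satisfies $l_\Omega(\Lambda)\le M_k$ (since $I(\Lambda)\le 2k+1$ forces $\mathcal{L}_n(\Lambda)\le k$). For the monotonicity I would take a path $\Lambda_j$ realizing $M_j$ and enlarge the region it bounds by exactly one lattice point: because $l_\Omega(\Lambda)$ depends only on the directions of the edges of $\Lambda$ (Definition~\ref{def:omegalength}), translating an edge of $\Lambda_j$ outward leaves $l_\Omega$ unchanged while enlarging the enclosed region, so stopping just after the first new lattice point is enclosed yields $\Lambda_{j+1}$ with $\mathcal{L}_n(\Lambda_{j+1})=j+1$ and $l_\Omega(\Lambda_{j+1})=l_\Omega(\Lambda_j)=M_j$. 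Equivalently, $M_j\le M_{j+1}$ follows from Corollary~\ref{col:cor-red-action} (corounding a corner does not increase the $\Omega$-length) applied in reverse. Note that labels play no role in $l_\Omega$ or $\mathcal{L}_n$, so $M_k$ is the same whether the maximum is over elliptic concave generators or over all concave integral paths enclosing $k$ lattice points.

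Now fix $\epsilon>0$ and choose an $L$-flat perturbation $\lambda^{L}$ of $X_\Omega$ (Proposition~\ref{pro:equicomgeo}) with $L>M_k+\epsilon$, with $1/L<\epsilon$, and with $k_0=2k+1$. By part~(b) no generator of $\text{ECC}^{L}(Y_a,\lambda^{L},0)$ of index $\le 2k+1$ involves an exceptional orbit, and by part~(c) the index-$j$ generators ($j\le 2k+1$) correspond bijectively to concave generators of index $j$, with $\mathcal{A}(\alpha_\Lambda)<l_\Omega(\Lambda)+1/L\le M_k+\epsilon$ by the previous paragraph. Hence in degrees $2k-1,2k,2k+1$ every generator has action $<M_k+\epsilon$, so the inclusion of the filtered subcomplex is the identity in these three degrees for all thresholds $\ge M_k+\epsilon$ (the set of such generators is fixed once $k_0$ is fixed). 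As these three degrees carry the homology in degree $2k$, the maps $\text{ECH}^{M_k+\epsilon}_{2k}(Y_a,\lambda)\to\text{ECH}^{L'}_{2k}(Y_a,\lambda)$ are isomorphisms for all $L'\ge M_k+\epsilon$, so passing to the limit $\text{ECH}^{M_k+\epsilon}_{2k}(Y_a,\lambda)\cong\text{ECH}_{2k}(Y_a,\xi,0)=\mathbb{Z}_2\ne 0$ by Corollary~\ref{ECHlenscol}. This yields $c_k(Y_a,\lambda)\le M_k+\epsilon$ for all $\epsilon>0$, hence $c_k(X_\Omega)\le M_k$.

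The step I expect to be the main obstacle is the monotonicity $M_j\le M_{j+1}$: one must verify carefully that a single lattice point can always be adjoined to the region bounded by an optimal path while keeping it a concave generator contained in $V_n$ (equivalently, that Corollary~\ref{col:cor-red-action} can be run in reverse, one lattice point at a time), with particular care for paths that approach the two axes of $V_n$, where the freedom to translate an edge outward is constrained. Everything else is bookkeeping: once the monotonicity is available, the identification of $\text{ECH}^{M_k+\epsilon}_{2k}$ with $\text{ECH}_{2k}$ is immediate from Proposition~\ref{pro:equicomgeo} and the index bound $\mathcal{L}_n(\Lambda)\le k$ for concave generators of index $\le 2k+1$.
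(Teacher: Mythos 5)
Your overall architecture is sound and runs parallel to the paper's: both arguments combine Lemma \ref{lem:capsimp} (equivalently, nonvanishing of the degree-$2k$ filtered homology at the right action level), the bijection and action estimate of Proposition \ref{pro:equicomgeo}, and a combinatorial reduction showing that generators carrying hyperbolic labels cannot beat the elliptic maximum $M_k$. Where you differ is in that last reduction. The paper argues generator-by-generator: given $\Lambda$ with $I(\Lambda)=2k$ and $h\geq 2$ hyperbolic labels, it trades two \textsf{h}-labels for one application of the corner operation (which raises $\mathcal{L}_n$ by one and, by Corollary \ref{col:cor-red-action}, does not decrease $l_\Omega$), iterating until an elliptic generator of the same index and no smaller $\Omega$-length is reached. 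You instead observe that $I(\Lambda)=2\mathcal{L}_n(\Lambda)+h(\Lambda)\leq 2k+1$ forces $\mathcal{L}_n(\Lambda)\leq k$ and then invoke monotonicity of $j\mapsto M_j$. These are two packagings of the same content, and both ultimately rest on Corollary \ref{col:cor-red-action}; yours has the advantage of also handling the odd degrees $2k\pm1$ uniformly, which you need for your filtered-homology argument.

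The soft spot is exactly the one you flag, and your primary mechanism for it does not work as stated. Since $l_\Omega(\Lambda)=\sum_v v\times p_v$ is determined by the multiset of edge vectors, you cannot ``translate an edge outward leaving $l_\Omega$ unchanged'': to keep the path connected you must lengthen or insert adjacent edges, which changes the edge multiset (and hence $l_\Omega$), and the intermediate paths obtained by ``stopping just after the first new lattice point is enclosed'' need not have lattice vertices, so they are not concave integral paths at all. The correct mechanism is the one you mention only in passing: the corner operation run in the direction that enlarges the region. Concretely, let $A$ be the (convex) set of lattice points in the closed unbounded region determined by an optimal $\Lambda_j$, let $p$ be an extreme point of $\operatorname{hull}(A)$, and let $\Lambda_{j+1}$ be the boundary path of $\operatorname{hull}(A\setminus\{p\})$. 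Then Corollary \ref{col:cor-red-action} gives $l_\Omega(\Lambda_{j+1})\geq l_\Omega(\Lambda_j)$, and the count goes up by \emph{exactly} one because $\mathbb{Z}^2\cap\operatorname{hull}(A\setminus\{p\})=A\setminus\{p\}$: the new hull omits $p$ (it is extreme) and contains every other point of $A$, so precisely one lattice point changes sides. That last identity is the verification missing from your write-up; with it supplied, $M_j\leq M_{j+1}$ holds and the rest of your argument (the filtered complex at level $M_k+\epsilon$ containing all generators in degrees $2k$ and $2k+1$, hence surjecting onto $\mathrm{ECH}_{2k}=\mathbb{Z}_2$) goes through, modulo the same perturbation/limit bookkeeping the paper itself performs by letting $L\to\infty$.
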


\begin{proof}Fix $k$. Choose $L>0$ sufficiently big so that $X^{L}$ is an $L$-flat perturbation of $X_\Omega$ with respect to $k$ as guaranteed by Proposition \ref{pro:equicomgeo}. Notice that 
$$c_k(X^{L})\leq \max\{\mathcal{A}(\alpha):I(\alpha)=2k\}$$
By Lemma \ref{lem:capsimp}.  By Proposition \ref{pro:equicomgeo}(c) we have that 
$$\max\{\mathcal{A}(\alpha):I(\alpha)=2k\}\leq \max\{l_\Omega(\Lambda):I(\Lambda)=2k\}+\frac{1}{L}$$

Now we want to argue that $\max\{l_\Omega(\Lambda):I(\Lambda)=2k\}\leq \max\{l_\Omega(\Lambda):\mathcal{L}(\Lambda)=k\}$ where the right hand side runs over elliptic concave generators. Take $\Lambda$ with $I(\Lambda)=2k$ and suppose that $\Lambda$ has an hyperbolic labels. By index considerations $\Lambda$ has at least two hyperbolic labels. Now by Equation \ref{eq:echinccon} changing one label `h' by an label `e' on $\Lambda$ give us a concave generator $\Lambda'$ with $I(\Lambda')=2k-1$ with at least one label `h'. By doing a rounding the corner on $\Lambda'$ and removing the remaining `h' we obtain a generator $\Lambda''$ with $I(\Lambda'')=2k$. By Corollary  \ref{col:cor-red-action} we have that $l_\Omega(\Lambda)\leq l_\Omega(\Lambda'')$. Therefore, the maximum is contained in the set of all the elliptice concave generators $\Lambda$, as we claimed.  

Therefore, 
$$c_k(X^{L})\leq \max\{l_\Omega(\Lambda):\mathcal{L}(\Lambda)=k\}+\frac{1}{L}$$
Where $\Lambda$ runs over all elliptic concave generators. By taking $L$ going to infinity we obtain the result. 
    
\end{proof}

\begin{proof}[Proof of Theorem \ref{thm:blowupconcavecapacities}] The proof of this version of the theorem can be carried out in an analogous as the steps explained in Section \ref{sec:restofthepaper} with $l_\Omega^\delta$ instead of $l_\Omega$ and some minor changes. This is due to the fact that the term $y(\Lambda)$ in the definition of $l_\Omega^\delta$ is linear. In the definitino of the weight expansion in Section \ref{sec:com<packing} the first term $w_0$ correspond to a singular ball $B(w_0)$ after a rational blow up of size $\delta$ is done on it. 
\end{proof}


\printbibliography

\end{document}